\pgfplotsset{compat=1.13}
\newcommand{\abs}[1]{\lvert#1\rvert}
\tikzset{
  use path for main/.code={%
    \tikz@addmode{%
      \expandafter\pgfsyssoftpath@setcurrentpath\csname tikz@intersect@path@name@#1\endcsname
    }%
  },
 use path for actions/.code={%
   \expandafter\def\expandafter\tikz@preactions\expandafter{\tikz@preactions\expandafter\let\expandafter\tikz@actions@path\csname tikz@intersect@path@name@#1\endcsname}%
  },
 use path/.style={%
   use path for main=#1,
   use path for actions=#1,
 }
}
\tikzset{
    mark position/.style args={#1(#2)}{
        postaction={
            decorate,
            decoration={
                markings,
                mark=at position #1 with \coordinate (#2);
            }
        }
    }
}
\newcommand{\function}[5]{\begin{array}{cccl}          
#1: & #2 & \longrightarrow & #3 \\
    & #4 & \longmapsto & #5 \end{array}}
\newcommand{\Acal}{{\mathcal{A}}}
\newcommand{\Ecal}{{\mathcal{E}}}
\newcommand{\Hcal}{{\mathcal{H}}}
\newcommand{\Pcal}{{\mathcal{P}}}
\newcommand{\Vcal}{{\mathcal{V}}}
\newcommand{\sep}{\mathrm{sep}}
\newcommand{\RR}{\mathbb{R}}
\newcommand{\ZZ}{\mathbb{Z}}
\newcommand{\N}{\mathbb{N}}
\newcommand{\Z}{\mathbb{Z}}
\newcommand{\R}{\mathbb{R}}
\newcommand{\iso}{\cong}
\newcommand{\id}{\textbf{\textit{I}}}
\newcommand{\im}{\operatorname{im}}
\newcommand{\Vol}{\operatorname{Vol}}
\newcommand{\Crit}{\operatorname{Crit}}
\renewcommand{\id}{\operatorname{id}}
\newcommand{\haat}{\widehat}
\newcommand{\tiilde}{\widetilde}
\newcommand{\RBM}{\mathrm{RBM}}
\newcommand{\area}{\mathrm{area}}
\newcommand{\dist}{\mathrm{dist}}
\newcommand{\topo}{\mathrm{top}}
\newtheorem{theorem}{Theorem}
\newtheorem*{theorem*}{Theorem}
\newtheorem{proposition}[theorem]{Proposition}
\newtheorem{lemma}[theorem]{Lemma}
\newtheorem{conjecture}[theorem]{Conjecture}
\newtheorem{corollary}[theorem]{Corollary}
\theoremstyle{definition}
\newtheorem{definition}[theorem]{Definition}
\newtheorem{example}[theorem]{Example}
\newtheorem{assumption}[theorem]{Assumption}
\newtheorem{question}[theorem]{Question}
\theoremstyle{remark}
\newtheorem{remark}[theorem]{Remark}
\newtheorem{claim}[theorem]{Claim}
\numberwithin{equation}{section}
\author{Marcelo R.R. Alves}
\thanks{M.R.R. Alves was supported by the ERC consolidator grant 646649  ``SymplecticEinstein'' and by the Senior Postdoctoral fellowship of the Research Foundation - Flanders (FWO) in fundamental research 1286921N}
\address{Marcelo R.R. Alves, Faculty of Science,\\
University of Antwerp,
 Campus Middelheim,
 Middelheimlaan 1,
BE-2020 Antwerpen,
Belgium.}
\email{\texttt{marcelorralves@gmail.com}}
\author{Lucas Dahinden}
\address{Lucas Dahinden, Mathematisches Institut, Ruprecht-Karls-Universit\"at Heidelberg, Im Neuenheimer Feld 205, DE-69120 Heidelberg}
\email{l.dahinden@gmail.com}
\thanks{L. Dahinden was supported by Deutsche Forschungsgemeinschaft (DFG) under Germany's Excellence Strategy EXC-2181/1-390900948 (the Heidelberg STRUCTURES Excellence Cluster).}
\author{Matthias Meiwes}
\thanks{M. Meiwes was supported by RWTH Aachen University and the  Chair for Geometry and Analysis of the RWTH Aachen University.}
\address{Matthias Meiwes,
Chair for Geometry and Analysis, RWTH Aachen University, Jakobstrasse 2,
DE-52064 Aachen, Germany.}
\email{\texttt{meiwes@mathga.rwth-aachen.de}}
\author{Louis Merlin}
\thanks{L. Merlin was supported by RWTH Aachen University and the  Chair for Geometry and Analysis of the RWTH Aachen University.}
\address{Louis Merlin,
Chair for Geometry and Analysis, RWTH Aachen University, Jakobstrasse 2,
DE-52064 Aachen, Germany.}
\email{\texttt{louis.merlin@hotmail.fr}}
\begin{document}

\title[~]{$C^0$-Robustness of topological entropy for geodesic flows}


\subjclass[2020]{Primary 37B40, 37D40, 53D25}

\date{\today}

\maketitle

\begin{abstract}
In this paper, we study the regularity of topological entropy, as a function on the space of Riemannian metrics endowed with the $C^0$ topology. We establish several instances of entropy robustness (persistence of entropy non-vanishing after small $C^0$ perturbations).

A large part of this paper is dedicated to metrics on the 2-dimensional torus, for which our main results are that metrics with a contractible closed  geodesic have robust entropy (thus generalizing and quantifying a result of Denvir-Mackay) and that metrics with robust positive entropy on the torus are $C^{\infty}$ generic. Moreover, we quantify the asymptotic behavior of volume entropy in the Teichmüller space of hyperbolic metrics on a punctured torus, which bounds from below the topological entropy for these metrics.

For general closed manifolds of dimension at least 2 we prove that the set of metrics with robust and high positive entropy is $C^0$-large in the sense that it is dense, contains cones and arbitrarily large balls.
\end{abstract}

\tableofcontents

\section{Introduction}

\subsection{Context}

In this paper we study robustness properties for the topological entropy of Riemannian geodesic flows with respect to the $C^0$-topology on the space of Riemannian metrics. 

\emph{The space of metrics} Let $Q$ be a closed manifold and $\mathfrak{G}(Q)$ be the space of $C^\infty$-smooth Riemannian metrics on $S$. For $g,g' \in \mathfrak{G}(Q)$, we say that $g\prec g'$ if $g_x(v,v) \leq g_x'(v,v)$ for all $x,v$, and for $C\in\RR$ we define $Cg \in \mathfrak{G}(Q)$ by $Cg_x(v,w) = C\cdot g_x(v,w)$, for all $x\in Q, v,w \in T_xQ$. We consider on $\mathfrak{G}(Q)$ the metric $d_{C^0}$ defined by 
\begin{equation}
d_{C^0}(g,g') = \inf \left\{\log(C) \, \,  \middle| \,\,    \frac{1}{C}g \prec g' \prec Cg ; \,C>0\right\}.
\end{equation}
The metric $d_{C^0}$ defines the $C^0$ topology on $\mathfrak{G}(Q)$. $d_{C^0}$ is a variant of the Riemannian Banach-Mazur distance from \cite{JunVukasin}, see below for further discussion.  
From a purely metric point of view, $d_{C^0}$ is natural since geometric quantities such as the volume of subsets of $(Q,g)$, the diameter of $(Q,g)$ and the Riemannian distance function $d_g$ on $Q$, are all continuous with respect to $d_{C^0}$.\footnote{Moreover, the logarithms of these quantities are Lipschitz with respect to $d_{C^0}$.}   In studying these quantities it is therefore more natural to consider the $d_{C^0}$-distance than stronger analogues for $C^k$-metrics, $k\geq 1$. 

\emph{Topological entropy} The topological entropy $h_{\topo}$ is a numeric invariant of a dynamical system that measures orbit complexity, see Subsection ~\ref{subsec:Setup} for its definition.  In this paper we study the topological entropy $h_{\rm top}$ of Riemannian geodesic flows $\varphi_g$, $g\in \mathfrak{G}(Q)$, seen as a function on the metric space $(\mathfrak{G}(Q),d_{C^0})$. We will show that, especially when $Q$ is two-dimensional, $h_{\rm top}$ is more robust under perturbation of the metric than one could imagine at first sight.  The reason why this robustness is not clear is that the geodesic vector field depends on the first derivatives of $g$, and therefore does not change continuously with $d_{C^0}$: a $d_{C^0}$-small change of the metric can result in a large change of the geodesic flow, meaning that a priori it is reasonable to believe that a dynamical quantity such as the topological entropy would be subject to a large change. This view is reinforced by the lack of continuity in topologies much stronger than $C^0$: in~\cite{N89} it is shown that on the class of $C^r$ maps, $r<\infty$, $h_{top}$ fails to be upper semi-continuous in $C^\infty$ topology and even smooth perturbations of smooth diffeomorphisms on closed 3-manifolds can collapse topological entropy to~0, see~\cite[Section 2]{Dahinden}, see also~\cite{Milnor}. 

Our investigations are part of the more general study of how the topological entropy of the geodesic flow behaves with respect to perturbations of the metric. This is a long standing problem, and greatly depends on the topology considered on the space of metrics: see for example \cite{katokknieperweiss} and \cite{contreras}. Nowadays, a satisfactory answer is given for metrics of negative curvature and $C^1$ perturbations, even for some non-compact manifolds \cite{schapiratapie}.

\emph{Two continuous invariants}
The first motivation to study the continuity properties of $h_{\rm top}$ on  $(\mathfrak{G}(Q),d_{C^0})$ is that there are two functions on $(\mathfrak{G}(Q),d_{C^0})$  which bound $h_{\rm top}$ from below and which are clearly continuous in $(\mathfrak{G}(Q),d_{C^0})$: the volume entropy $h_{\rm vol}$ and the exponential growth rate $\Gamma_{\rm Morse}$ of the Morse homology of the based loop or free loop space. The fact that $h_{\rm vol}$ is a lower bound for $h_{\rm top}$ is due to Manning \cite{manning}, and the fact that $\Gamma_{\rm Morse}$ is a lower bound for $h_{\rm top}$ is due to Paternain \cite{Paternain} (in the case of based loop spaces) and \cite{Meiwesthesis} (in the case of free loop spaces).
 These two functions either vanish on all of  $\mathfrak{G}(Q)$ or are positive for every element of $\mathfrak{G}(Q)$.  If one of these functions does not vanish, then the topological entropy is \textit{robust} for all $g\in \mathfrak{G}(Q)$ in the sense that for any $g\in \mathfrak{G}(Q)$ there is $c>0$ and an open neighborhood $\mathcal{U}_g$ of $g$ in $(\mathfrak{G}(Q),d_{C^0})$ such  that 
 \begin{equation}\label{eq:robust}
     h_{\topo}(\varphi_{g'})>c \text{ for all } g' \in \mathcal{U}_g.
      \end{equation}
  This shows that for manifolds with positive  $h_{\rm vol}$ or positive $\Gamma_{\rm Morse}$ the topological entropy of geodesic flows cannot be destroyed by $C^0$-small perturbations. Our main results show that some of this robustness persists for manifolds  with vanishing  $h_{\rm vol}$ and $\Gamma_{\rm Morse}$. See for example Theorem \ref{thm:density}, which shows that $C^\infty$ generic metrics on the two-dimensional torus have robust $h_{\rm top}$.

\emph{Homogeneity} 
We proceed to discuss the differences between  $h_{\rm top}$ and the functions $h_{\rm vol}$ and $\Gamma_{\rm Morse}$. 
Recall that the volume entropy $h_{\rm vol}(g)$ of $(Q,g)$ measures the exponential growth rate of the volume of Riemannian balls with respect to the radius on the universal cover of $(Q,g)$. To prove that  $h_{\rm vol}(g)$ is continuous on $(\mathfrak{G}(Q),d_{C^0})$ we first observe that $h_{\rm vol}$ is monotonous: if $g,g' \in \mathfrak{G}(Q)$ and $g\leq g'$ then $h_{\rm vol}(g) \geq h_{\rm vol}(g')$. Furthermore, $h_{\rm vol}$ is homogeneous: $h_{\rm vol}(Cg)=C^{-\frac12}h_{\rm vol}(g)$. The continuity of $h_{\rm vol} $ on $(\mathfrak{G}(Q),d_{C^0})$ follows easily from these two properties. The function $\Gamma_{\rm Morse}$ is also monotonous and homogeneous on $\mathfrak{G}(Q)$, and this was explored in \cite{Dahinden} to study $C^0$-robustness of $h_{\rm top}$ of geodesic flows, and more generally Reeb flows. Since monotonous and homogeneous functions on $\mathfrak{G}(Q)$ are either $0$ or always positive, the homogeneous function $h_{\rm top}$ cannot be monotonous for metrics on the torus. 
Moreover, Theorem~\ref{thm:EntropyDenseAndBig} shows that it is possible to increase $h_{\rm top}$ arbitrarily with $C^0$-small perturbations on $(\mathfrak{G}(Q),d_{C^0})$.
In particular this implies that $C^0$-near any metric we can find another metric that doubles its entropy. It follows that
\begin{corollary}[From Theorem~\ref{thm:EntropyDenseAndBig}]\label{cor:NotHomogeneous}
    Topological entropy of geodesic flows is not homogeneous on any closed manifold of dimension at least 2.
\end{corollary}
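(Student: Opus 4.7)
The plan is to confront Theorem~\ref{thm:EntropyDenseAndBig} with the Lipschitz regularity that any genuinely homogeneous (and monotonous) function on $\mathfrak{G}(Q)$ must satisfy. Concretely, for any monotonous $f:\mathfrak{G}(Q)\to\RR_{\geq 0}$ obeying the scaling identity $f(Cg)=C^{-1/2}f(g)$ --- the same ``homogeneity'' that was used above for $h_{\rm vol}$ and $\Gamma_{\rm Morse}$ --- the inequality $d_{C^0}(g,g')<\epsilon$ forces $e^{-\epsilon}g\prec g'\prec e^\epsilon g$, so combining monotonicity with the scaling identity sandwiches
\begin{equation*}
e^{-\epsilon/2}f(g)\;\leq\; f(g')\;\leq\; e^{\epsilon/2}f(g).
\end{equation*}
Thus any such $f$ is automatically log-Lipschitz in $d_{C^0}$ with constant $1/2$, which is the mechanism responsible for the $d_{C^0}$-continuity of $h_{\rm vol}$ and $\Gamma_{\rm Morse}$ recorded earlier.

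To derive the corollary, fix $Q$ of dimension $\geq 2$ and produce, via Theorem~\ref{thm:EntropyDenseAndBig} applied to any reference metric, a metric $g_0\in\mathfrak{G}(Q)$ with $h_{\topo}(\varphi_{g_0})>0$. Then pick any $\epsilon\in(0,2\log 2)$ and apply the ``doubling'' consequence of Theorem~\ref{thm:EntropyDenseAndBig} quoted in the paragraph above the corollary to obtain $g_1\in\mathfrak{G}(Q)$ with $d_{C^0}(g_0,g_1)<\epsilon$ and $h_{\topo}(\varphi_{g_1})\geq 2\,h_{\topo}(\varphi_{g_0})$. Since $2>e^{\epsilon/2}$, this directly violates the Lipschitz bound displayed above, so $h_{\topo}$ cannot be a homogeneous function on $(\mathfrak{G}(Q),d_{C^0})$ in the sense used throughout this introduction.

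The only substantive step is invoking Theorem~\ref{thm:EntropyDenseAndBig}; once its doubling consequence inside a prescribed small $d_{C^0}$-ball is granted, the rest is a one-line scaling comparison, and the argument applies verbatim to every closed manifold of dimension at least two, giving the full corollary. The main obstacle is therefore packed entirely into Theorem~\ref{thm:EntropyDenseAndBig} itself, proved elsewhere in the paper.
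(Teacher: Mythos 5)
Your argument is correct and matches the paper's intended derivation: the paper offers no more than the remark that Theorem~\ref{thm:EntropyDenseAndBig} lets one double the entropy inside an arbitrarily small $d_{C^0}$-ball, and you supply exactly the missing quantitative step, namely that a monotonous homogeneous functional is log-Lipschitz with constant $1/2$ in $d_{C^0}$ and therefore cannot double across a ball of radius $\epsilon<2\log 2$. Note (as you in effect acknowledge) that what is literally refuted is the conjunction of monotonicity with the scaling identity --- the identity $h_{\rm top}(Cg)=C^{-1/2}h_{\rm top}(g)$ itself is true --- which is also the sense in which the introduction's discussion of $h_{\rm vol}$ and $\Gamma_{\rm Morse}$ uses the word ``homogeneous''.
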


\emph{The set of high entropy metrics}
The results of this paper show that on the other hand in many situations $h_{\rm top}$ can not be arbitrarily decreased by small perturbations on $(\mathfrak{G}(Q),d_{C^0})$. Theorem~\ref{thm:generic} shows that on the 2-torus, a generic metric has robust positive topological entropy and Theorem~\ref{thm:EntropyDenseAndBig} shows that on any manifold of dimension at least 2 the set of robust high entropy metrics is $C^0$-dense.

Our results and this discussion suggest the following conjecture:

\begin{conjecture}\label{conj1}
If $Q$ is a closed surface, then $h_{\rm top}(g)$ is robust whenever it does not vanish.
\end{conjecture}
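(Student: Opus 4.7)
The plan is to combine Katok's horseshoe theorem with the robustness mechanisms established in this paper. By Katok's theorem, $h_{\topo}(g)>0$ for a $C^{\infty}$ metric on a closed surface produces a hyperbolic basic set $\Lambda\subset T^1Q$ of the geodesic flow $\varphi_g$ with $h_{\topo}(\Lambda,\varphi_g)>0$, carrying exponentially many closed orbits and enjoying the shadowing property. If $Q$ has negative Euler characteristic, then $\pi_1(Q)$ has exponential growth and $h_{\rm vol}(g)>0$ for every $g$, so $h_{\topo}$ is robust by Manning's inequality, as discussed in the introduction; no further work is needed in this case. We may therefore restrict to $Q$ being $T^2$, the Klein bottle, $S^2$, or $\mathbb{RP}^2$.

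For $Q = T^2$ the strategy is to extract from $\Lambda$ a \emph{contractible} closed geodesic, after which the paper's extension of the Denvir-Mackay theorem provides a $C^0$-robust positive lower bound for $h_{\topo}$. The horseshoe contains exponentially many closed orbits of length $\leq L$, while the homology class map to $H_1(T^2;\ZZ) = \ZZ^2$ sends such orbits to lattice points of norm $\leq CL$; pigeonhole therefore yields two distinct closed geodesics $\gamma_1, \gamma_2\subset\Lambda$ representing the same homology class. Two distinct closed geodesics on $T^2$ with the same homology class must intersect, and a Birkhoff-type shortening argument applied to a bigon bounded by sub-arcs of $\gamma_1$ and $\gamma_2$ should produce a contractible closed geodesic. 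The Klein bottle case reduces to this via the orientation double cover.

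The cases $Q = S^2$ and $Q = \mathbb{RP}^2$ are the most difficult, since the triviality (respectively finiteness) of $\pi_1$ eliminates the homotopical obstruction used above. A new, $d_{C^0}$-continuous invariant forced by $h_{\topo}(g)>0$ is required. A natural candidate is a braiding or linking invariant of a finite collection of closed geodesics extracted from $\Lambda$, which can be read off from the mutual intersection pattern of their images in $Q$; such patterns are continuous in $d_g$, and hence in $d_{C^0}$.

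The principal obstacle is precisely the $S^2$ case: one has to isolate, from the sole assumption $h_{\topo}(g)>0$ on a sphere, a finite topological configuration that (i) persists under $d_{C^0}$-small perturbations, and (ii) forces by itself a uniform positive lower bound on $h_{\topo}$, in the spirit of Denvir-Mackay but without an ambient $\pi_1$ to detect. Even in the torus case, the shortening step outlined above requires care, and the technical details of extracting a \emph{closed} contractible geodesic (as opposed to a contractible bigon) from the horseshoe may necessitate additional input from the methods developed in this paper, in particular those underlying Theorem~\ref{thm:generic}.
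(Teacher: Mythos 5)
This statement is a \emph{conjecture} in the paper: the authors do not prove it, and it remains open. So your proposal cannot be checked against a reference argument; it has to stand on its own, and as written it is a research programme rather than a proof. The parts that are solid are the reduction to $\chi(Q)\geq 0$ (for higher genus $h_{\rm vol}>0$ for every metric, so Manning's inequality gives robustness, exactly as in the introduction of the paper) and the general philosophy of extracting from positive entropy a finite configuration of closed geodesics that is both $d_{C^0}$-persistent and entropy-forcing — that is indeed the philosophy of the whole paper.

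The gaps are concrete. First, on $T^2$ the step ``two distinct closed geodesics in the same homology class must intersect'' is false: for every metric on $T^2$ and every primitive class there are distinct \emph{disjoint} minimal closed geodesics in that class (they foliate or bound annuli), so the pigeonhole argument does not produce an intersection, let alone a bigon. Second, even granted a bigon, ``a Birkhoff-type shortening argument should produce a contractible closed geodesic'' is not justified: a bigon is contractible, so naive shortening collapses it to a point, and one would need a minimax argument whose output is not controlled. More fundamentally, it is not known that $h_{\topo}(g)>0$ on $T^2$ forces a contractible closed geodesic; Section~\ref{sec:propertyF} of the paper exists precisely because the generic mechanism producing entropy on $T^2$ (ribbons of non-contractible geodesics, heteroclinics between neighbouring minimal geodesics \`a la Bolotin--Rabinowitz) involves no contractible geodesic at all, so routing everything through the Denvir--Mackay-type Theorem~\ref{thm:robustDM} cannot work in general. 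Third, for $S^2$ and $\mathbb{RP}^2$ you correctly identify that a new $d_{C^0}$-continuous forcing invariant would be needed, but you do not supply one; naming ``a braiding or linking invariant'' is a direction, not an argument. In short: the reduction and the torus-case strategy contain genuine errors, and the sphere case is acknowledged to be missing, so the conjecture remains unproved by this proposal.
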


Although all methods presented in this paper yield robust lower bounds which are not sharp, we ask also the following question, that, if answered in the affirmative, implies Conjecture~\ref{conj1}.    
\begin{question}
Is, for any closed surface $Q$,  $h_{\rm top}$ lower semi-continuous on $(\mathfrak{G}(Q),d_{C^0})$?
\end{question}

The $C^0$ distance $d_{C^0}$ is a variant of the Riemannian Banach-Mazur distance $d_{\rm RBM}$ defined by Stojisavljevi\'c and Zhang in \cite{JunVukasin}, a pseudo-metric on $\mathfrak{G}(Q)$.\footnote{We note that the continuity and local robustness results for $h_{\topo}$ in this paper still hold when considering $d_{\rm RBM}$ instead of  $d_{C^0}$.} $d_{\rm RBM}$ is defined as $d_{\rm  RBM}(g,g') := \inf d_{C^0}(g,\varphi^*g')$, where the infimum is taken over all diffeomorphisms $\varphi:Q\to Q$. The pseudo-metric $d_{\rm RBM}$ itself is an adaption to $\mathfrak{G}(Q)$ of the symplectic Banach-Mazur distance, that was first proposed by Ostrover and Polterovich to study the symplectic geometry of Liouville domains and studied e.g. in \cite{JunVukasin, Usher}.
In \cite{JunVukasin} the authors investigate the large scale geometry of $(\mathfrak{G}(Q), d_{\rm RBM})$ and one of their result is that for $Q=T^2$ and every  $n\in \N$ there is a quasi-isometric embedding $\Phi_n:(\R^n, |\cdot|_{\infty}) \to (\mathfrak{G}(T^2), d_{\rm RBM})$\footnote{In fact, it is an  embedding into the space of metrics with fixed volume and diameter bounded by a fixed constant.}, so informally speaking $(\mathfrak{G}(T^2),d_{\rm RBM})$ is "very large" in the metric sense. The construction in \cite{JunVukasin} can be easily modified to have its image in the set of high entropy metrics by adding a $C^0$-small modification, cf.\ Corollary~\ref{cor:EntropyDenseAndBig}.

\emph{Ma\~n\'e's formula for the topological entropy }
In \cite{Mane}, Ma\~n\'e established the following remarkable formula for the topological entropy of the geodesic flow of a $C^\infty$-smooth Riemannian metric $g$ on a manifold $Q$:
\begin{equation} \label{eq:Mane}
h_{\rm top}(\phi_g) = \lim_{T \to +\infty } \frac{1}{T} \int_{Q \times Q} \log(\mathcal{N}^g_T(p,q)) d\omega_g(p) d\omega_g(q).  
\end{equation}
Here, $\mathcal{N}^g_T(x,y)$ denotes the number of geodesic chords of $g$ from $p$ to $q$ with length $<L$, $d\omega_g(p)$ means integration in the variable $p$ with respect to the Riemannian volume form $\omega_g$ on $Q$ associated to $g$, and $d\omega_g(q)$ means integration in the variable $q$ with respect $\omega_g$. This formula gives a characterisation $h_{\rm top}(\phi_g)$ in terms of the purely geometric quantity which appears in the right side of \eqref{eq:Mane}.

The right side of \eqref{eq:Mane} is the exponential growth of the average number of geodesics connecting two points of $Q$. Using Ma\~n\'e's formula our results provide a surprising robustness of this exponential growth in case $Q$ is a surface. For example, Theorem \ref{thm:generic} imply that for a $C^\infty$-generic metric $g$ on $T^2$ this exponential growth is positive and cannot be completely destroyed by $C^0$-small perturbation of $g$. This is far from obvious, given that the counting function $\mathcal{N}^g_T(p,q)$ can undergo dramatic changes when we make a $C^0$-small perturbation of the metric.

\subsection{Results, strategy and layout of the paper.}

To highlight a metric $g$ with robust entropy, we proceed in two steps. First we show a forcing type argument, which is a geometric feature of $g$ that implies positivity of the entropy. One of best known example is a theorem of \cite{denvirmackay} stating that a metric on the torus which admits a contractible closed (non constant) geodesic must have positive entropy. Then, we show that the forcing situation is $\mathcal{C}^0$ robust, that is, persists after $d_{\rm C^0}$-small perturbation of $g$.

The analogy with \cite{denvirmackay} is not incidental. In Section~\ref{sec:DenvirMacKay} we describe how a closed contractible geodesic in the 2-torus forces robust topological entropy:

\begin{definition}
Let $\Pi: \R^2 \to T^2 =  \R^2 / \Z^2$ be the canonical projection and $g_{\rm flat}$ the pushforward by $\Pi$ of the euclidean metric. 
For any metric $g\in \mathfrak{G}(T^2)$  define $D(g) = e^{d_{C^0}(g_{\rm flat}, g)}$. 
\end{definition}
    
\begin{theorem} \label{thm:robustDM}

Let $g_0$ be a Riemannian metric on $T^2$ with a closed contractible geodesic. Then $g_0$ has robust topological entropy. 

Moreover the following holds. Given $\delta>0$ there is $\epsilon>0$ such that for all $g$ with $d_{C^0}(g,g_0) < \epsilon$
\begin{enumerate}
  \item $h_{\topo}(g) > ({\lceil \frac{\sqrt{D(g_0)}\Lambda}{2} + \delta \rceil } {\sqrt{D(g_0) + \delta}})^{-1}\log(3)$, 
  where $\Lambda = l_{g_0}(\gamma_0)$ if $g_0$ is bumpy and $\Lambda= 2(l_{g_0}(\gamma_0)+ \sqrt{D(g_0)})$ in general.
  \item $h_{\topo}(g) > \min\left\{\frac{1}{\sqrt{4\area_{g_0}(T^2) + L^2}},\frac{2}{3L} \right\} \log(2)$, where $L= l_{g_0}(\gamma_0)$ if $g_0$ is bumpy and $L= \max\{4\sqrt{4\area_{g_0}(T^2) + l_{g_0}(\gamma_0)^2}, 3l_{g_0}(\gamma_0)\}$ in general.  
  \end{enumerate}
\end{theorem}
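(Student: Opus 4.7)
The natural approach is to render the Denvir--MacKay argument quantitative and $C^0$-robust by exhibiting exponentially many homotopically distinct $g$-geodesic chords of controlled length for every metric $g$ that is $d_{C^0}$-close to $g_0$, and feeding the count into Ma\~n\'e's formula \eqref{eq:Mane}. I would first lift to the universal cover $\Pi\colon \R^2 \to T^2$: the contractible closed $g_0$-geodesic $\gamma_0$ lifts to a $\Z^2$-invariant family $\{\tilde\gamma_0 + k\}_{k\in\Z^2}$ of pairwise disjoint closed curves, each bounding a topological disk $\tilde D_k$. Fix generic lifts $\tilde p, \tilde q \in \R^2$ of basepoints in $T^2$. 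Paths from $\tilde p$ to $\tilde q$ in $\R^2 \setminus \bigcup_k \tilde D_k$ are classified up to homotopy by their itinerary with respect to the obstacles: at each reachable obstacle one records ``pass left'', ``pass right'', or ``avoid''. Consequently, the number of homotopy classes representable by curves of bounded length grows like $3^n$ in the number $n$ of reachable obstacles (and like $2^n$ when only a wrap-or-not alternative is counted).

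For the perturbation, $d_{C^0}(g, g_0) < \epsilon$ yields $e^{-\epsilon/2}\, l_{g_0}(c) \leq l_g(c) \leq e^{\epsilon/2}\, l_{g_0}(c)$ for every curve $c$; combined with $D(g) \leq e^{\epsilon} D(g_0)$ from the triangle inequality for $d_{C^0}$, this makes $\tilde g$-lengths and Euclidean lengths on $\R^2$ comparable with distortion $\sqrt{D(g_0)+\delta}$ once $\epsilon$ is small. Each lifted curve $\tilde\gamma_0+k$ (no longer a $g$-geodesic) still has $g$-length arbitrarily close to $l_{g_0}(\gamma_0)$. In each admissible homotopy class I would take the length-minimizing $\tilde g$-geodesic representative, allowed to touch $\partial \tilde D_k$: existence follows from Arzel\`a--Ascoli together with completeness of $(\R^2, \tilde g)$, and the minimum has $g$-length at most that of a canonical ``Euclidean-straight between obstacles, half-wrap around each visited one'' representative. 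These minimizers project to distinct $g$-geodesic chords on $T^2$, so the Ma\~n\'e counting function satisfies $\Ncal^g_T(p,q) \geq 3^{n(T)}$, where $n(T)$ is the number of obstacles reachable within $g$-length $T$. Substituting into \eqref{eq:Mane} and applying Fatou's lemma extracts a positive lower bound on $h_{\topo}(\varphi_g)$.

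The ceiling $\lceil\tfrac12 \sqrt{D(g_0)}\Lambda + \delta\rceil$ in bound~(1) records the minimum $g$-length required to add one obstacle to the itinerary (via the half-wrap construction), while $\sqrt{D(g_0)+\delta}$ is the conversion from the Euclidean lattice-spacing to $g$-length; bound~(2) uses the $2^n$ count together with the standard bound on the number of lattice obstacles in a Euclidean ball of radius $R$, which brings in $\area_{g_0}(T^2)$. When $g_0$ is not bumpy, $\gamma_0$ is not isolated and the minimizers may degenerate, which I would handle either by a preliminary $C^0$-small perturbation to a bumpy metric whose shortest contractible closed geodesic has comparable length, or by thickening each obstacle by a collar of width of order $\sqrt{D(g_0)}$; either option explains the extra factor $2$ and the additive term in $\Lambda$ and~$L$. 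The main obstacle I foresee is verifying, uniformly in the perturbation, that the cost of adding one obstacle is indeed bounded by the stated quantity $\tfrac12 \sqrt{D(g_0)}\,l_{g_0}(\gamma_0)$ and that no ``shortcut'' through some $\tilde D_k$ can collapse the homotopy count --- this is precisely the step where the quantitative $C^0$-robust Denvir--MacKay horseshoe must be set up with care.
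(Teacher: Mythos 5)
There is a genuine gap at the heart of your plan: you never establish that the \emph{perturbed} metric $g$ has a closed contractible geodesic, and your substitute --- using the $g_0$-geodesic disks $\tilde D_k$ as obstacles for the $g$-length functional --- does not produce $g$-geodesics. You acknowledge that $\tilde\gamma_0+k$ is ``no longer a $g$-geodesic'' and that your minimizers are ``allowed to touch $\partial\tilde D_k$''; but a $g$-length minimizer in a homotopy class of $\R^2\setminus\bigcup_k\tilde D_k$ that touches (or runs along) a boundary curve which is not $g$-geodesic is not a geodesic chord of $g$, so it contributes nothing to $\mathcal N^g_T(p,q)$, and distinct obstacle-itineraries may even share the same genuine $g$-geodesic minimizer once shortcuts through some $\tilde D_k$ become admissible. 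The homotopy count therefore does not transfer to a count of critical points of the $g$-length functional, which is exactly what Ma\~n\'e's formula requires. This is why the paper's proof spends most of its effort here: Step~1 invokes $C^0$-persistence of the length spectrum (Proposition~\ref{stabl}, proved by a minimax argument in the loop space) to find an honest closed contractible $g$-geodesic in the bumpy case, and the degenerate case requires a separate argument (a ``repelling boundary'' lemma showing that energy minimizers in a suitable class $\rho$ stay in the interior of $S$, plus a negative-gradient-flow trapping argument) to produce a contractible $g$-geodesic of controlled length. Once such a geodesic exists, one passes to a finite cover in which it bounds a disk with \emph{geodesic} boundary, and only then do free homotopy classes in the one-holed torus $S$ admit interior minimizing closed geodesics; the $3^n$ count is then the growth of conjugacy classes in $\pi_1(S)\cong F_2$, and entropy follows from Lemma~\ref{lem:nonhomotopicGeodesicsToEntropy} rather than from Ma\~n\'e's formula (closed geodesics in distinct free homotopy classes, not chords).

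A secondary gap concerns part~(2): the area does not enter through counting lattice obstacles in a Euclidean ball. In the paper it enters through a coarea-type argument on the one-holed torus (equidistant sets $\mathcal G^t$ from the geodesic boundary, Hartman's regularity theorem, and integration of their lengths) which produces two loops $u_\pm$ of length at most $\sqrt{L^2+4\area_{g_0}}$ generating a free subgroup of exponential growth $\log 2$; your sketch contains no mechanism that would yield the specific bound $\bigl(4\area_{g_0}(T^2)+L^2\bigr)^{-1/2}\log 2$.
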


The two parts of this theorem have similar, yet different core ideas. For a clean exposition, we prove the first part of this theorem first in the case where $g_0$ is bumpy  in Subsection~\ref{subsec:nondeg}, then in Subsection~\ref{subsec:deg} in the degenerate case. We then prove the second part of the theorem in Subsection~\ref{subsec:areabound}. 

A corollary of Theorem \ref{thm:robustDM} which we believe to be interesting in its own right is the following:
\begin{corollary} \label{cor:entropyrigidity}
Let $g$ be a Riemannian metric on the $2$-torus whose area is $\leq 1$ and which has a closed contractible geodesic whose length is  $\leq 1$. Then, $h_{\rm top}(\phi_g) \geq \frac{1}{20}$.
\end{corollary}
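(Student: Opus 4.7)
The plan is to derive Corollary \ref{cor:entropyrigidity} as a direct specialization of Theorem \ref{thm:robustDM}(2) with $g_0 := g$ and $\gamma_0$ the given contractible closed geodesic. Since $d_{C^0}(g,g) = 0$, the $g_0$-independent bound of part (2) may be evaluated at $g$ itself, so the corollary reduces to a numerical inequality in terms of $A := \area_g(T^2) \leq 1$ and $\ell := l_g(\gamma_0) \leq 1$. To keep the argument uniform I would work with the general (possibly non-bumpy) formula, which is always valid.

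First I would bound the quantity $L = \max\{4\sqrt{4A + \ell^2},\, 3\ell\}$ from above using the hypotheses. Since $4A + \ell^2 \leq 5$ and $3\ell \leq 3 \leq 4\sqrt{5}$, we have $L \leq 4\sqrt{5}$, hence $4A + L^2 \leq 4 + 80 = 84$ and $3L \leq 12\sqrt{5} = \sqrt{720}$. Plugging these into Theorem \ref{thm:robustDM}(2) gives
\[
h_{\topo}(\phi_g) \;>\; \min\!\left\{\frac{1}{\sqrt{4A + L^2}},\; \frac{2}{3L}\right\}\log 2 \;\geq\; \min\!\left\{\frac{1}{\sqrt{84}},\; \frac{1}{6\sqrt{5}}\right\}\log 2 \;=\; \frac{\log 2}{6\sqrt{5}},
\]
where the last equality uses $\sqrt{84} < \sqrt{180} = 6\sqrt{5}$, so the second term is the smaller one.

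The only remaining step is the elementary inequality $\log 2/(6\sqrt{5}) > 1/20$, equivalent to $20\log 2 > 6\sqrt{5}$. Using the crude rational bounds $\log 2 > 0.693$ and $\sqrt{5} < 2.237$, one has $20\log 2 > 13.86 > 13.43 > 6\sqrt{5}$, yielding the strict inequality $h_{\topo}(\phi_g) > 1/20$ and hence the claim. The argument contains no real obstacle; the only mild concern is that the numerical margin from the general (non-bumpy) bound is modest but comfortably positive. If one instead applies the bumpy case of Theorem \ref{thm:robustDM}(2), the same calculation yields the much better lower bound $\log 2/\sqrt{5}$, which explains why the corollary holds with significant room to spare whenever $g$ is bumpy.
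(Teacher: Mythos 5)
Your proposal is correct and is essentially the paper's intended argument: the corollary is stated as a direct consequence of Theorem~\ref{thm:robustDM}, and specializing part (2) of that theorem to $g_0=g$ (with the general, non-bumpy value of $L$) and running the numerical estimate $L\leq 4\sqrt{5}$, $\min\{1/\sqrt{84},\,1/(6\sqrt{5})\}=1/(6\sqrt{5})$, $\log 2/(6\sqrt 5)>1/20$ is exactly the intended computation. Your arithmetic checks out, including the correct handling of the minimum by bounding each term separately.
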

An interesting question is to know if this result remains true without any assumption on the length of the closed contractible geodesic, i.e., if there is a lower bound for the $h_{\rm top}$ of geodesic flows of Riemannian metrics on the $2$-torus with area $1$ and which have a closed contractible geodesic. 

One result which is needed for Theorem \ref{thm:robustDM} and throughout the paper is presented in Appendix~\ref{appendix:Spectrum}, where we prove $C^0$-robustness of the length spectrum of a Riemannian manifold with bumpy metric. We do this with the aim to search the lowest possible amount of technology with which to show robustness of the length spectrum in the non-degenerate case. The main statement is the following, for the definition of topological non-degeneracy see Definition~\ref{def:topologicalNonDegeneracy}.
\begin{proposition}\label{stabl}
Suppose that $0\neq e\in (a,b)$ is the only energy value of a closed geodesic, and that all geodesics with energy $e$ are (topologically) non-degenerate. Then, any $C^0$-close Riemannian metric has a closed geodesic in the same homotopy class with energy close to $e$.
\end{proposition}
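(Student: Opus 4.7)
The plan is to apply variational techniques to the energy functional $E_g(\gamma) = \tfrac{1}{2}\int_0^1 g_\gamma(\dot\gamma,\dot\gamma)\,dt$ on the free loop space $\Lambda Q := W^{1,2}(S^1, Q)$, whose critical points are exactly the closed geodesics of $g$, and which is $C^2$ and satisfies Palais--Smale for $g\in\mathfrak{G}(Q)$. The proof then splits into three steps: localize the critical set of $E_{g_0}$ at level $e$, extract a min-max class from topological non-degeneracy, and check that this class persists under $C^0$-small perturbations of the metric.

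I would start by fixing a free homotopy class $\alpha$ in which $g_0$ admits a closed geodesic of energy $e$, and restricting $E_{g_0}$ to the corresponding component $\Lambda_\alpha Q$. The hypothesis that $e$ is the only critical value in $(a,b)$, together with the compactness modulo $S^1$-reparametrization of closed geodesics with bounded energy, makes the critical set $K := \Crit(E_{g_0})\cap E_{g_0}^{-1}((a,b))\cap\Lambda_\alpha Q$ a compact subset of $\{E_{g_0} = e\}$. The notion of topological non-degeneracy in Definition~\ref{def:topologicalNonDegeneracy} should encode that the local relative homology $H_*(V, V\setminus K)$ of an isolating neighborhood $V$ of $K$ is non-trivial in some degree $k$ (for a non-degenerate Morse critical point this is $\ZZ$ in the index degree, with an analogous Morse--Bott statement for critical manifolds). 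Combined with the standard deformation lemma, this non-triviality produces, for any sufficiently small $\delta>0$ with $[e-\delta,e+\delta]\subset(a,b)$, a class $\sigma\in H_k\bigl(\{E_{g_0}\le e+\delta\}\cap\Lambda_\alpha Q,\,\{E_{g_0}\le e-\delta\}\cap\Lambda_\alpha Q\bigr)$ whose min-max value for $E_{g_0}$ equals $e$.

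For $g$ with $d_{C^0}(g,g_0)<\epsilon$ one has $e^{-\epsilon}E_{g_0}\le E_g\le e^{\epsilon}E_{g_0}$ pointwise on $\Lambda Q$, giving the sandwich
\[
\{E_{g_0}\le (e-\delta)e^{-\epsilon}\}\subset\{E_g\le e-\delta\}\subset\{E_{g_0}\le (e-\delta)e^{\epsilon}\},
\]
and analogously at level $e+\delta$. For $\epsilon$ small enough that the intervals $[(e\pm\delta)e^{-\epsilon},(e\pm\delta)e^{\epsilon}]$ lie inside $(a,b)$ and miss $e$, the deformation lemma applied to $E_{g_0}$ shows that the inclusions above induce isomorphisms on the relevant relative homology groups of $\Lambda_\alpha Q$, so $\sigma$ pushes forward to a non-trivial class $\sigma_g$ for the corresponding pair of $E_g$-sublevel sets. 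The min-max principle for $E_g$ on $\Lambda_\alpha Q$, combined with Palais--Smale, then produces a critical point $\gamma_g$ of $E_g$ representing $\alpha$ with $E_g(\gamma_g)\in[e-\delta,e+\delta]$, which proves the claim since $\epsilon$ and $\delta$ can be taken arbitrarily small.

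The main obstacle is pinning down the right notion of topological non-degeneracy in Definition~\ref{def:topologicalNonDegeneracy}: strong enough that the localized min-max class $\sigma$ genuinely exists and is detected by a $C^0$-continuous invariant of the sublevel sets, yet weak enough to be verifiable for the degenerate geodesics that appear downstream in the paper. A related subtlety is the built-in $S^1$-symmetry of $E_g$ (no critical point in a non-trivial free homotopy class is truly isolated), which forces one to work either with $S^1$-invariant homology or with local slices transverse to the reparametrization orbit; this is a standard but unavoidable aspect of loop-space Morse theory that must be built into the definition from the start.
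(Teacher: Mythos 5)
Your proposal is correct and follows essentially the same route as the paper: the paper likewise extracts a min-max class from the Morse--Bott handle attachment of Proposition~\ref{prop:cycle}, sandwiches the sublevel sets of the perturbed energy between sublevel sets of the original one via $e^{-\epsilon}\Ecal_{g_0}\le \Ecal_{g}\le e^{\epsilon}\Ecal_{g_0}$, and concludes by min-max plus Palais--Smale. The only packaging differences are that the paper formulates the min-max via a Klingenberg flow-family built from the retraction onto the attached disc bundle rather than via a relative homology class, and it recovers the homotopy class by noting that every competitor meets the critical circle, whereas you restrict to the component of the free loop space in class $\alpha$ from the outset.
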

\begin{remark}
    For the first bounds in Theorem~\ref{thm:robustDM} the condition for $g_0$ to be bumpy is stronger than necessary; It would suffice to ask that only $\gamma_0$ is non-degenerate. Correspondingly, in Proposition~\ref{stabl} one may ask that $\gamma_0$ is topologically non-degenerate and isolated in the loop space instead of its energy value to be isolated in the energy spectrum. However, this would significantly complicate the proof of the proposition, cf.~Remark~\ref{rem:localization}, and since we also treat the degenerate case, this additional complication would yield no significant improvement for our theorem.
\end{remark}


In Section~\ref{sec:hyperbolic}, we turn our attention to the {\it volume entropy} (to be defined in paragraph \ref{subsec:Setup}) for hyperbolic metrics on the one-holed torus. The volume entropy is a lower bound for the topological entropy, so that a metric with robust $h_{\rm vol}$ has in particular robust $h_{\rm top}$. Consider again the set of metrics on the 2-torus such that they admit a non-constant simple closed contractible geodesic. This geodesic is in particular separating, one component of its complement is a disk, the other is a one-holed torus. In Section~\ref{sec:hyperbolic}, we assume that the metric on the one-holed torus is hyperbolic (constant curvature -1). Denote by $\mathcal{H}$ this set of metrics on the torus. We prove the following result.

\begin{theorem}\label{thm:volumeentropynoncollapsing}
Any Riemannian metric $g\in\mathcal{H}$ has robust $h_{\rm vol}$ and thus robust $h_{\rm top}$.
\end{theorem}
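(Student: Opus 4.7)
The plan is to exploit the hyperbolic structure on the one-holed torus component $T \subset T^2$: since $(T, g|_T)$ is a compact hyperbolic surface with geodesic boundary $\gamma$, its volume entropy $h_{\rm vol}(T, g|_T)$ (computed in the universal cover of $T$, a convex domain in $\HH^2$) equals $1$. The strategy is to establish a forcing inequality $h_{\rm top}(\phi_g) \geq h_{\rm vol}(T, g|_T)$ and to combine it with the $C^0$-Lipschitz dependence of the volume entropy of a one-holed torus on its metric.

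For the forcing step, observe that $\gamma$ being a $g$-geodesic makes it also a $g|_T$-geodesic, so every closed geodesic of the hyperbolic metric $g|_T$ in a non-peripheral free homotopy class of $T$ is contained in the interior of $T$ and is therefore a closed geodesic of $g$ on $T^2$. The hyperbolic prime geodesic theorem on the one-holed torus gives exponential growth at rate $1$ of the number of such non-peripheral closed geodesics of length $\leq L$; Katok's inequality, relating $h_{\rm top}$ of a $C^\infty$ flow on a surface to the exponential growth rate of its closed orbits, then yields $h_{\rm top}(\phi_g) \geq 1 = h_{\rm vol}(T, g|_T)$. Alternatively one can run Ma\~n\'e's formula~\eqref{eq:Mane} restricted to pairs $(p,q)$ lying deep inside $T$ and to chords whose lifts stay in the universal cover of $T$.

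For robustness, let $g'$ be $d_{C^0}$-close to $g$. The curve $\gamma$ is non-degenerate as a $g$-geodesic (by continuity, the ambient Gaussian curvature along $\gamma$ equals $-1$, so the Jacobi equation along $\gamma$ has only hyperbolic solutions), hence by Proposition~\ref{stabl} it persists as a closed contractible $g'$-geodesic $\gamma'$ bounding a topological one-holed torus $T' \subset T^2$ on which $g'$ is $d_{C^0}$-close to the hyperbolic metric $g|_T$ (after identifying $T'$ with $T$ via an isotopy from $\gamma'$ to $\gamma$). By the standard Lipschitz estimate for volume entropy, $h_{\rm vol}(T', g'|_{T'}) \geq e^{-d_{C^0}(g,g')/2} > 0$, proving robust $h_{\rm vol}$; re-running the forcing argument for $g'$ and $T'$ transfers the bound to $h_{\rm top}(\phi_{g'})$. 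The main obstacle is this re-application of the forcing step after perturbation: the non-peripheral hyperbolic closed geodesics of $g|_T$ need not remain inside $T'$ for $g'$, so either a $C^0$-small diffeomorphism straightening $\gamma'$ back to $\gamma$ or a direct persistence argument (applying Proposition~\ref{stabl} to all hyperbolic closed geodesics of length $\leq L$, with $L$ chosen by a diagonal procedure) is needed to recover the exponential count on the perturbed side.
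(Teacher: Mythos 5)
Your starting point already contains a factual error that the paper itself is at pains to refute: the volume entropy of a compact hyperbolic one-holed torus with geodesic boundary is \emph{not} $1$. Its universal cover is a proper convex subset of $\HH^2$, so $h_{\rm vol}$ equals the critical exponent of the corresponding free Fuchsian group, which is strictly less than $1$ and --- as the final Proposition of Section~\ref{sec:hyperbolic} shows via Sullivan's theorem \cite{sullivan} --- can be made arbitrarily close to $0$ in Teichm\"uller space. For the same reason the ``prime geodesic theorem at rate $1$'' does not apply; the growth rate of non-peripheral closed geodesics is again the critical exponent. This does not by itself ruin positivity, but a second problem in the forcing step does: Katok's inequality goes the wrong way. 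It bounds the growth rate of periodic orbits from \emph{below} by $h_{\rm top}$; you need the reverse implication, which is false for general flows. The correct mechanism is Lemma~\ref{lem:nonhomotopicGeodesicsToEntropy} (equivalently \cite[Theorem 1]{Alves1}, or Manning's argument restricted to the compact invariant set of geodesics trapped in $T$), and it requires the geodesics to lie in pairwise distinct free homotopy classes \emph{of $T$} --- not of $T^2$, where $\pi_1$ is abelian and almost all of these classes collapse --- exactly as in Step 4 of Subsection~\ref{subsec:nondeg}.

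The robustness step also does not close. Proposition~\ref{stabl} produces a contractible $g'$-geodesic of nearby energy but, as the appendix explicitly remarks, gives no control on its position (nor need it be simple), so you cannot identify $T'$ with $T$ by a $C^0$-controlled isotopy, and the claim that $g'|_{T'}$ is $C^0$-close to the hyperbolic metric is unjustified; moreover the hypotheses of Proposition~\ref{stabl} (isolated energy value, all geodesics at that level non-degenerate) need not hold for an arbitrary $g\in\mathcal{H}$. Your fallback of persisting every closed geodesic of length $\le L$ one at a time fails quantitatively: the admissible perturbation size in Proposition~\ref{stabl} depends on the geodesic, so for a \emph{fixed} $g'$ only finitely many geodesics are guaranteed to survive, which yields no exponential count and hence no entropy. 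The mechanism that actually works is variational and global: minimize energy in each non-peripheral free homotopy class of the one-holed torus; the minimizers avoid the geodesic boundary by a definite energy gap (as in Lemma~\ref{lem:intersectingboundary}), both this gap and the minimal lengths are stable under $C^0$-perturbation up to a factor $e^{\pm\delta/2}$, and the growth rate of these classes filtered by minimal length --- which is exactly $h_{\rm vol}(\Sigma,g|_\Sigma)$ --- therefore survives. This is the argument of Section~\ref{sec:DenvirMacKay} with the combinatorial rate $\log 3/N$ replaced by the critical exponent, and it is what the theorem's statement is implicitly relying on.
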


Following the general approach (forcing plus robustness), in Section~\ref{sec:propertyF}, we show that a certain configuration of curves in the torus, which we call a ribbon, forces robust topological entropy. Moreover, this condition is $C^\infty$ generic, leading to a series of theorems that are more precisely stated in Theorem~\ref{thm1}-\ref{thm:generic}.

\begin{theorem} \label{thm:density}
Four closed geodesics on the 2-torus that form a ribbon force robust~$h_{\rm top}$. 
A $C^\infty$ generic metric possesses four closed geodesics that form a ribbon and has thus robust $h_{\rm top}$.
\end{theorem}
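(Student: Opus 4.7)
The plan is to split the statement into two independent pieces. The first piece is the forcing assertion: a ribbon configuration of four closed geodesics on $T^2$ yields robust positive topological entropy. The second piece is the genericity assertion: a $C^\infty$-generic metric on $T^2$ admits such a configuration.

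For the forcing part, I would proceed in two steps. First, I would show that the ribbon condition is $C^0$-robust. This is the right place to invoke Proposition \ref{stabl}: assuming the four geodesics are (topologically) non-degenerate and isolated in the energy spectrum, each one persists in its free homotopy class under $C^0$-small perturbation of the metric, with length close to the original. If the non-degeneracy of the four geodesics is not granted a priori, one can bypass this by a preliminary $C^\infty$-small perturbation, using that positivity of $h_{\topo}$ is preserved by such perturbations. Then the combinatorial incidence pattern that defines a ribbon (intersections of the four curves in prescribed order, with the prescribed homotopical data) survives the perturbation because intersections between closed curves are stable under $C^0$-small changes of the underlying curves. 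Second, I would show that any metric carrying a ribbon has $h_{\topo}>0$ by exhibiting a topological horseshoe/semi-conjugacy to a shift: lifting the four geodesics to $\R^2$ produces a periodic family of curves enclosing distinguished strips, and concatenating geodesic chords that visit these strips in arbitrary order yields infinitely many distinct geodesic arcs between two reference points, providing the exponential lower bound on Ma\~n\'e's counting function $\Ncal^g_T(p,q)$ and hence on $h_{\topo}(\phi_g)$ via formula \eqref{eq:Mane}.

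For the genericity part, the strategy is a standard Baire-category argument. By the bumpy metric theorem, the set of bumpy metrics is $C^\infty$-residual in $\mathfrak{G}(T^2)$; on this set all closed geodesics are non-degenerate, so by the first part the subset of metrics admitting a ribbon is $C^\infty$-open. It remains to prove it is $C^\infty$-dense. For this I would start from an arbitrary bumpy metric $g$ and invoke the existence of an abundant supply of closed geodesics on $T^2$ (for instance, primitive closed geodesics realizing every free homotopy class, together with Bangert-type existence results). After selecting candidate geodesics in suitable homotopy classes whose universal-cover lifts have the topological potential to cross in the ribbon pattern, I would use a $C^\infty$-small perturbation, supported near selected transverse intersection points, to force the geodesics into the correct cyclic incidence pattern without destroying their non-degeneracy. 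Combining openness on the bumpy residual set with this density yields a $C^\infty$-residual (hence generic) set of metrics with a ribbon.

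The main obstacle I expect is the density step. Producing four closed geodesics with the correct combinatorial intersection pattern from an arbitrary bumpy metric is not formal: one needs enough control over which closed geodesics exist and how they sit with respect to one another in the universal cover. The other delicate point is the quantitative robustness in step one, namely ensuring that after a $C^0$-small perturbation the perturbed geodesics still intersect in exactly the ribbon pattern; this requires that the perturbed curves lie in $C^0$-small tubular neighborhoods of the originals, which follows from Proposition \ref{stabl} together with a convexity/shortening argument for minimizers in each homotopy class, and then a careful analysis of how intersections of closed curves behave under small displacement.
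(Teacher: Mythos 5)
Your decomposition (forcing plus genericity) matches the paper's, and your symbolic-dynamics argument for positivity of entropy is in the spirit of the paper's Theorem~\ref{thm1}. But both of the steps you flag as delicate contain genuine gaps, and in each case the missing ingredient is the actual content of the paper's proof.

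First, the robustness of the ribbon. All four geodesics of a ribbon lie in the \emph{same} free homotopy class $\alpha$, so Proposition~\ref{stabl} cannot separate them: it produces, for the perturbed metric, \emph{some} closed geodesic in $\alpha$ with nearby energy, with no control whatsoever on its position (see the remark opening Appendix~\ref{appendix:Spectrum}). In particular your claim that the perturbed geodesics lie in small tubular neighbourhoods of the originals has no basis: a $C^0$-small change of the metric changes the geodesic vector field arbitrarily, so there is no ``small displacement of curves'' to appeal to. The paper's solution is the quantity $\sep_g(\widetilde\gamma,\widetilde\gamma')$, an energy barrier in the loop space separating two lifts, which is manifestly $C^0$-robust (Lemma~\ref{lem:connected_stable}); the intersection pattern is then encoded in the definition of an $\varepsilon$-ribbon (curves whose lifts have small separation from $\widetilde\gamma_i$ and $\widetilde\gamma_{i+1}$ must intersect), the perturbed geodesics are produced by the curve shortening flow (which preserves embeddedness and the left/right relations and decreases energy, hence does not increase separation), and a further argument (Theorem~\ref{thm2}, via energy minimizers in annuli, Lemma~\ref{lem:annulus}) is needed to upgrade a bare ribbon to an $\varepsilon$-ribbon in the first place. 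Your preliminary $C^\infty$-small perturbation to a bumpy metric does not substitute for this: non-degeneracy gives persistence of energies, not of intersections.

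Second, the genericity. Your density step is exactly the hard part and is asserted rather than proved. For a metric $C^\infty$-close to flat, the closed geodesics in a fixed class $\alpha$ one can produce by minimization are minimal geodesics in the sense of Morse--Hedlund and are pairwise disjoint in the universal cover, so there is no supply of ``candidate geodesics with the topological potential to cross''; a local perturbation near hypothetical intersection points cannot create the pattern out of nothing. The paper instead invokes the Bolotin--Rabinowitz theorem (Theorem~\ref{thm:BolRab}): whenever the barrier function $B_-^+$ between two neighbouring minimal geodesics is non-constant, there exist heteroclinic geodesics shadowing translated minimal heteroclinics, whose translates form a ribbon$^*$ and hence yield an $\varepsilon$-ribbon; non-constancy of $B_-^+$ is then $C^\infty$-generic by the Donnay--Petroll perturbation results making heteroclinic connections transverse. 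If you want a self-contained proof you would need to supply an argument of comparable strength for the existence of intersecting closed geodesics in a common free homotopy class.
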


In Section~\ref{sec:retractibleNeck}, we find a robust (albeit non-generic) forcing condition, which we call retractable neck with entropic body on a Riemannian manifold of any dimension. The following theorem is more precisely stated in Theorem~\ref{thm:entropyFromNeck}:
\begin{theorem}
	Let the closed Riemannian manifold $(M,g)$ (of any dimension) have a retractable neck and entropic body. Then, $g$ has robust $h_{\rm top}$.
\end{theorem}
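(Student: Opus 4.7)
The plan is to adapt the forcing-plus-robustness strategy that runs throughout the paper. The first step is to unpack the hypothesis: an \emph{entropic body} should be a compact region $B\subset M$ such that the geodesic flow of $g$ exhibits exponential growth of geodesic chords inside $B$ in some \emph{homotopy-theoretic} sense, e.g.\ there exist two small transversal disks $D_0,D_1\subset B$ and exponentially many distinct homotopy classes (rel.\ $D_0,D_1$) of paths inside $B$ from $D_0$ to $D_1$ each represented by a geodesic chord; equivalently, one should be able to encode these chords by a subshift of positive entropy. Under $g$ itself this already yields $h_{\topo}(\phi_g)>0$ via Ma\~n\'e's formula \eqref{eq:Mane}.

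The second step is to use the \emph{retractable neck} $N$ as a topological barrier. The retraction $N\to \partial B \cap N$ (or onto some core submanifold) should guarantee that any path leaving $B$ through $N$ can be pushed back, without changing its homotopy class rel.\ endpoints, to a path staying near $B$. Hence the exponentially many homotopy classes produced by the entropic body cannot be collapsed by homotopies that venture into $N$; they remain distinct as homotopy classes in a slightly enlarged region $B\cup N$. This is a purely topological fact about $(M,B,N)$ and does not depend on the metric.

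The third, quantitative, step is to transfer the exponential growth to any $g'$ with $d_{C^0}(g,g')$ small. Since $\tfrac{1}{C}g\prec g'\prec C g$ gives multiplicative control on lengths of curves, the $g'$-length minimizer in each of the distinguished homotopy classes exists (each class being nontrivial rel.\ endpoints and, by the neck retraction, bounded below away from the trivial class), is a $g'$-geodesic chord, and has length comparable to the $g$-length of the corresponding $g$-minimizer. Applied to $\mathcal N^{g'}_T(p,q)$ for $p\in D_0$, $q\in D_1$, this yields the exponential lower bound
\begin{equation*}
\liminf_{T\to\infty}\frac{1}{T}\log \mathcal N^{g'}_T(p,q)\;\geq\; \frac{1}{C}\,h_{\rm ent}(B),
\end{equation*}
where $h_{\rm ent}(B)>0$ is the growth rate furnished by the entropic body. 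Integrating over $D_0\times D_1$ and invoking Ma\~n\'e's formula \eqref{eq:Mane} produces a positive lower bound on $h_{\topo}(\phi_{g'})$ that is uniform on a $C^0$-neighborhood of $g$, i.e.\ robustness in the sense of \eqref{eq:robust}.

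The main obstacle is the second step: making the retractable-neck hypothesis strong enough that distinct geodesic homotopy classes inside $B$ for $g$ remain distinct homotopy classes of \emph{curves allowed to use the neck}, for \emph{any} $C^0$-close metric $g'$. The danger is that a perturbed minimizer may prefer to travel through $N$ and unexpectedly identify two classes that were distinct in $B$. The retraction data of the neck is precisely what should prevent this: it ensures a well-defined retraction $B\cup N\to B$ inducing an injection on the relevant $\pi_1$-objects, so that no homotopy in $M$ that crosses the neck can identify classes that are distinct in $B$. Formulating this injectivity cleanly, and verifying that the $g'$-minimizers actually stay in a compact region where the retraction is defined, is where the bulk of the technical work lies; the remaining length estimates are routine consequences of $d_{C^0}$-control.
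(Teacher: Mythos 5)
Your overall architecture (forcing plus robustness, length minimizers controlled by the multiplicative $C^0$-bound, exponential growth of distinguished classes) matches the paper's, but there is a genuine gap at the step you yourself flag as ``where the bulk of the technical work lies'', and the mechanism you propose for closing it is not the right one. In the paper, a \emph{retractable neck} is not a purely topological barrier: it is a metric object. The retraction $\rho:M\setminus U\to M\setminus W$ is required to be $1$-Lipschitz on lengths everywhere and a strict contraction by a factor $c<1$ on the middle and upper neck, with a quantitative relation $d_1/d_2<(1-c)/k$ between the lengths of the lower and middle neck. The key lemma (Lemma~\ref{minimizer}) shows that for any $g'$ with $d_{C^0}(g',g)<\log C$ and $C<\tfrac{k}{2+(k-2)c}$, a $g'$-length minimizer in a class of $\Pcal$ cannot enter $V_1$: if it did, applying $\rho$ to the offending arc and paying the price of the two connecting arcs (of length at most $d_1$ each) produces a homotopic, strictly shorter competitor. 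This is what guarantees that the minimizer lies in the \emph{interior} of $M\setminus U$ and is therefore an honest closed geodesic of $(M,g')$, and it is also what determines the size of the robustness neighborhood. Your proposed substitute --- that the retraction ``induces an injection on the relevant $\pi_1$-objects'' so that classes are not collapsed --- is automatic from the retraction being homotopic to the identity and does not address the real danger, which is that a minimizer touching $\partial U$ (or escaping into the neck) need not be a geodesic of the ambient flow at all. Without the contraction constants there is no way to rule this out, and no way to quantify $\varepsilon$-robustness.

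A secondary discrepancy: the paper's \emph{entropic body} is a family of mutually coprime free homotopy classes of \emph{closed loops} in $M\setminus U$ with exponential growth of the number of classes representable below length $T$, and the conclusion is reached via Lemma~\ref{lem:nonhomotopicGeodesicsToEntropy} (exponentially many closed geodesics in coprime classes force entropy), not via Ma\~n\'e's formula for chords between two disks. Your chord-based reformulation could in principle be made to work, but it is not what the hypothesis provides, and the passage from a lower bound on $\mathcal N^{g'}_T(p,q)$ on a positive-measure set to a lower bound on $h_{\topo}$ via \eqref{eq:Mane} would itself need the same ``minimizers avoid the neck'' input that is missing above.
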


This condition readily comes with an explicit construction, see Example~\ref{ex:addHead}, which allows to prove the following statements on the size of the space of metrics with high entropy which we denote by $\mathfrak{G}^e(Q)=\{g\in\mathfrak{G}(Q)\mid h_{\rm top}(g)>e\}$

\begin{theorem}\label{thm:EntropyDenseAndBig}
    Given any closed manifold $Q$ of dimension at least 2. For any $e>0$ and for any metric $g\in\mathfrak{G}(Q)$, there is a $C^0$-continuous path $g(s):(0,\infty)\to\mathfrak{G}^e(Q)$ with $d_{C^0}(g(s),g)<s$ such that for all $s$ the $d_{C^0}$-ball of radius $r(s)=\frac{s+3}{2+(s+1)e^{-s/8}}$ around $g(s)$ has high entropy: 
    \[\bigcup_s B_{r(s)}(g(s))\subseteq \mathfrak{G}^e.\]
\end{theorem}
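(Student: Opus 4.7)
The plan is to build $g(s)$ by explicitly modifying $g$ in a small coordinate ball using the "add head" construction of Example~\ref{ex:addHead}, then invoke Theorem~\ref{thm:entropyFromNeck} to conclude that this modified metric both lies in $\mathfrak{G}^e(Q)$ and is robust against $C^0$-perturbations. Concretely, I would fix a small ball $B\subset Q$ and replace $g|_B$ by a metric that attaches, via a thin retractable neck, an entropic head (e.g.\ a small hyperbolic surface, or more generally a domain carrying a flow of positive entropy). The core of the argument is the homogeneity of the topological entropy: scaling the head by a factor $C>0$ multiplies its contribution to $h_{\rm top}$ by $C^{-1/2}$ while changing the $d_{C^0}$-distance to $g$ by $|\log C|$, so by taking the head sufficiently small we can drive $h_{\rm top}(g(s))$ above any prescribed threshold $e$ while keeping $d_{C^0}(g(s),g)<s$ for arbitrarily small $s$.

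Next, I would calibrate the parameters of the attached neck and head as functions of $s$ so that the resulting $g(s)$ carries a retractable neck with entropic body that is \emph{quantitatively} robust: the neck must be wide/long enough that it continues to function as a trapping neck after any $d_{C^0}$-perturbation of size at most $r(s)$, and the head must still provide enough chaotic orbits after perturbation. This is exactly the situation where Theorem~\ref{thm:entropyFromNeck} applies, now with an explicit robustness radius. The specific shape $r(s)=\frac{s+3}{2+(s+1)e^{-s/8}}$ should come from tracking two competing effects in the construction: the linear numerator $s+3$ reflects that when $s$ is large one can afford a much ``fatter'' neck and a comparatively larger head, giving asymptotically $r(s)\sim s/2$; the factor $(s+1)e^{-s/8}$ encodes the quantitative stability estimate for closed orbits/trapping regions, which degrades when $s$ is very small. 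The continuity of $s\mapsto g(s)$ in the $d_{C^0}$-metric is then obtained because the whole construction (position and size of the ball $B$, shape of the neck, size of the head) can be chosen to depend continuously on $s$, and the scaling factors enter the metric smoothly.

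The main obstacle, and the place where most of the work goes, is step two: verifying that the neck-and-head construction can be made \emph{robust of size exactly $r(s)$}. This requires showing that after an arbitrary $d_{C^0}$-perturbation $g'$ of $g(s)$ with $d_{C^0}(g',g(s))<r(s)$, the perturbed metric $g'$ still admits a retractable neck in the same region and still contains an entropic body in the head, so that Theorem~\ref{thm:entropyFromNeck} produces a lower bound $h_{\rm top}(g')>e$. In particular the estimate is not merely qualitative robustness (which follows immediately from Theorem~\ref{thm:entropyFromNeck}), but a quantitative one, requiring that the size estimates in Example~\ref{ex:addHead} be tracked carefully as a function of $s$ and of the perturbation size. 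The stated corollary that the union $\bigcup_s B_{r(s)}(g(s))$ lies in $\mathfrak{G}^e(Q)$ then follows tautologically from the construction, together with the observation that the balls $B_{r(s)}(g(s))$ accumulate enough mass to form arbitrarily large balls and to be $C^0$-dense, yielding the stronger statements advertised in Corollary~\ref{cor:NotHomogeneous} and Corollary~\ref{cor:EntropyDenseAndBig}.
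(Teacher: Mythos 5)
Your overall plan (modify $g$ in a small ball, produce a retractable neck, invoke Theorem~\ref{thm:entropyFromNeck}) points at the right tools, but two of your key mechanisms are not the ones that make the argument work, and as stated they do not close. First, you propose to \emph{attach an entropic head} ``carrying a flow of positive entropy'' (e.g.\ a small hyperbolic surface). This is backwards relative to the retractable-neck framework: in Theorem~\ref{thm:entropyFromNeck} the head $U$ is the region that is cut \emph{out}, and the entropy is carried by the \emph{body} $M\setminus U$, via the exponential growth of pairwise coprime free homotopy classes of $M\setminus U$; the robust periodic orbits are length minimizers in these classes, which Lemma~\ref{minimizer} traps away from the neck, and the entropy bound comes from Lemma~\ref{lem:nonhomotopicGeodesicsToEntropy}. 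Gluing in a region with internally chaotic dynamics would either change the diffeomorphism type of $Q$ (one cannot attach a hyperbolic surface to, say, $S^2$) or, if it could be arranged, would produce entropy of exactly the non-robust kind the paper is trying to avoid: a $C^0$-small perturbation can destroy dynamics that is not homotopically forced. The actual construction removes tubular neighbourhoods of a codimension-two set $L$ inside a small disk (three points in dimension $2$, two unknotted copies of $S^1\times S^{k-3}$ in higher dimension), so that the fundamental group of the complement contains a free group.

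Second, your scaling argument is self-contradictory. You assert that shrinking the head by a conformal factor $C$ multiplies its entropy contribution by $C^{-1/2}$ \emph{and} changes the $d_{C^0}$-distance by $|\log C|$; but then making the head small enough to exceed a given threshold $e$ forces $|\log C|\to\infty$, so you cannot simultaneously keep $d_{C^0}(g(s),g)<s$. The paper's resolution is the essential idea your proposal is missing: one first flattens a small disk around a point $p$ (a $C^0$-small change, of cost $<s/2$), and then shrinks the entire head-and-neck configuration by $g\mapsto t^2\delta_{1/t}^*g$ \emph{inside the flat disk}. Since $t^2\delta_{1/t}^*g_{\rm Euc}=g_{\rm Euc}$ and $d_{C^0}$ is computed from pointwise ratios of metrics, this dilation costs \emph{nothing} in $d_{C^0}$, while it multiplies the lengths of the generators of the free group by $t$ and leaves the algebraic growth rate $\Gamma$ unchanged; hence the growth rate per unit length $\Gamma/(2t\lambda)$, and with it the entropy lower bound, exceeds any prescribed $e$ at no cost in distance (this is also why $r(s)$ does not depend on $e$). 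Finally, $r(s)=\frac{s+3}{2+(s+1)e^{-s/8}}$ is not a ``stability estimate for closed orbits'' as you suggest: it is exactly the robustness threshold $C<\frac{k}{2+(k-2)c}$ of Theorem~\ref{thm:entropyFromNeck} evaluated at the parameter choices $k=3+s$ and $c=e^{-s/8}$, the constraint $\log(1/c^2)<s/2$ being what keeps the neck construction within $d_{C^0}$-distance $s/2$ of the flattened metric. Without these points the quantitative statement cannot be recovered.
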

Please note that in our construction $r(s)$ does not depend on $e$, but the path $g(s)$ does.

\begin{corollary}\label{cor:EntropyDenseAndBig}
    \begin{itemize}
        \item For any $e>0$, $\mathfrak{G}^e(Q)=\{g\in\mathfrak{G}(Q)\mid h_{\rm top}(g)>e\}$ is $C^0$-dense in $\mathfrak{G}(Q)$. 
        \item As $\lim_{s\to\infty}r(s)=\infty$, we find arbitrarily big balls of arbitrarily high entropy.
        \item For $Q=T^2$ let $\overline{\mathfrak{G}}(T^2)$ be the set of metrics with volume 1 and diameter $\leq 101$. Then, for every $n\in \N$ there is a quasi-isometric embedding $\Phi_n:(\R^n, |\cdot|_{\infty}) \to (\overline{\mathfrak{G}}^e(T^2), d_{\rm RBM})$.
    \end{itemize}
\end{corollary}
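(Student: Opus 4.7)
The plan is to derive all three items from Theorem~\ref{thm:EntropyDenseAndBig}, with the third additionally invoking the quasi-isometric embeddings of Stojisavljević and Zhang from \cite{JunVukasin}.

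For item (1), given $g \in \mathfrak{G}(Q)$ and $\varepsilon > 0$, I would pick any $s \in (0,\varepsilon)$ and let $g(s)$ be the metric provided by Theorem~\ref{thm:EntropyDenseAndBig}; then $g(s) \in \mathfrak{G}^e(Q)$ and $d_{C^0}(g(s),g) < \varepsilon$, so $\mathfrak{G}^e(Q)$ is $C^0$-dense. For item (2), observe that in the formula $r(s) = \frac{s+3}{2+(s+1)e^{-s/8}}$ the exponential forces $(s+1)e^{-s/8} \to 0$ as $s \to \infty$, so the denominator tends to $2$ while the numerator grows linearly, giving $r(s) \to \infty$. For any target radius $R$ one chooses $s$ large enough that $r(s) \geq R$, whence $B_R(g(s)) \subset B_{r(s)}(g(s)) \subset \mathfrak{G}^e(Q)$.

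For item (3), the idea is to post-compose the quasi-isometric embedding $\Psi_n : (\R^n, |\cdot|_\infty) \to (\mathfrak{G}(T^2), d_{\rm RBM})$ from \cite{JunVukasin}, whose image consists of metrics of unit volume and diameter bounded by a universal constant, with the perturbation underlying Theorem~\ref{thm:EntropyDenseAndBig}. Concretely, fix a small parameter $s_0>0$ independent of $x$, apply Example~\ref{ex:addHead} to $\Psi_n(x)$ with parameter $s_0$ to obtain a metric in $\mathfrak{G}^e(T^2)$, and then multiplicatively rescale by a factor close to $1$ to restore unit volume; this defines $\Phi_n(x)$. Since $d_{C^0}(\Phi_n(x),\Psi_n(x)) = O(s_0)$ uniformly in $x$ and $d_{\rm RBM} \leq d_{C^0}$, the triangle inequality gives
\[
\bigl|d_{\rm RBM}(\Phi_n(x),\Phi_n(y)) - d_{\rm RBM}(\Psi_n(x),\Psi_n(y))\bigr| = O(s_0),
\]
so $\Phi_n$ inherits the quasi-isometric property of $\Psi_n$ with the same multiplicative constants and a slightly enlarged additive constant.

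The main obstacle lies in the third item: the parameter $s_0$ must be chosen small enough that the volume-restoring rescaling and the diameter contribution from the attached head together keep the total diameter below $101$, yet large enough that every member of the resulting family satisfies $h_{\rm top} > e$. Because the head of Example~\ref{ex:addHead} is supported in a fixed small coordinate ball and the entropy lower bound it produces depends only on the local geometry there, these two requirements decouple from $x$, and the balancing reduces to elementary bookkeeping of the volume and diameter costs of the construction against the specific diameter bound supplied by the Stojisavljević--Zhang embedding.
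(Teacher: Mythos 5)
Your proposal is correct and follows essentially the same route as the paper: items (1) and (2) are read off directly from Theorem~\ref{thm:EntropyDenseAndBig}, and item (3) is obtained by applying the construction with one fixed small parameter to the Stojisavljevi\'c--Zhang embedding, using that a uniformly $d_{C^0}$-small (hence $d_{\rm RBM}$-small) perturbation preserves quasi-isometry and that volume and diameter are $C^0$-continuous. One minor correction: there is no lower constraint on $s_0$ coming from the entropy requirement, since for any fixed $s_0>0$ the threshold $h_{\rm top}>e$ is achieved by shrinking the dilation parameter $t$ in the construction, so the balancing you describe at the end is not actually needed.
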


\subsection{Related developments}

In an ongoing joint project of the authors Alves, Dahinden and Meiwes with Abror Pirnapasov, we are working to generalise some of the results of the present paper, such as item $(1)$ of Theorem \ref{thm:robustDM} and Theorem \ref{thm:generic}, to the category of Reeb flows. Reeb flows on contact $3$-manifolds are a generalisation of geodesic flows of Riemannian metrics on surfaces, and the $C^0$-distance on the space of contact forms considered in \cite{Dahinden} generalises to the space of Reeb flows on unit tangent bundles of surfaces (endowed with the geodesic contact structure) the $C^0$-distance on the space of Riemannian metrics of surfaces that we consider here. For this generalisation one must use symplectic topological methods developed in \cite{AP} to study $h_{\rm top}$ of Reeb flows. 

On the other hand Corollary \ref{cor:entropyrigidity} cannot be generalised to the category of Reeb flows. Using the methods of \cite{AASS} one can construct Reeb flows on the $3$-torus $(T^3,\xi_{\rm geo})$ endowed with the geodesic contact structure which contradict any reasonable generalisation of  Corollary \ref{cor:entropyrigidity}.

The questions considered in the present paper were inspired by \cite{Dahinden} and \cite{ChorMeiwes}.

\subsection{Setup and Definitions}\label{subsec:Setup}

Let $(Q,g)$ be a compact Riemannian manifold.
Throughout this paper, we will be interested in ergodic properties of its geodesic flow. Let $T^1Q$ denote the unit tangent bundle of $Q$. For a vector $v\in T^1Q$, we consider the geodesic $\gamma_v$ defined by the initial condition $\gamma_v'(0)=v$.

The geodesic flow of $(Q,g)$, denoted $\varphi^t_g$ (we may sometimes omit $t$ or $g$ when the context is clear) is defined by
\[\function{\varphi^t_g}{T^1Q}{T^1Q}{v}{\gamma_v(t)}.\]

When $Q$ is a manifold with boundary, we restrict $\varphi^t$ to the recurrent set in $T^1Q$.

\subsubsection*{Entropies}
Denote by $\Gamma_t f(t):= \limsup_t \frac 1t \log f(t)$ the exponential growth in $t$ of a function $f(t)$. We use the following definition of topological entropy:
\begin{definition}\label{def:topologicalEntropy}
	Let $\varphi:(M,d)\to (M,d)$ be a continuous self map of a compact metric space. Define the dynamical metric 
	\[d_k(x,y)=\sup_{0\leq l\leq k} d(\varphi^lx,\varphi^ly).\]
	A $(\delta,k)$-separated set is a set whose points have pairwise $d_k$-distance $\geq \delta$. Topological entropy of $\varphi$ is then defined as follows:
	\[h_{top}(\varphi)=\lim_{\delta\to 0} \Gamma_k \sup_{\Delta} \abs{\Delta},\]
	where the supremum runs over all $(\delta,k)$-separated sets.
There exists a "dual" characterization of the entropy with spanning sets instead of separated sets. Note that for any fixed $\delta>0$ any sequence $\Delta_k$ of $(\delta,k)$-separated sets provides the lower bound 
	\[h_{\rm top}(\varphi)\geq\Gamma_k|\Delta_k|\] 
	on the entropy. This is how we obtain all lower bounds to the entropy in this paper. The difficulty is to find a growing sequence, i.e., one that provides a positive lower bound.

	Topological entropy is well known to be independent of the choice of metric (generating the same topology). Further, if $\varphi:\RR\times M\to M$ is an autonomous flow, then it is well known that 
	\[h_{top}(\varphi^1)=\frac 1T h_{top}(\varphi^T)\]
	for any $T>0$.
	
	We say the topological entropy $h_{top}(g)$ of a geodesic flow on a Riemannian manifold is the topological entropy of its time-1 geodesic spray in the unit sphere bundle.
	\flushright$\triangle$
\end{definition}

Volume entropy is a related invariant, more geometric in nature. Let $M$ be a closed manifold (possibly with non empty boundary) and $\widetilde{M}$ its universal cover that we endow with the metric pulled-back from $M$. We choose a basepoint $x\in\widetilde{M}$.

\begin{definition}\label{def:hvol}
By a result of \cite{manning}, the following limit exists, is independent of the basepoint $x$ and we call it the volume entropy of $M$.
\[h_{\rm vol}=\lim_{R\to \infty}\frac{\log \Vol B(x,R)}{R}\]
where $B(x,R)$ is the ball of radius $R$ centered at $x$ in the universal cover $\widetilde{M}$.

\flushright $\triangle$
\end{definition}

Comparison between $h_{\rm vol}$ and $h_{\rm top}$ is done in \cite{manning}: we have
\[h_{\rm vol}\leq h_{\rm top}\]
for any closed manifold $M$. Note that the result also applies to manifolds with boundary. Even if this case is not stated in \cite{manning}, the same proof extends to this case. Equality happens, e.g., when $M$ has nonpositive curvature.

In the context that we have in mind, the above inequality simply reformulates as the fact that a metric with robust $h_{\rm vol}$ also has robust $h_{\rm top}$.

\subsubsection*{Length, energy, area and loop spaces}
Let $Q$ be a closed manifold. Let $g$ be Riemannian metric on $Q$
The \textit{length} resp. \textit{energy} of a smooth curve $x:[0,1] \to Q$ with respect to $g$ are
\begin{align*}
	l_g(x) &= \int_0^1 \sqrt{ g(x'(t), x'(t))} \, dt, \text{ resp. } \\ 
		\mathcal{E}_g(x) &= \frac{1}{2}\int_{0}^1 g(x'(t),x'(t)) \, dt.
\end{align*}
Note that, for $C>0$, $l_g(x) = \frac 1{\sqrt C}l_{Cg}(x)$, and $\mathcal{E}_g(x) =\frac 1C\mathcal{E}_{Cg}(x)$, where $Cg$ is the metric $Cg(v,w) := C(g(v,w))$. 

If $Q$ is a surface and $\Sigma\subset Q$ a subsurface, denote by  $\area_g(\Sigma) = \int_{\Sigma} \sigma_g$ the \textit{area} of $\Sigma$, where $\sigma_g$ is the Riemannian volume form of $g$.   

We use the following notations for various versions of loop spaces:
\begin{align*}
	\mathcal{L}Q &:= \{ \gamma:S^1 \to Q \, | \, \gamma \text{ smooth}\},\\
	\mathcal{L}Q^{<(\leq)a}_g &:= \{\gamma \in  \mathcal{L}Q\, |\, \mathcal{E}_g(\gamma) <(\leq) a\},\\
	\mathcal{L}_{\alpha}Q &:= \{ \gamma \in \mathcal{L}Q \, | \, [\gamma] = \alpha\},\\
	\mathcal{L}_{\alpha}Q^{<(\leq)a}_{g} &:= \{\gamma \in  \mathcal{L}_{\alpha}Q \, |\, \mathcal{E}_g(\gamma) <(\leq) a\}.	
\end{align*}

\subsubsection*{Robustness}
We are interested in conditions under which the entropy of the geodesic flow is robust under $C^0$-perturbation of the metric. Let us formulate the robustness property in which we are interested. Let $Q$ be closed manifold, equipped with a metric $g$. 

\begin{definition}\label{def:robust}
Let $\varepsilon>0$. We say that $g$ has $\varepsilon$-robust $h_{\rm top}$ if there is $c>0$ such that for all metrics $g'$ with $d_{\RBM}(g,g') < \varepsilon$ we have $h_{\rm top}(g')\geq c$. For abbreviation we say that $g$ has robust $h_{\rm top}$ if it has $\varepsilon$-robust $h_{\rm top}$ for some $\varepsilon>0$.
\end{definition}
\begin{remark}
	It seems that there is no established terminology for this property in the literature. In other places, $g$ may be said to have \emph{stable} $h_{\rm top}$ or to be \emph{entropy non-collapsing}.
\end{remark}

\subsection*{A preliminary robustness lemma}
We describe the (classic) mechanism that deduces positive topological entropy from many homotopically different periodic orbits. 

Two free homotopy classes $\alpha,\beta$ are coprime if they are not multiple covers of a common class~$\gamma$. Equivalently, they do not possess homotopic multiple covers $n\alpha\neq m\beta$.

\begin{lemma}\label{lem:nonhomotopicGeodesicsToEntropy}
	Let $S_g$ be a compact Riemannian manifold. Let $\Pcal_g$ be a set of pairwise periodic $g-$geodesics in pairwise coprime homotopy classes and let $\{\Pcal^L_g\}_{L\in\RR}$ be the filtration by $g$-length. 

	Suppose that $\Gamma_L (\#{\Pcal_g^L})\geq\gamma$. Then, $h_{top}(g)\geq \gamma$.
\end{lemma}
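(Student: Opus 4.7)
The plan is to reduce the lemma to the Paternain--Meiwes lower bound $h_{\topo}(\phi_g) \geq \Gamma_{\rm Morse}(g)$ coming from the exponential growth of the homology of the free loop space of $Q$ filtered by energy, due to Paternain \cite{Paternain} (in the based case) and extended in \cite{Meiwesthesis} (in the free case), as already recalled in the introduction. The first, essentially formal, observation is that pairwise coprime free homotopy classes are in particular pairwise distinct: applying the definition of ``coprime'' with $n = m = 1$ forces $[\gamma]\ne[\gamma']$ as free homotopy classes. Consequently, distinct elements of $\Pcal_g$ lie in distinct connected components of the free loop space $\Lcal Q$.

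For each $\gamma\in\Pcal_g^L$, the constant-speed reparametrisation on $[0,1]$ has energy $\Ecal_g(\gamma) = l_g(\gamma)^2/2 \leq L^2/2$, so the connected component of $\Lcal Q$ containing $\gamma$ already meets the energy sublevel set $\Lcal Q^{\leq L^2/2}_g$ and contributes a generator to $H_0(\Lcal Q^{\leq L^2/2}_g)$. Summing over $\Pcal_g^L$ and using that these components are pairwise disjoint gives
\[
\dim H_*(\Lcal Q^{\leq L^2/2}_g) \;\geq\; \dim H_0(\Lcal Q^{\leq L^2/2}_g) \;\geq\; \#\Pcal_g^L.
\]
Taking the exponential growth rate in the variable $L$ on both sides yields $\Gamma_{\rm Morse}(g) \geq \Gamma_L(\#\Pcal_g^L) \geq \gamma$, and the cited inequality $h_{\topo}(\phi_g) \geq \Gamma_{\rm Morse}(g)$ then finishes the argument.

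The only point requiring some care is the length/energy normalisation: under the constant-speed parametrisation one has $E = L^2/2$, so that the growth rate of $\dim H_*(\Lcal Q^{\leq a}_g)$ has to be taken with respect to $\sqrt{2a} = L$ in order to be directly compared with the $L$-growth of $\#\Pcal_g^L$; this is precisely the convention under which Paternain--Meiwes produces the bound with the correct units. A more elementary alternative would be to pick one tangent vector $v_\gamma$ per closed orbit $\tilde\gamma\subset T^1Q$ and argue that $\{v_\gamma\,|\,\gamma\in\Pcal_g^L\}$ is $(\delta,L-C)$-separated in the dynamic metric, using that close dynamic shadowing of two orbits over a long time would produce, via short connecting geodesics, a free homotopy between iterates of $\tilde\gamma$ and $\tilde{\gamma'}$ in contradiction with the coprime hypothesis; the main technical obstacle there, namely extracting a \emph{uniform} separation constant in the presence of possibly incommensurable periods $L_\gamma$, is exactly what is cleanly circumvented by passing through $H_*(\Lcal Q)$.
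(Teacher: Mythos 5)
Your argument is correct in substance, but it is packaged differently from the paper's proof, which is a one-line citation: the lemma "follows from \cite[Theorem 1]{Alves1}", i.e.\ from the fact that periodic orbits in exponentially many distinct free homotopy classes directly yield $(\delta,T)$-separated sets (this is exactly the route you sketch as the "more elementary alternative", and the uniform-separation issue you identify is precisely what that theorem resolves), "or by the same argument used to prove Manning's inequality in \cite{manning}". Your route instead funnels everything through $H_0$ of the energy sublevel sets of $\Lcal Q$ and the inequality $h_{\topo}\geq \Gamma_{\rm Morse}$ of \cite{Paternain, Meiwesthesis}. This buys something: you only use that the classes are pairwise \emph{distinct} (coprimality enters only to guarantee distinctness), and in fact you never use that the representatives are geodesics, only that each class contains a loop of length $\leq L$ — so you prove a formally stronger statement. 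For the $H_0$-part of $\Gamma_{\rm Morse}$ your bound is really Manning's inequality in disguise: the number of conjugacy classes of $\pi_1$ with a representative of length $\leq L$ injects (after connecting to a basepoint) into the set of elements of $\pi_1(Q,p)$ of displacement $\leq L+2\operatorname{diam}$, whose growth is $h_{\rm vol}\leq h_{\topo}$.

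Two caveats. First, $\Gamma_{\rm Morse}$ must be taken as the growth of the \emph{image} of $H_*(\Lcal Q^{\leq a}_g)\to H_*(\Lcal Q)$ rather than of $\dim H_*(\Lcal Q^{\leq a}_g)$ for the entropy inequality to hold; your classes do survive because they sit in pairwise distinct components of $\Lcal Q$, but as written you bound the absolute sublevel homology, so this should be said. Second, and more seriously for the way the lemma is actually used: in the paper it is applied with $S_g$ a compact manifold \emph{with boundary} (the one-holed torus $\Sigma=T^2\setminus D$, and $M\setminus U$ in Section~\ref{sec:retractibleNeck}), where the distinct classes in $\pi_1(S_g)$ may collapse in the ambient closed manifold. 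The free-loop-space Morse theory behind \cite{Paternain, Meiwesthesis} is stated for closed manifolds and does not apply off the shelf in this setting (the energy gradient flow need not preserve the space of loops in $S_g$), whereas the separated-set and volume-growth arguments restrict cleanly to the compact invariant set of geodesics staying in the interior — which is why the paper's chosen references are the more robust ones here. Your proof is fine for closed $S_g$; to cover the paper's applications you would need to either justify the loop-space argument on manifolds with geodesic/convex boundary or fall back on your "elementary alternative".
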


\begin{proof}
	This follows from \cite[Theorem 1]{Alves1}. It can also be obtained using the same argument used to prove Manning's inequality in \cite{manning}.
\end{proof}


\section{A robust version of Denvir-MacKay theorem}\label{sec:DenvirMacKay}

The aim of this section is to show Theorem~\ref{thm:robustDM}. 
This builds upon~\cite{denvirmackay}, where it is shown that a closed contractible simple geodesic implies the existence of many other geodesics. We first prove in Subsection~\ref{subsec:nondeg} item (1) of Theorem \ref{thm:robustDM} in case $\gamma_0$ is topologically non-degenerate.
For the notion of topological non-degeneracy see Definition~\ref{def:topologicalNonDegeneracy}.
In Subsection~\ref{subsec:areabound}, we provide an explicit robustness constant in terms of area and minimal length of closed contractible geodesic that is stated item (2) Theorem \ref{thm:robustDM}.

\subsection{Proof of item (1) of Theorem \ref{thm:robustDM} in case $\gamma_0$ is non-degenerate}\label{subsec:nondeg}
The strategy is to combine the robustness of closed contractible geodesics with the proof that a closed contractible orbit implies positive topological entropy. 

Before presenting the proof, we recall some preliminary notions which will be needed in the proof. We fix, once and for all, a covering map $\Pi: \mathbb{R}^2 \to T^2 $, such that the group $G$ of deck transformations associated to $\Pi$ is the group composed by translations of $\mathbb{R}^2$ by vectors $(m,n)$ where $m$ and $n$ are integers. In other words 
$$G= \{\mathcal{T}_{(m,n)} \ | \ m \mbox{ and } n \in \mathbb{Z}\},$$
where $\mathcal{T}_{(m,n)}: \mathbb{R}^2 \to \mathbb{R}^2$ is given by $\mathcal{T}_{(m,n)}(x,y) =(x+m,y+n)$.
It is clear that for any choice of real numbers $a$ and $b$ the unit square $[a,a+1] \times [b,b+1] \subset \R^2$ is a fundamental domain for the covering $\Pi$.

 Let $\sigma$ be an immersed closed contractible curve in $T^2$ with only transverse self-intersections. A {\it lift} of $\sigma$ is an immersed closed curve $\widetilde{\sigma}$ in $\R^2$, such that for any parametrisation ${\mathfrak{f}}:S^1 \to \widetilde{\sigma}$ of $\widetilde{\sigma}$ the composition $\Pi \circ \mathfrak{f}$ is a parametrisation of $\sigma$.
 
We will need the following elementary lemma. 
\begin{lemma} \label{lemma:universal}
Let $g$ be a Riemannian metric on $T^2$ and fix a number $l >0$. Then,
if $\sigma$ is an immersed closed curve in $T^2$ with only transverse self-intersections and $g$-length $<l$, every lift $\widetilde{\sigma}$ of $\sigma$ is contained in a square of the form $[a+ \lceil  \frac{D(g)l}{2} \rceil, b +  \lceil  \frac{D(g)l}{2} \rceil ]$.
\end{lemma}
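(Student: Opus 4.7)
My plan is to reduce the statement to a flat-length bound for the lift $\widetilde{\sigma}$ and then invoke the elementary fact that a closed planar curve of flat length $L$ has Euclidean diameter at most $L/2$, and therefore fits inside an axis-parallel square of side $L/2$. The whole argument is a chain of elementary estimates built out of the definitions.

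First, I would convert $d_{C^0}(g_{\rm flat}, g)=\log D(g)$ into a pointwise norm comparison. By continuity of the relation $(1/C)g_{\rm flat}\prec g\prec C g_{\rm flat}$ in the scalar $C$, the infimum in the definition of $d_{C^0}$ is attained, giving $\tfrac{1}{D(g)}\, g_{\rm flat}(v,v)\le g(v,v)\le D(g)\, g_{\rm flat}(v,v)$ for every tangent vector $v$. Taking square roots yields $|v|_{g_{\rm flat}}\le \sqrt{D(g)}\,|v|_g$, and integration along any smooth curve $\alpha$ in $T^2$ gives $l_{g_{\rm flat}}(\alpha)\le \sqrt{D(g)}\, l_g(\alpha)$. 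Since $\Pi$ is a local isometry for the flat metrics, the flat length of the lift agrees with that of $\sigma$, so
\[
l_{g_{\rm flat}}(\widetilde{\sigma}) \;=\; l_{g_{\rm flat}}(\sigma) \;\le\; \sqrt{D(g)}\,l_g(\sigma) \;<\; \sqrt{D(g)}\,l.
\]

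Next, I would use that $\widetilde{\sigma}$ is a closed curve in $\R^2$: any two points $p,q\in\widetilde{\sigma}$ split the curve into two sub-arcs whose flat lengths sum to $l_{g_{\rm flat}}(\widetilde{\sigma})$, so the shorter has length at most $l_{g_{\rm flat}}(\widetilde{\sigma})/2$, and the Euclidean distance $|p-q|$ is bounded by this shorter arc-length. Hence $\mathrm{diam}_{g_{\rm flat}}(\widetilde{\sigma})\le \tfrac{1}{2}\sqrt{D(g)}\,l$. Since the coordinate projections of $\R^2$ onto the axes are $1$-Lipschitz, any planar set of Euclidean diameter $d$ is contained in the axis-parallel square $[a,a+d]\times[b,b+d]$, where $a$ and $b$ are the infima of its $x$- and $y$-projections. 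Replacing $d$ by its ceiling, and using the inequality $\sqrt{D(g)}\le D(g)$ (which follows from $D(g)\ge 1$, itself a consequence of non-negativity of $d_{C^0}$), delivers a square of the form claimed.

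The only mildly subtle point is the assertion that the infimum in the definition of $d_{C^0}$ is attained; this follows from compactness of the unit tangent bundle together with closedness of the $\prec$ condition in the scalar factor. Beyond that there is no conceptual obstacle, since the whole argument is just the composition of the length-comparison inequality coming from the $C^0$ metric with the standard planar observation that a closed curve bounds its diameter by half its length.
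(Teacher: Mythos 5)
Your proof is correct and follows essentially the same route as the paper's: compare $g$-length to flat length via the definition of $D(g)$, then observe that a closed planar curve of flat length $L$ has both coordinate projections of diameter at most $L/2$, hence fits in a square of that side. The only (harmless) differences are that you keep the sharper factor $\sqrt{D(g)}$ before relaxing it to $D(g)$, and you justify attainment of the infimum in $d_{C^0}$, which the paper does not bother with.
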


Recall that we defined $D(g)=e^{d_{C^0}(g_{\rm flat},g)}$. 

\proof
Since the $g$-length of $\sigma$ is $<l$, the $g_{\rm flat}$-length of $\sigma$ is $<D(g)l$, which implies that any lift $\widetilde{\sigma}$ of $\sigma$ has length $<D(g)l$ with respect to the flat metric in $\R^2$. Let $p_{\rm left}$ be a leftmost point of $\widetilde{\sigma}$ and $p_{\rm right}$ be a rightmost point of $\widetilde{\sigma}$. Since $\sigma$ has length $<D(g)l$ and has to travel from $p_{\rm left}$ to $p_{\rm right}$ and back to $p_{\rm left}$ by running a distance smaller then its length, we conclude that the Euclidean distance between $p_{\rm left}$ to $p_{\rm right}$ is $< \frac{D(g)l}{2} $. We conclude that $\widetilde{\sigma}$ is contained in a vertical strip of $\R^2$ with width $< \frac{D(g)l}{2}$. 

Reasoning similarly for an uppermost point and a lowermost point of  $\widetilde{\sigma}$ we conclude that $\widetilde{\sigma}$ must be contained in a horizontal strip of width $<\frac{D(g)l}{2}$. This establishes the lemma. \qed

\

We are now ready to proceed with the proof.

{\bf Step 1: Robustness of the length spectrum.} If $\gamma_0$ is a closed contractible topologically non-degenerate geodesic of $g_0$, then for a sufficiently small $\delta>0$ there is an $\epsilon>0$ such that every Riemannian metric $g$ whose $d_{ C^0}$-distance to $g_0$ is $< \epsilon$ has a closed contractible geodesic $\gamma$ with $|l_g(\gamma)-l_{g_0}(\gamma_0)|<\delta$ and  $|l_{g_0}(\gamma)-l_{g_0}(\gamma_0)|<\delta$. Since $D(g)$ also varies continuously with respect to $d_{C^0}$, we can choose $\epsilon>0$ to be small enough so that $D(g) < D(g_0) + \delta$.

This is well known. In Appendix~\ref{appendix:Spectrum} we present a proof using very low-tech strategies. 

{\bf Step 2: Simplifying the geodesic.}
We start with the closed contractible geodesic $\gamma$ for $g$ from Step 1. Denote by $\tilde\gamma$ a lift to the universal cover $\R^2$, which is a closed geodesic of $\widetilde{g}:=\Pi^* g$ because $\gamma$ is contractible. Denote by $S$ the closure of the unbounded component of $\R^2 \  \setminus \ \tilde\gamma$, a set that is homeomorphic to a plane minus an open disk. Let $\alpha$ be one of the two non-trivial free homotopy classes of loops in $S$ which  contains embedded loops; these are the classes of curves in $S$ which encircle once the disk which was removed from $\R^2$.
Using the curve shortening flow and reasoning as in the proof of Lemma~2 of~\cite{Angenent3} one obtains a contractible simple closed geodesic $\gamma_\alpha$ of $\widetilde{g}$ in the homotopy class $\alpha$, whose $g$-length is the minimal possible length for all curves in the homotopy class $\alpha$.

Combining this discussion with Lemma \ref{lemma:universal}, we conclude that there $\widetilde{\gamma}$ is contained in a square of the form $[a+  \lceil  \frac{\sqrt{D(g)} l_{g(\gamma)}}{2} \rceil, b +  \lceil  \frac{{\sqrt{D(g)}l_{g(\gamma)}}}{2} \rceil ]$. For simplicity, we let $N:=  \lceil\frac{{\sqrt{D(g)}l_{g(\gamma)}}}{2} \rceil$. 

Let $\widehat{G}$ be the subgroup of $G$ generated by $\mathcal{T}_{N,0}$ and $\mathcal{T}_{0,N}$. Since $\gamma_\alpha$ is a simple closed curve in $\R^2$ contained in a fundamental domain $[a+ N , b +  N ]$ of $\widehat{G}$, it is a simple contractible closed geodesic in the quotient Riemannian manifold $(\widehat{T},\widehat{g})$ that is obtained by quotienting $(\R^2,\widetilde{g}:=\Pi^*g)$ by the action of  $\widehat{G}$.



{\bf Step 3: Many free homotopy classes.}
Since $\widetilde{\gamma}$ is a simple contractible closed geodesic in $\widehat{T}$, it bounds a disk $\widehat{D}$ in $\widehat{T}$. The surface ${S}:= \widehat{T} \setminus \widehat{D}$ is diffeomorphic to the torus minus a disk, so that $\pi_1{S}$ is the free group with two generators. Let $\alpha_1$ be the projection of $[a,a+N] \times \{b\}$ to $\widehat{T}$ and $\alpha_2$ be the projection of $\{a\} \times [b,b+N] $ to $\widehat{T}$: $\{\alpha_1,\alpha_2\}$ is a basis of $\pi_1{S}$.

We proceed to estimate the $\widehat{g}$-length of $\alpha_1$ and $\alpha_2$.
It is clear that  \[{l}_{\widehat{g}}(\alpha_1)=N{l}_{\widetilde{g}}([a,a+1] \times \{b\}) \quad \mbox{and}\quad {l}_{\widehat{g}}(\alpha_2)=N{l}_{\widetilde{g}}( \{a\} \times [b,b+1] ).\] 
We know that $ \max\{{l}_{\widetilde{g}}(\{a\} \times [b,b+1]), {l}_{\widetilde{g}}( [a,a+1] \times \{b \}) \} \leq {\sqrt{D(g)}}$.

Recall that the set of free homotopy classes of loops $\Omega(S)$ in $S$ equals the set of conjugacy classes of $\pi_1(S)$. Given a number $n$ we denote by $\Omega_n(S)$ to be the set of elements in $\Omega(S)$ which have at least one representative in $\pi_1(S)$ with word length $\leq n$. It is well-known that $\# \Omega_n(S) \geq \frac{8(3)^{n-2}}{n}$. 

Let $\rho \in \Omega_n(S)$. Because we can represent $\rho$ as a word in $\alpha_1$ and $\alpha_2$ with word length $\leq n$, we can find curves in $\rho$ whose length is $\leq  N{e^{D(g)}}n$.
 Since $(S,\widehat{g}) $ is a Riemannian surface with geodesic boundary, there exists a minimizing closed geodesic $\gamma_\rho$ of  $(S,\widehat{g})$ in $\rho$ contained in the interior of $S$. The length of $\gamma_\rho$ is  $\leq  N{\sqrt{D(g)}}n$

{\bf Step 4: Many geodesics lead to entropy.}
Let $\mathcal{N}^C(\widehat{g})$ be the set of prime minimizing closed geodesics of length $\leq C$ in $(S,\widehat{g})$ and  $\widetilde{\mathcal{N}}^C(\widehat{g})$ be the set of minimizing closed geodesics of length $\leq C$ in $(S,\widehat{g})$. A simple computation shows that the exponential growths $\limsup_{C \to +\infty} \frac{\log (\# \mathcal{N}^C(\widehat{g}))}{C} $ of $\mathcal{N}^C(\widehat{g})$ and  $\limsup_{C \to +\infty} \frac{\log (\# \widetilde{\mathcal{N}}^C(\widehat{g}))}{C} $ of $\widetilde{\mathcal{N}}^C(\widehat{g})$ are the same.

But $$ \limsup_{C \to +\infty} \frac{\log (\# \widetilde{\mathcal{N}}^C(\widehat{g}))}{C} \geq \frac{1}{N\sqrt{D(g)}}\limsup_{n \to +\infty} \frac{\log (\# \Omega_n(S) ) }{n} \geq \frac{\log(3)}{N\sqrt{D(g)}}.$$

 Let $\varphi_{\widehat{g}}$ be the geodesic flow of $\widehat{g}$ on $T^1 \widehat{T}$. We consider the set of globally minimizing geodesics of $(S, \widehat{g})$. The lift of this set is a compact invariant set $\mathcal{C}$ of $\varphi_{\widehat{g}}$ completely contained in $T^1 S$. The lift of every minimizing closed geodesic of $(S,\widehat{g})$ belongs to $\mathcal{C}$.
Reasoning as in the proof of Manning's inequality one obtains that the exponential growth of $\mathcal{N}^C(\widehat{g})$ is a lower bound for $h_{\rm top}(\varphi_{\widehat{g}}|_{\mathcal{C}})$. Noting that two different prime closed geodesics of $(S,\widehat{g})$ give closed trajectories of $\varphi_{\widehat{g}}$  in $\mathcal{C}$ which are not homotopic in $T^1 S$, one can also obtain that the exponential growth of $\mathcal{N}^C(\widehat{g})$ is a lower bound for $h_{\rm top}(\varphi_{\widehat{g}}|_{\mathcal{C}})$ by using Theorem 1 of \cite{Alves1}. We have thus concluded that $$ h_{\rm top}(\varphi_{\widehat{g}}) \geq  h_{\rm top}(\varphi_{\widehat{g}}|_{\mathcal{C}}) \geq  \frac{\log(3)}{N\sqrt{{D(g)}}}. $$ Since $ h_{\rm top}(\varphi_{\widehat{g}}) =  h_{\rm top}(\varphi_{{g}})$ we have shown that for every Riemannian metric $g$ with $d_{C_0}(g,g_0) < \epsilon$ we have $h_{\rm top}(\varphi_{{g}}) \geq \frac{\log(3)}{N{\sqrt{D(g)}}}$, which completes the proof of the theorem. \qed 



\subsection{Proof of item (1) of Theorem \ref{thm:robustDM} in case $\gamma_0$ is degenerate}\label{subsec:deg}

In the degenerate case we face the problem that the closed contractible geodesic may not persist after perturbation of the metric. It is easy to construct examples where it does not. However, in the following proof we are able to show that there are different contractible geodesics that will persist. The proof follows the scheme of the non-degenerate case.

\begin{proof}

{\bf Step 1: Simplifying the geodesic.} 
First, it is sufficient to consider the case where the lift $\widetilde{\gamma_0}$ to $\R^2$ is a simple closed contractible curve. This follows from \cite{Angenent3} who showed any Riemannian metric on $T^2$ with a closed contractible geodesic $\alpha$ with length $l$ has a a closed contractible geodesic $\alpha'$ whose length is $\leq l$ and whose lift to $\R^2$ is simple. For completeness we sketch the proof of this fact. If a lift $\widetilde{\alpha}$ of $\alpha$ is simple there is nothing to be done. Otherwise, letting $\Sigma$ be the closure of the unbounded component of $\R^2 \setminus \widetilde{\alpha} $, we know that $\partial \Sigma$ is a geodesic polygon formed by pieces of $\widetilde{\alpha}$, and the length of $\partial \Sigma$ is clearly smaller than $l$. Perturbing $\partial \Sigma$ to a curve completely contained in the interior of $\Sigma$ and applying the curve shortening flow one obtains the desired $\alpha'$.

Reasoning as in the proof of the non-degenerate case we obtain a square of the form $[a, a+ \lceil \frac{\sqrt{D(g_0)}}{2} \rceil ] \times [b, b+ \lceil \frac{\sqrt{D(g_0)}}{2} \rceil ] $. We let $(\widehat{T}^2,\widehat{g}_0)$ be the quotient of $(\R^2,\pi^*g_0)$ by the group of translations generated $\mathcal{T}_{0,M }$ and $\mathcal{T}_{M \rceil,0 }$, where $M:= \lceil \frac{\sqrt{D(g_0)}}{2} \rceil$.

We denote by $S$ the non-contractible component of $\widehat{T}^2\backslash  {\rm im}(\gamma_0)$ and by $D$ its complement. Note that $\partial D\subseteq {\rm im}(\gamma_0)$.\\\

{\bf Step 2: Finding the geodesic.} 

Fix a lift $\widetilde{\gamma_0}$ and let $\widetilde{\gamma_0}'$ be the the closest lift of ${\gamma_0}$ which does not intersect  $\widetilde{\gamma_0}$. The distance between $\widetilde{\gamma_0}$ and  $\widetilde{\gamma_0}'$ is $\leq \sqrt{D(g)}$: this is an elementary exercise and we leave this as an exercise to the reader. We let $\iota$ be the shortest geodesic of $\pi^*g_0$ connecting $\widetilde{\gamma_0}$ and  $\widetilde{\gamma_0}'$.  Let then $\varsigma$ be a simple closed curve in $\R^2$ contained in a small tubular neighbourhood of $\widetilde{\gamma_0} \cup \widetilde{\gamma_0}'\cup \iota$, and such that the bounded component of $\R^2 \setminus \varsigma$ contains both $\widetilde{\gamma_0}$ and  $\widetilde{\gamma_0}'$; see Figure \ref{curvesigma}.

We denote by $\rho$ the free homotopy class of loops in $S$ which contain the projection of $\varsigma$ to $S$. The class $\rho$ has the following properties:
    \begin{enumerate}
        \item Curves in $\rho$ are not contractible in $S$,
        \item curves in $\rho$ are contractible in $T^2$,
        \item curves in $\rho$ are not homotopic to $\partial S$ in $S$.
    \end{enumerate}
Given any $\varepsilon>0$, there are curves in $\rho$ with length $\leq 2(l_{g_0}(\gamma_0) + \sqrt{D(g_0)})+ \varepsilon$.
This is obtained by considering curves homotopic to $\varsigma$ and contained in sufficiently small neighbourhoods of $\widetilde{\gamma_0} \cup \widetilde{\gamma_0}'\cup \iota$; see Figure \ref{curvesigma}.
 
\begin{figure}
  \includegraphics[width=11cm]{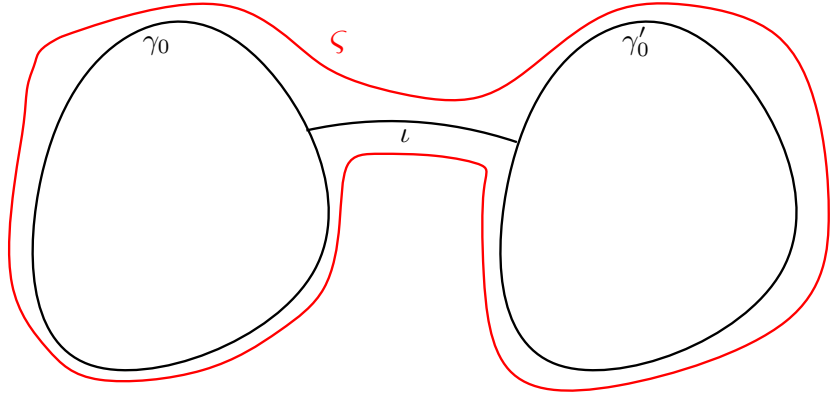}
  \caption{The construction of the curve $\varsigma$.}
  \label{curvesigma}
\end{figure} 
 
We define
$$\sigma_\rho := \inf_{\gamma\in\rho}\Ecal_{g_0}(\gamma).$$
By the remark above we know that $\sigma_\rho \leq  2(l_{g_0}(\gamma_0) + \sqrt{D(g_0)}) $
The infimum $\sigma_\rho$ is actually a minimum, and this can be proved using piecewise smooth geodesics and the strategy to prove \cite[Theorem 1.5.1]{Jost}.
Let $\Acal_\rho$ be the set of curves in $S$ whose $g_0$-energy is $\sigma_\rho$. They are all smooth geodesics of $g_0$.

\

{\bf Step 3: Repelling boundary} 
We show that energy minimising curves have to be contained in the interior of $S$.
    \begin{lemma}\label{lem:intersectingboundary}
    Given $\rho$ as in the last step, there is $\delta_\rho>0$ such that every loop in $\rho$ that intersects $\partial S$ has energy $>\sigma_\rho+\delta_\rho$. 
    \end{lemma}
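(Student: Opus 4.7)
The plan is to argue by contradiction using compactness of bounded-energy loops and lower semicontinuity of the energy functional. If the lemma failed, a minimizing sequence of loops in $\rho$ touching $\partial S$ would produce an energy minimizer whose image meets $\partial S$, and such a minimizer will be ruled out using the topological properties of $\rho$.

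Assume no such $\delta_\rho>0$ exists, so there is a sequence $(\gamma_n)\subset \Lcal_\rho S$ with $\gamma_n(S^1)\cap \partial S\neq\emptyset$ and $\Ecal_{g_0}(\gamma_n)\to \sigma_\rho$. Reparametrize each $\gamma_n$ at constant speed, which preserves the energy; bounded energy yields a uniform Lipschitz bound, so Arzel\`a--Ascoli on the compact surface $\bar S$ extracts a subsequence converging uniformly to a Lipschitz loop $\gamma_\infty:S^1\to \bar S$. The uniform $H^1$ bound then gives, after a further subsequence, weak $H^1$ convergence to $\gamma_\infty$, so by lower semicontinuity $\Ecal_{g_0}(\gamma_\infty)\leq \sigma_\rho$. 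Since $\rho$ is nontrivial we have $\sigma_\rho>0$, hence $\gamma_\infty$ is non-constant, and uniform convergence to a non-constant loop forces $[\gamma_\infty]=\rho$ in $\pi_1(\bar S)\cong \pi_1(S)$. A short inward push through a collar of $\partial S$ shows every loop in $\Lcal_\rho \bar S$ is an $H^1$-limit of loops in $\Lcal_\rho S$, so $\inf_{\Lcal_\rho \bar S}\Ecal_{g_0}=\sigma_\rho$, forcing $\Ecal_{g_0}(\gamma_\infty)=\sigma_\rho$. Thus $\gamma_\infty$ is an energy minimizer, and since $\partial S$ is closed and $\gamma_n(S^1)\cap \partial S\neq\emptyset$ for each $n$, the uniform limit also satisfies $\gamma_\infty(S^1)\cap \partial S\neq\emptyset$.

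It then remains to show that no energy minimizer in $\Lcal_\rho \bar S$ can meet $\partial S$. Suppose $\gamma_\infty(t_0)\in \partial S$. A minimizer cannot have a transversal corner against $\partial S$ at $t_0$: by the strict triangle inequality in a normal neighborhood of $\gamma_\infty(t_0)$ one would strictly shorten a small arc around $t_0$ by replacing it with a geodesic chord pushed slightly into the interior $S$, contradicting minimality. Hence the one-sided tangent vectors at $t_0$ agree and are tangent to $\partial S$. Since $\partial S\subseteq \mathrm{im}(\gamma_0)$ is itself a geodesic of $g_0$, uniqueness of geodesics with prescribed initial conditions forces $\gamma_\infty$ to coincide with $\gamma_0$ in a neighborhood of $t_0$. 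Thus $\gamma_\infty^{-1}(\partial S)\subset S^1$ is both open and closed, hence equals all of $S^1$ by connectedness. So $\gamma_\infty$ lies entirely on $\gamma_0$ and its free homotopy class is an iterate of $[\partial S]$ in $\pi_1(S)$, contradicting property (3) of $\rho$.

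The main obstacle is the boundary analysis of the limit $\gamma_\infty$: one must exclude both transversal corners (handled by a local shortening argument in a normal neighborhood) and tangential contacts (handled by uniqueness of geodesics together with the topological assumption that $\rho$ is not an iterate of $[\partial S]$). The remaining compactness and lower semicontinuity steps are routine functional-analytic facts.
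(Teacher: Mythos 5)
Your proof is correct and follows essentially the same strategy as the paper's: take a minimizing sequence of loops in $\rho$ touching $\partial S$, extract a limit minimizer by compactness, and rule it out because a minimizer meeting the geodesic boundary must be a geodesic tangent to $\partial S$, hence coincide with it, contradicting property (3) of $\rho$. The only difference is technical — you get compactness via constant-speed reparametrization, Arzelà–Ascoli and weak $H^1$ lower semicontinuity, while the paper first replaces the loops by broken geodesics with a uniformly bounded number of corners — and both routes are standard.
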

    \begin{proof}
        If this is false, then we find a sequence $\tau_m$ of $C^\infty$ loops in $\rho$ that touch $\partial S$ with $\Ecal_{g_0}(\tau_m)\to\sigma_\rho$. Then it is possible to find a natural number $K$ and replace $\tau_m$ by a sequence of piecewise smooth geodesics $\haat\tau_m$ in $\rho$ with at most $K$ corners and satisfying $\Ecal_{g_0}(\haat\tau_m)\to\sigma_\rho$. \footnote{To guarantee that we can find a uniform bound on the number of corners of elements of the sequence one uses that  $(S,g_0)$ has geodesic boundary and that its convexity radius is therefore positive.} Then, $\haat\tau_m$ converges up to subsequence to a piecewise smooth geodesic $\tau$ that touches the boundary and that has energy $\Ecal_{g_0}(\tau)=\sigma_\rho$. Because $\Ecal_{g_0}(\tau)=\sigma_\rho$ it must be a smooth geodesic and since it is contained in $S$ and touches $\partial S$ it must be tangent to $\partial S$. This implies that $\tau$ is a geodesic tangent to the geodesic $\partial S$ and thus $\tau$ coincides with $\partial S$. But this is impossible because curves in $\rho$ are not homotopic to the boundary.
    \end{proof}

{\bf Step 4: Perturbing the metric.}
    Consider now the open set $V_{\delta_\rho/2}$ of loops contained in $S$ and belonging to $\rho$ with energy in the interval $[\sigma_\rho,\sigma_\rho+\delta_\rho/2)$. From Step $3$, all loops in $\overline{V_{\delta_\rho/2}}$ have image in the interior of $S$. 
    
    Denote by $U_{\varepsilon}$ the set of Riemannian metrics $g$ satisfying $(1-\varepsilon)g_0<g<(1+\varepsilon)g_0$. 
    For $g\in U_\varepsilon$ and any $\tau' \in W^{1,2}(S^1,T^2)$ we have 
    $$(1-\varepsilon)E_{g_0}(\tau)<E_g(\tau')<(1+\varepsilon)E_{g_0}(\tau).$$
    Choose $\varepsilon>0$ small enough such that
    \begin{equation}
        (1+\varepsilon)\sigma_\rho < (1-\varepsilon)(\sigma_\rho + \frac{\delta_\rho}{2})
    \end{equation}
    
    Then, we can show that any $g\in U_\varepsilon$ has a closed contractible geodesic which belongs to $\overline{V_{\delta_\rho/2}}$. For this we reason as follows:
    
    Let $\gamma_\rho$ be a geodesic of $g_0$ in the class $\rho$ and with $E_{g_0}(\gamma_\rho)=\sigma_\rho$. The curve $\gamma_\rho$ is in the interior of $\overline{V_{\delta_\rho/2}}$. We have $$E_g(\gamma_\rho)<(1+\varepsilon)\sigma_\rho.$$
    Let $\phi_g$ be the negative gradient flow of $\Ecal_g$. Then $\phi_g^t(\gamma_\rho)\in \overline{V}_{\delta_\rho/2}$ for all $ t\geq 0$ because $E_g$ decreases along trajectories of $\phi_g$ and because for all $\tau' \in \partial \overline{V}_{\delta_\rho/2}$ we have
    $$E_g(\tau')\geq (1-\varepsilon)E_{g_0}(\tau') = (1-\varepsilon)(\sigma_\rho+\delta_\rho/2)\geq (1+\varepsilon)\sigma_\rho>E_g(\gamma_\rho).$$
    It follows that $\phi^t_g(\gamma_\rho)$ cannot cross the boundary of $\overline{V_{\delta_\rho/2}}$ and is thus trapped in $\overline{V_{\delta_\rho/2}}$. By Palais-Smale $\phi^t_g(\gamma_\rho)$ converges to a geodesic $\gamma$ of $g$ which must then be in $\overline{V_{\delta_\rho/2}}$. The geodesic $\gamma$ has image in the interior of $S$ and belongs to $\rho$. 
\\\    
    
{\bf Step 5: Uniform lower bound on the entropy.} To obtain a uniform lower bound on  the topological entropy of metrics in $U_\varepsilon$ one uses an argument with finite covers identical to the one used in the proof  of the non-degenerate case. The crucial point is that the Riemannian metric $g$ has a contractible closed geodesic whose length is very close to $\sigma_\rho \leq 2(l_{g_0}(\gamma_0) + \sqrt{D(g_0)}) $.  

\end{proof}

\subsection{A lower bound on topological entropy in terms of surface area}\label{subsec:areabound}
The lower bound on the topological entropy obtained in the proofs of the first part of Theorem~\ref{thm:robustDM} depends on the number of fundamental domains a lift of a closed contractible geodesic in the torus $(T^2,g)$ of a fixed length may intersect.  
A metric invariant that often appears naturally in the investigation of lower bounds on entropy is the surface area.  In this section we observe that there is a lower bound depending only on surface area and minimal length of closed contractible geodesics on $(T^2,g_0)$.
 Since surface area $C^0$ continuously depends on the metrics on $T^2$ this gives an alternative robust bound.
 
The main additional idea is to first find for a genus $1$ surface $(\Sigma, g)$ with one (geodesic) boundary component $\gamma$ two short curves $u_+$ and $u_-$ that generate exponential growth rate $\log(2)$ in the fundamental group of $\Sigma$, where the length of $u_+$ and $u_-$ are bounded in terms the length of $\gamma$ and the area of $(\Sigma, g)$.


Precisely, let $\Sigma$ be a surface homeomorphic to a torus with boundary. We endow $\Sigma$ with a Riemannian metric $g$ such that the boundary curve is a geodesic.  This is in particular the situation given by a closed geodesic on a torus, of which we remove the simply  connected component of the complement of the geodesic. 
We fix an orientation of $\Sigma$ which induces an orientation on $\partial \Sigma$.
Denote by $L$ the length of $\partial \Sigma$ and by $A$ the area of $(\Sigma, g)$. 



We consider the following set of curves:
\[B=\left\lbrace \beta : I\to \Sigma \mbox{ with } \beta(\partial I)\subset \partial \Sigma \mbox{ and } \left[\beta\right] \mathrm{non}\; \mathrm{trivial}\; \mathrm{and}\; \mathrm{non}\; \mathrm{homotopic}\; \mathrm{to}\;\partial\Sigma\right\rbrace\]
and the quantity
\[d=\inf\left\lbrace l_g(\beta)\;,\;\beta\in B\right\rbrace.\]
The infimum is obtained. Let $\alpha$ be a curve in $B$ such that $l_g(\alpha)= d$.
Let $x\in\Sigma$ be the point of $\alpha$ that divides it in two paths of equal length $d/2$,
$\alpha_-$ the restriction of $\alpha$ to the path between $\alpha(0)$ and $x$; $\alpha_+$ the restriction of $\alpha$ to the path between $x$ and $\alpha(1)$.

Let us denote also by $\gamma_-$ and $\gamma_+$ two paths in $\partial \Sigma$ (disjoint up to the endpoints) with $\gamma_-(0) = \gamma_+(0) = \alpha(1)$ and $\gamma_-(1) = \gamma_+(1) = \alpha(0)$ (one of which is constant if the endpoints of $\alpha(0) = \alpha(1)$ are the same). We choose these path such that $\gamma_+$ follows $\partial \Sigma$ in positive orientation and $\gamma_-$ follows  $\partial\Sigma$ in negative orientation. 

We consider the two homotopy classes $\xi_\pm\in \pi_1(\Sigma,x)$:
\[\xi_\pm=\left[ \alpha_+\cdot \gamma_\pm\cdot \alpha_-\right].\]

We need to argue that, first, we can find representatives of $\xi_\pm$ of small lengths and, second, that $\xi_\pm$ generate exponential growth in $\pi_1(\Sigma,x)$.

\textbf{Representatives of small lengths.} The general argument is to consider a family of closed curves such that each one together with $\partial \Sigma$ bounds an annulus in $\Sigma$. Integration on their lengths will show that there is an upper bound on the lengths of smallest representatives of $\xi_{\pm}$ in terms of $A$ and $L$. 

\begin{lemma}\label{lem:bound_generators}
There exist representatives of $\xi_\pm$, both of lengths smaller than $M:=\sqrt{L^2+4A}$. 
\end{lemma}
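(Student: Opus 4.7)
The bound $M=\sqrt{L^2+4A}$ has the Pythagorean form characteristic of isoperimetric/sweepout estimates, and the authors' own hint points to a family of closed curves each cobounding an annulus with $\partial\Sigma$. My strategy is therefore to construct such a one-parameter family, use the coarea formula to find a short member, and then cut along $\alpha$ to produce representatives of $\xi_+$ and $\xi_-$ simultaneously.

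Concretely, for $h\in[0,H]$ (with $H$ below the convexity radius of $\partial\Sigma$), I would take $c_h$ to be the inner boundary of the tubular neighborhood $N_h=\{x\in\Sigma:\dist(x,\partial\Sigma)\le h\}$. Each $c_h$ is freely homotopic to $\partial\Sigma$ in $\Sigma$, and by the coarea inequality
$$\int_0^H l(c_h)\,dh \le \area(N_H) \le A,$$
so for some $h_0\in[0,H]$ the curve $c_{h_0}$ is short on average. From the definition $\xi_{\pm}=[\alpha_+\cdot\gamma_\pm\cdot\alpha_-]$ one checks directly that $\partial\Sigma$ is conjugate to $\xi_+\xi_-^{-1}$; consequently, when $c_h$ meets $\alpha$ transversely in two points, the two resulting sub-arcs $c_h^\pm$ of $c_h$, concatenated with the intermediate sub-arc $\alpha_{12}$ of $\alpha$, produce two loops $L_+=c_h^+\cdot\alpha_{12}^{-1}$ and $L_-=\alpha_{12}\cdot c_h^-$ that represent $\xi_+$ and $\xi_-$ (up to conjugation/free homotopy).

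The last step is an elementary optimization. Writing $l(L_\pm)\le l(c_h^\pm)+l(\alpha_{12})$ and noting that $l(\alpha_{12})\le d-2h$ (because $\alpha$, being length-minimizing in $B$, meets $\partial\Sigma$ orthogonally), one trades the $\alpha_{12}$-contribution against the rate at which $N_h$ accumulates area; a Cauchy-Schwarz/AM-GM estimate applied to the coarea bound then yields the sharp Pythagorean form. The main obstacle I expect is to ensure that both $L_+$ and $L_-$ are short \emph{simultaneously} (not merely one of them): this requires choosing $h$ so that $\alpha$ cuts $c_h$ into two approximately equal arcs, which should follow from a symmetry/mountain-pass argument applied to the split $l(c_h^+)+l(c_h^-)=l(c_h)$, together with the topological check that $L_+$ and $L_-$ land in the correct conjugacy classes rather than being mutually homotopic or boundary-parallel.
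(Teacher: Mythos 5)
Your overall strategy -- a one-parameter family of curves parallel to $\partial\Sigma$, an integral-geometric (coarea) bound on their lengths, and a cut-and-paste with $\alpha$ to manufacture representatives of $\xi_\pm$ -- is exactly the paper's. But two of the steps you flag as delicate are handled incorrectly or left unresolved, and one restriction you impose quietly kills the estimate.

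First, the ``simultaneity'' problem you identify at the end is a false obstacle, and the fix you propose (choosing $h$ so that $\alpha$ splits $c_h$ into two \emph{equal} arcs, via a ``symmetry/mountain-pass argument'') is not an argument: there is no reason such an $h$ exists, and nothing in the geometry forces the split to equalize. The correct resolution is much cruder: bound \emph{each} of the two arcs of the equidistant curve by its \emph{total} length. The paper's loops $u^t_\pm=\alpha^t_+\cdot\gamma^t_\pm\cdot\alpha^t_-$ satisfy $l_g(u^t_\pm)\le (d-2t)+l_g(\gamma^t)$ for \emph{both} signs simultaneously, with no splitting needed; the Pythagorean constant $M=\sqrt{L^2+4A}$ already comes out of this generous bound. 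Second, restricting $h$ to lie below the convexity radius of $\partial\Sigma$ is fatal: the coarea integration must run over the full interval $[0,d/2]$ (the paper integrates over $[d/2-\sigma,d/2]$ for every $\sigma\le d/2$ and lets the quadratic $\sigma^2-M\sigma+A\ge 0$ force $d<M-L$, contradicting the trivial bound $l(u^0_\pm)\le L+d$ at $t=0$). For $t$ beyond the injectivity/convexity scale the equidistant set $\mathcal{G}^t$ is only piecewise smooth and may have several components; one needs Hartman's structure theorem plus a topological claim (the paper's Claim~\ref{claim_ann}, which uses the minimality of $d$ in an essential way) to identify the one distinguished component $\gamma^t$ that cobounds an annulus with $\partial\Sigma$ and hence can be used in the cut-and-paste. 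Your final ``Cauchy--Schwarz/AM-GM'' step is only a gesture at this contradiction argument. The topological identification of $\xi_\pm$ via $\xi_+\xi_-^{-1}\sim[\partial\Sigma]$ is fine.
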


\begin{proof}

We introduce the equidistant sets for $t<\frac{d}{2}$,
\[\mathcal{G}^t = \left\lbrace x\in \Sigma; \dist_g(x,\partial \Sigma)=t\right\rbrace.\]

It follows from a classical result of Hartman \cite{Hartman}, see also \cite[Theorem 4.4.1]{ShiohamaShioyaTanaka}, that there is a closed set of zero Lebesgues measure of exceptional parameters $E \subset [0,d/2]$ such that for all $t\in R := [0,d/2] \setminus E$, $\mathcal{G}^t$ consists of finitely many connected components, each beeing the image of a piecewise smooth closed simple path. In particular, the length $l_g(\mathcal{G}^t)$ of $\mathcal{G}^t$ is well-defined for $t\in R$.
 Moreover, if we denote $\mathcal{B}^t := \left\lbrace x\in \Sigma; \dist_g(x,\partial \Sigma)\leq t\right\rbrace$, which is a subsurface in $\Sigma$ bounded by $\partial \Sigma$ and $\mathcal{G}^t$, then for $t\in R$, $\frac{d}{dt} \area_g(\mathcal{B}^t) = l_g(\mathcal{G}^t)$.
 
It is straightforward to see that $\alpha$ is not contained in $\mathcal{B}^t$. 
Note also that, if $t< d/2$, $\mathcal{B}^t$ has genus zero. Take the collection of discs in $\Sigma$ that any component of $\mathcal{G}^t$ may bound in the complement of $\mathcal{B}^t$ and attach it to $\mathcal{B}^t$. We denoted this surface  by $\widetilde{\mathcal{B}^t}$. 
\begin{claim}\label{claim_ann}
$\widetilde{\mathcal{B}^t}$ is an annulus that is bounded by $\partial \Sigma$ and one distinguished connected component $\gamma^t$ of $\mathcal{G}^t$.
\end{claim}
\textit{Proof:}
Assume by contradiction that the boundary of $\widetilde{\mathcal{B}^t}$ has more than one connected component besides $\partial \Sigma$. Then there exist a path $\beta$, completely contained in ${\mathcal{B}^t}$,  with endpoints in $\partial\Sigma$, and which is non-homotopic in $\Sigma$ relative to $\partial \Sigma$ to a  path in $\partial \Sigma$. 
Let $l_0$ be the infimum of the lengths of such paths. We claim that $l_0<d$ which contradicts the definition of $d$.
For this, choose $\beta$ as above to have length $l_0 \leq l_g(\beta) < l_0 + (d-2t)$. 
There is a point $y$ on $\beta$ that divides $\beta$ into two subpaths $\beta_0$ and $\beta_1$  of equal length $l_g(\beta)/2$. By the definition of $\mathcal{B}^t$, there is a path $\hat{\beta}$ from $y$ to $\partial \Sigma$ in $\mathcal{B}^t$ of length $\leq t$. Either the concatenation of $\beta_0 \cdot \hat{\beta}$ or $\overline{\beta_1} \cdot \hat{\beta}$ is non-homotopic relative $\partial \Sigma$ to a path in $\partial \Sigma$. Hence $l_0\leq l_g(\beta)/2 + t < l_0/2  + d/2$, and hence $l_0 < d$. 
\qed



Via the annulus that $\gamma^t$ and $\partial \Sigma$ bound we choose the orientation of $\gamma^t$ to be parallel to that of $\partial \Sigma$.   
For each $t\in R$, $t< d$, both paths,  $\alpha_+$ and $\alpha_-$, intersect $\gamma^t$ in some point, say $z^t_+$ and $z^t_-$, respectively. 
Let $\alpha^t_-$ be the subpath of $\alpha_-$ from $z^t_-$ to $x$, and  $\alpha^t_+$ be the subpath of $\alpha_+$ from $x$ to $z^t_+$.
Let $\gamma_+^t$ and $\gamma_-^t$ be two path on $\gamma^t$  (disjoint up to the endpoints) with $\gamma_+^t(0) = \gamma_-^t(0) = z^t_+$ and $\gamma_+^t(1) = \gamma_-^t(1) = z^t_-$.  We choose these path such that $\gamma_+^t$ follows $\gamma^t$ in positive orientation and $\gamma_-^t$ follows  $\gamma^t$ in negative orientation. 
Consider the loops $u^t_\pm=\alpha^t_+ \cdot \gamma^t_\pm \cdot \alpha^t_-$.
By the choice of orientations we have that $[u^t_\pm] = \xi_{\pm} \in \pi_1(\Sigma,x)$. 
The point $z^t_{\pm} \in \gamma^t$ are at distance $t$ from $\partial\Sigma$, by the definition of $\gamma^t$. Hence, the length of $u^t_\pm$ satisfy
\[l_g(u^t_\pm) \leqslant d -2t + l_g(\gamma^t).\]

We will show that there is $t\in R$ such that the length of both $u^t_{\pm}$ is $<M$. (Note here, that also $0\in R$, and $l_g(u^0_{\pm}) < L + d$). 
Assume the contrary. This means that for all $t\in R$, \[l_g(\gamma^t)\geqslant M+2t-d.\]

 Now, for  any $0\leq \sigma \leq d/2$, 
\begin{align}
\begin{split}
A &\geqslant \area_g(\mathcal{B}^{d/2}) - \area_g(\mathcal{B}^{d/2 -\sigma}) \geqslant 
\int_{d/2 -\sigma}^{d/2} l_g(\gamma^t) \, dt\\ &\geqslant \int_{d/2 -\sigma}^{d/2} M +2t-d \, dt  = \int_0^{\sigma} M-2s\,  ds = M\sigma -\sigma^2, 
\end{split}
\end{align}
hence for all $0\leq \sigma \leq d/2$, 
$$0 \leqslant \sigma^2- M\sigma + A = \left(\sigma - \frac{1}{2}(M - \sqrt{M^2 -4A})\right)\left(\sigma - \frac{1}{2}(M+\sqrt{M^2-4A})\right),$$ which means that 
$$M - \sqrt{M^2 -4A} > d,$$ so  
${M} - L > d$, and therefore
$M> d+ L \geq l_g(u^0_{\pm})$, a contradiction to our assumption. 
\end{proof}
\textbf{Generating growth.}
With Lemma \ref{lem:bound_generators} we obtain   
\begin{lemma}\label{lem:generating_growth}
The number of free homotopy classes of loops in $\Sigma$ that have a representative of length $\leq Mn$ is $\geq \frac{2^n-2}{n}$. 
\end{lemma}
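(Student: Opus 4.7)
The plan is to construct many distinct free homotopy classes as products of the loops $\xi_+$ and $\xi_-$ furnished by Lemma~\ref{lem:bound_generators}, each of length less than $M$. For any binary word $w = w_1 \cdots w_n \in \{+,-\}^n$, the based loop $\xi_w := \xi_{w_1} \cdots \xi_{w_n} \in \pi_1(\Sigma,x)$ has length at most $nM$, so it suffices to count distinct free homotopy classes arising in this way.

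The first step is to show that $\xi_+$ and $\xi_-$ freely generate a rank-two subgroup $H \leq \pi_1(\Sigma)$. Since $\Sigma$ is a torus with one open disk removed, $\pi_1(\Sigma)$ is free of rank two, and any two of its elements generate either a cyclic or a rank-two free subgroup. To exclude the cyclic case, one first computes directly from $\xi_\pm = \alpha_+ \gamma_\pm \alpha_-$ that $\xi_-^{-1}\xi_+$ is a conjugate of the boundary class $\partial = [\partial\Sigma]$. Since $\partial$ is a commutator of a free basis of $\pi_1(\Sigma) \cong F_2$ and therefore not a proper power, a common root $\eta$ of $\xi_\pm$ would have to be conjugate to $\partial^{\pm 1}$; this would force $[\xi_+] = 0$ in $H_1(\Sigma)$. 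But the long exact sequence of the pair $(\Sigma,\partial\Sigma)$ shows that the natural map $H_1(\Sigma) \to H_1(\Sigma,\partial\Sigma)$ is an isomorphism (the boundary is nullhomologous), and under this isomorphism $[\xi_+]$ equals $[\alpha]$, which is nonzero by the assumption $\alpha \in B$. This contradiction rules out the cyclic case.

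The second step is a combinatorial count. Since $\xi_+,\xi_-$ freely generate $H$, two words $\xi_w, \xi_{w'}$ with $w,w' \in \{+,-\}^n$ are $H$-conjugate iff $w$ and $w'$ are cyclic rotations of each other. The $2^n$ binary words of length $n$ partition into cyclic equivalence classes of size at most $n$, giving at least $2^n/n$ classes; removing the two constant classes $\{+^n\}$ and $\{-^n\}$ leaves at least $(2^n-2)/n$ non-constant cyclic classes.

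The main obstacle is the third step: promoting $H$-conjugacy to $\pi_1(\Sigma)$-conjugacy, i.e.\ to actual free homotopy in $\Sigma$. Since one checks $[\xi_+] = [\xi_-]$ in $H_1(\Sigma)$, the subgroup $H$ has infinite index in $\pi_1(\Sigma)$, so $\pi_1(\Sigma)$-conjugation could in principle identify $H$-elements that are not $H$-conjugate. To argue that any such identification is already accounted for by the two constant classes, I would express $\xi_\pm$ as cyclically reduced words in a fixed free basis of $\pi_1(\Sigma)$ and inspect the resulting cyclic normal forms of $\xi_w$: any $\pi_1(\Sigma)$-conjugacy $\xi_w \sim \xi_{w'}$ between non-constant binary words would require the conjugating element to commute through the word in a way that forces $w$ and $w'$ to be cyclic rotations of one another. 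Combining these three steps yields at least $(2^n-2)/n$ distinct free homotopy classes, each represented by a loop of length at most $Mn$, as required.
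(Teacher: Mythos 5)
Your proposal has the right architecture and, to its credit, correctly isolates the real difficulty: passing from conjugacy in $H=\langle \xi_+,\xi_-\rangle$ to conjugacy in $\pi_1(\Sigma)$. But that third step is exactly where the proof stops being a proof. You write that you ``would express $\xi_\pm$ as cyclically reduced words in a fixed free basis \dots and inspect the resulting cyclic normal forms,'' and that any ambient conjugacy ``would require the conjugating element to commute through the word.'' This is a statement of intent, not an argument, and it cannot be completed abstractly: $H$ is not malnormal in $\pi_1(\Sigma)$ (indeed $\xi_+$ and $\xi_-$ are themselves conjugate in $\pi_1(\Sigma)$ but not in $H$, which is precisely why the constant words must be excluded), so ambient conjugacy genuinely identifies more than $H$-conjugacy and the claim must be checked against the actual normal forms. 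The missing ingredient, which is the entire content of the paper's proof, is the identification of those normal forms: one can choose a free basis $a,b$ of $\pi_1(\Sigma,x)$ with $\xi_-=a$ and $\xi_+=b^{-1}ab$ (consistent with your observation that $\xi_-^{-1}\xi_+$ is conjugate to $[\partial\Sigma]=[a^{-1},b^{-1}]$). With this in hand a non-constant cyclic word $\xi_+^{k_1}\xi_-^{l_1}\cdots\xi_+^{k_m}\xi_-^{l_m}$ cyclically reduces to $b^{-1}a^{k_1}b\,a^{l_1}\cdots b^{-1}a^{k_m}b\,a^{l_m}$, from which the exponent sequence, and hence the original word up to rotation, can be read off; this gives injectivity on non-constant cyclic words and the count $(2^n-2)/n$ directly, without ever invoking the subgroup $H$.

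Two smaller remarks. Once the normal forms are in place, your Steps 1 and 2 become unnecessary detours: free generation of $H$ and the count of $H$-conjugacy classes are byproducts of the normal-form computation rather than inputs to it. And in Step 1, the assertion that $[\alpha]\neq 0$ in $H_1(\Sigma,\partial\Sigma)$ ``by the assumption $\alpha\in B$'' needs a word of justification: it holds because every essential arc in a one-holed torus is non-separating (an Euler characteristic count shows a separating essential arc would have to cut off a disk, contradicting essentiality), so the arc has a dual closed curve meeting it once. As written, the proposal leaves the decisive step unproven.
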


\begin{proof}
$\pi_1(\Sigma, x)$ is a free group of two generators. It is straightforward to check, e.g. with the help of the loops $\alpha$ and $\beta$ of Figure~\ref{fig:oneholedtorus},  that we can choose two elements $a, b \in \pi_1(\Sigma, x)$ that freely generate $\pi_1(\Sigma, x)$ such that $\xi_- = a$ and $\xi_+ = b^{-1} a b$. For a given word $w$ of length $n$ in the letters $\xi_+$ and $\xi_-$, consider the word $\widetilde{w}$ in the letters $a$ and $b$ that we obtain if expressing $\xi_-$ as $a$ and $\xi_+$ as $b^{-1} a b$ and then reducing  cyclically. 
It is straightforward to check that,  if $w_1 \neq w_2$ are such words that are not of the form $\xi_+^n$ or $\xi_-^n$, then $\widetilde{w}_1 \neq \widetilde{w}_2$. The homotopy classes of free loops in $\Sigma$ correspond to conjugacy classes of elements in $\pi_1(\Sigma, x)$. The latter correspond to words in $a$ and $b$ up to cyclic reduction and cyclic permutation. 
Hence by Lemma \ref{lem:bound_generators} and the above considerations, the number of homotopy classes of loops in $\Sigma$ that have a representative of length $\leq Mn$ is at least $\frac{2^n-2}{n}$. 
\end{proof}

With Lemmas~\ref{lem:bound_generators} and \ref{lem:generating_growth} we prove the second part of Theorem~\ref{thm:robustDM}. 

\begin{proof}
Assume first that $g_0$ is bumpy. Then, as above,  there is for all $\varepsilon>0$ some $\delta >0$ such that for all $g$ with $d_{C^0}(g,g_0) < \delta$ a closed contractible geodesic $\gamma$ for $g$  with length $l_{g}(\gamma) < l_{g_0}(\gamma_0) + \varepsilon$. Furthermore, by a sufficiently small choice of $\delta$, one can additionally assume that $\area_{g}(T^2) < \area_{g_0}(T^2) + \varepsilon$. 
Therefore in this case it is enough to prove the lower bound for $h_{top}(\varphi_{g_0})$.

Denote $G$ the group of deck transformations for the covering $\Pi: \widetilde{T^2} \to T^2$, and choose a lift $\widetilde\gamma_0:S^1 \to \widetilde{T^2}$ of $\gamma_0$.

We distinguish two cases: 
\begin{enumerate}
\item For all $\mathcal{T} \in G \setminus \{\id\}$, $\mathcal{T}(\im(\widetilde\gamma_0)) \cap \im(\widetilde\gamma_0) = \emptyset$, 
\item There is $\mathcal{T} \in G \setminus \{\id\}$, $\mathcal{T}(\im(\widetilde\gamma_0)) \cap \im(\widetilde\gamma_0) \neq \emptyset$. 
\end{enumerate}
If case $(1)$ holds, then, as in Step 2 of the proof presented in section \ref{subsec:nondeg} for item (1) of Theorem \ref{thm:robustDM}, we know that there is actually a simple closed contractible geodesic $\gamma$ for $g_0$ with length $l_{g_0}(\gamma) \leq l_{g_0}(\gamma_0)$. Denote $D \subset T^2$ the disc that is bounded by $\gamma$. 
 Then, by Lemma \ref{lem:generating_growth} applied to $\Sigma = T^2\setminus D$, and Lemma \ref{lem:nonhomotopicGeodesicsToEntropy}, we obtain that $$h_{top}(\varphi_{g_0}) > \frac{1}{\sqrt{4\area_{g_0}(\Sigma) + l_{g_0}(\gamma)^2}} \log(2) >\frac{1}{\sqrt{4A + l_{g_0}(\gamma_0)^2}}\log(2).$$ 

In case $(2)$, fix $\mathcal{T} \in G$, $\mathcal{T} \neq \id$ with $\mathcal{T}(\im(\widetilde\gamma_0)) \cap \im(\widetilde\gamma_0) \neq \emptyset$, and let $k\in \N$, $k\geq 2$, with $k= \min\{ l\in \N \, |\,  \mathcal{T}^l(\im(\widetilde\gamma_0)) \cap \im(\widetilde\gamma_0) =  \emptyset\}$. 
Consider the lifts $\widetilde\gamma_1 = \mathcal{T} \circ \widetilde\gamma_0$ and $\widetilde\gamma_k = \mathcal{T}^k \circ \widetilde\gamma_0$. 
Then $\widetilde\gamma_1$ intersects both $\widetilde\gamma_0$ and $\widetilde\gamma_k$, hence $\dist_{\Pi^*g_0}(\im(\widetilde\gamma_0), \im(\widetilde\gamma_k)) < l_{g_0}(\gamma_0)/2$.

Choose $\mathcal{S} \in G$ such that for all $l \in \Z \setminus \{0\}$, and all $m \in \Z$, $\mathcal{S}^l(\im(\widetilde\gamma_0)) \cap   \mathcal{T}^m(\im(\widetilde\gamma_0)) = \emptyset$. Denote $\widehat{T^2}$ the quotient of $\widetilde{T^2}$ by the action of the subgroup of $G$ generated by $\mathcal{S}$ and $\mathcal{T}^k$, and $\widehat{\Pi}:\widehat{T^2} \to T^2$ the induced covering map.
 By the choice of $\mathcal{S}$, $\mathcal{T}$ and $k$, the projected curve  $\widehat{\gamma}_0$ of  $\widetilde\gamma_0$ to $\widehat{T^2}$ is a closed geodesic of $\widehat{\Pi}^*g_0$ that lies in one fundamental domain of the universal covering of $\widehat{T^2}$.
So, as argued before, if $\widehat{\gamma}_0$ is not simple, we may replace it by a closed simple geodesic on $(\widehat{T^2}, \widehat{\Pi}^*g_0)$  with length $\leq l_{g_0}(\gamma_0)$ that encircles $\im(\widehat{\gamma}_0)$. Let $\widehat{D}$ be the disc bounded by $\widehat{\gamma}_0$, let $\widehat{\Sigma}= \widehat{T^2}\setminus \widehat{D}$.  
With analogous notation as before, 
\[\widehat{B}:=\left\lbrace \beta : I\to \Sigma \mbox{ with } \partial I\subset \partial \widehat{\Sigma} \mbox{ and } \left[\beta\right] \mathrm{non}\; \mathrm{trivial}\; \mathrm{and}\; \mathrm{non}\; \mathrm{homotopic}\; \mathrm{to}\;\partial\widehat{\Sigma}\right\rbrace,\] one has 
$\widehat{d}:=\inf\left\lbrace l_{\widehat{\Pi}^*g_0} (\beta)\;,\;\beta\in \widehat{B}\right\rbrace \leq \dist_{\Pi^*g_0}(\im(\widetilde\gamma_0), \im(\widetilde\gamma_k)) < l_{g_0}(\gamma_0)/2$.  
Then, following the construction in the proof of Lemma \ref{lem:bound_generators} and Lemma \ref{lem:generating_growth} one proves that  
$$h_{top}(\widehat{\varphi}_{g_0}) > \frac{1}{\frac{3}{2}L} \log(2),$$
where $\widehat{\varphi}_{g_0}$ denotes the geodesic flow on $\widehat{T^2}$ with respect to the metric $\widehat{\Pi}^*g_0$.  
Since $h_{top}(\widehat{\varphi}_{g_0}) = h_{top}(\varphi_{g_0})$, the assertion follows. 

In the case that $g$ is degenerate, one combines the argument for the proof of the degenerate case of the first part of the theorem with the estimates obtained in the previous paragraph, and observes that there is a locally energy minimizing closed contractible geodesic  $\gamma$ whose lift to the universal cover encircles two distinct lifts of $\gamma_0$ and whose length satisfies 
$$l_{g_0}(\gamma) < \max\left\{4\sqrt{4\area_{g_0}(T^2) + l_{g_0}(\gamma_0)^2},3l_{g_0}(\gamma_0)\right\}.$$
The estimates for $h_{\topo}(g)$ stated in the theorem follow as in the previous paragraph. 
\end{proof}


\section{Robustness of volume entropy and hyperbolic geometry.}\label{sec:hyperbolic}

\definecolor{ttzzqq}{rgb}{0.2,0.6,0}
\definecolor{qqzzqq}{rgb}{0,0.6,0}
\definecolor{ccqqqq}{rgb}{0.8,0,0}
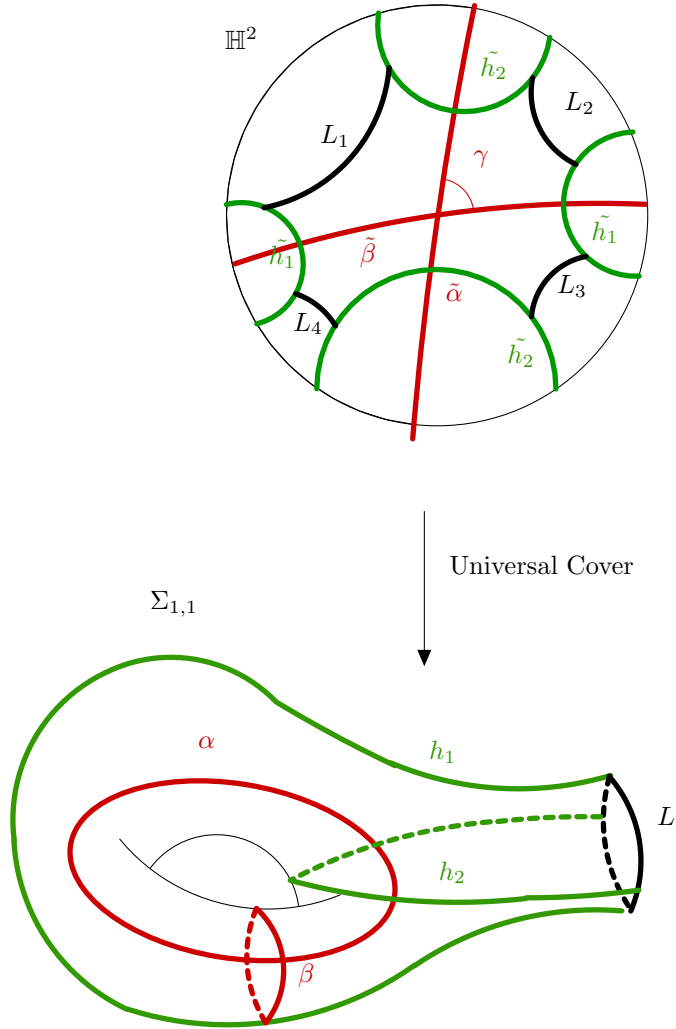
\begin{figure}[!h]
\begin{tikzpicture}[line cap=round,line join=round,>=triangle 45,x=1.0cm,y=1.0cm]
\clip(-4.79,-8.45) rectangle (16.53,7.5);
\draw(2.94,3.37) circle (2.8cm);
\draw [shift={(2.94,3.37)}] plot[domain=1.4:4.6,variable=\t]({1*2.8*cos(\t r)+0*2.8*sin(\t r)},{0*2.8*cos(\t r)+1*2.8*sin(\t r)});
\draw [shift={(51.1,-3.58)},line width=2pt,color=ccqqqq]  plot[domain=2.94:3.06,variable=\t]({1*48.65*cos(\t r)+0*48.65*sin(\t r)},{0*48.65*cos(\t r)+1*48.65*sin(\t r)});
\draw [shift={(5.1,-11.47)},line width=2pt,color=ccqqqq]  plot[domain=1.53:1.9,variable=\t]({1*15*cos(\t r)+0*15*sin(\t r)},{0*15*cos(\t r)+1*15*sin(\t r)});
\draw [shift={(5.58,3.52)},line width=2pt,color=qqzzqq]  plot[domain=1.61:4.76,variable=\t]({1*0.96*cos(\t r)+0*0.96*sin(\t r)},{0*0.96*cos(\t r)+1*0.96*sin(\t r)});
\draw [shift={(3.3,5.9)},line width=2pt,color=qqzzqq]  plot[domain=3:6.14,variable=\t]({1*1.15*cos(\t r)+0*1.15*sin(\t r)},{0*1.15*cos(\t r)+1*1.15*sin(\t r)});
\draw [shift={(0.34,2.72)},line width=2pt,color=qqzzqq]  plot[domain=-1.32:1.82,variable=\t]({1*0.82*cos(\t r)+0*0.82*sin(\t r)},{0*0.82*cos(\t r)+1*0.82*sin(\t r)});
\draw [shift={(2.93,1.06)},line width=2pt,color=qqzzqq]  plot[domain=0:3.14,variable=\t]({1*1.59*cos(\t r)+0*1.59*sin(\t r)},{0*1.59*cos(\t r)+1*1.59*sin(\t r)});
\draw [->] (2.77,-0.57) -- (2.77,-2.63);
\draw [shift={(0.23,5.51)},line width=2pt]  plot[domain=4.91:6.2,variable=\t]({1*2.08*cos(\t r)+0*2.08*sin(\t r)},{0*2.08*cos(\t r)+1*2.08*sin(\t r)});
\draw [shift={(5.25,5)},line width=2pt]  plot[domain=2.95:4.26,variable=\t]({1*1.06*cos(\t r)+0*1.06*sin(\t r)},{0*1.06*cos(\t r)+1*1.06*sin(\t r)});
\draw [shift={(5.09,1.94)},line width=2pt]  plot[domain=1.76:3.05,variable=\t]({1*0.9*cos(\t r)+0*0.9*sin(\t r)},{0*0.9*cos(\t r)+1*0.9*sin(\t r)});
\draw [shift={(0.74,1.39)},line width=2pt]  plot[domain=0.54:1.24,variable=\t]({1*0.99*cos(\t r)+0*0.99*sin(\t r)},{0*0.99*cos(\t r)+1*0.99*sin(\t r)});
\draw [shift={(4.02,0.07)},line width=2pt,color=ttzzqq]  plot[domain=4.31:5,variable=\t]({1*4.33*cos(\t r)+0*4.33*sin(\t r)},{0*4.33*cos(\t r)+1*4.33*sin(\t r)});
\draw [shift={(9.97,12.07)},line width=2pt,color=ttzzqq]  plot[domain=4.17:4.27,variable=\t]({1*17.73*cos(\t r)+0*17.73*sin(\t r)},{0*17.73*cos(\t r)+1*17.73*sin(\t r)});
\draw [shift={(-0.6,-4.46)},line width=2pt,color=ttzzqq]  plot[domain=0.77:2.03,variable=\t]({1*1.95*cos(\t r)+0*1.95*sin(\t r)},{0*1.95*cos(\t r)+1*1.95*sin(\t r)});
\draw [shift={(-0.53,-4.67)},line width=2pt,color=ttzzqq]  plot[domain=2.02:3.27,variable=\t]({1*2.17*cos(\t r)+0*2.17*sin(\t r)},{0*2.17*cos(\t r)+1*2.17*sin(\t r)});
\draw [shift={(-0.05,-4.81)},line width=2pt,color=ttzzqq]  plot[domain=3.19:4.27,variable=\t]({1*2.64*cos(\t r)+0*2.64*sin(\t r)},{0*2.64*cos(\t r)+1*2.64*sin(\t r)});
\draw [shift={(0.15,-3.09)},line width=2pt,color=ttzzqq]  plot[domain=4.4:5.33,variable=\t]({1*4.31*cos(\t r)+0*4.31*sin(\t r)},{0*4.31*cos(\t r)+1*4.31*sin(\t r)});
\draw [shift={(5.05,-10.15)},line width=2pt,color=ttzzqq]  plot[domain=1.49:2.17,variable=\t]({1*4.28*cos(\t r)+0*4.28*sin(\t r)},{0*4.28*cos(\t r)+1*4.28*sin(\t r)});
\draw [shift={(0.69,-3.31)}] plot[domain=3.83:5.1,variable=\t]({1*2.55*cos(\t r)+0*2.55*sin(\t r)},{0*2.55*cos(\t r)+1*2.55*sin(\t r)});
\draw [shift={(0.01,-5.97)}] plot[domain=0.14:2.5,variable=\t]({1*1.1*cos(\t r)+0*1.1*sin(\t r)},{0*1.1*cos(\t r)+1*1.1*sin(\t r)});
\draw [shift={(3.94,-5.22)},line width=2pt]  plot[domain=-0.4:0.72,variable=\t]({1*1.71*cos(\t r)+0*1.71*sin(\t r)},{0*1.71*cos(\t r)+1*1.71*sin(\t r)});
\draw [shift={(7.26,-4.68)},line width=2pt,dash pattern=on 3pt off 3pt]  plot[domain=2.86:3.75,variable=\t]({1*2.11*cos(\t r)+0*2.11*sin(\t r)},{0*2.11*cos(\t r)+1*2.11*sin(\t r)});
\draw [shift={(-0.23,-6.68)},line width=2pt,color=ccqqqq]  plot[domain=-0.66:0.82,variable=\t]({1*1.13*cos(\t r)+0*1.13*sin(\t r)},{0*1.13*cos(\t r)+1*1.13*sin(\t r)});
\draw [shift={(2.08,-6.49)},line width=2pt,dash pattern=on 3pt off 3pt,color=ccqqqq]  plot[domain=2.75:3.7,variable=\t]({1*1.67*cos(\t r)+0*1.67*sin(\t r)},{0*1.67*cos(\t r)+1*1.67*sin(\t r)});
\draw [rotate around={170.88:(0.22,-5.35)},line width=2pt,color=ccqqqq] (0.22,-5.35) ellipse (2.18cm and 1.16cm);
\draw [shift={(4.15,5.03)},line width=2pt,color=ttzzqq]  plot[domain=4.71:4.85,variable=\t]({1*10.74*cos(\t r)+0*10.74*sin(\t r)},{0*10.74*cos(\t r)+1*10.74*sin(\t r)});
\draw [shift={(3.2,2.76)},line width=2pt,color=ttzzqq]  plot[domain=4.45:4.82,variable=\t]({1*8.53*cos(\t r)+0*8.53*sin(\t r)},{0*8.53*cos(\t r)+1*8.53*sin(\t r)});
\draw [shift={(4.28,-11.79)},line width=2pt,dash pattern=on 3pt off 3pt,color=ttzzqq]  plot[domain=1.66:2.05,variable=\t]({1*7.12*cos(\t r)+0*7.12*sin(\t r)},{0*7.12*cos(\t r)+1*7.12*sin(\t r)});
\draw [shift={(4.9,-14.98)},line width=2pt,dash pattern=on 3pt off 3pt,color=ttzzqq]  plot[domain=1.55:1.69,variable=\t]({1*10.35*cos(\t r)+0*10.35*sin(\t r)},{0*10.35*cos(\t r)+1*10.35*sin(\t r)});
\draw (5.74,-4.38) node[anchor=north west] {$ L $};
\draw (1.26,4.7) node[anchor=north west] {$ L_1 $};
\draw (0,6) node[anchor=north west] {$ \mathbb{H}^2 $};
\draw (4.51,5.13) node[anchor=north west] {$ L_2 $};
\draw (4.41,2.7) node[anchor=north west] {$ L_3 $};
\draw (0.9,2.2) node[anchor=north west] {$ L_4 $};
\draw [color=ccqqqq](2.93,2.59) node[anchor=north west] {$ \tilde{\alpha} $};
\draw [color=ccqqqq](1.79,3.2) node[anchor=north west] {$ \tilde{\beta} $};
\draw [shift={(2.94,3.37)},color=ccqqqq]  plot[domain=0.13:1.42,variable=\t]({1*0.49*cos(\t r)+0*0.49*sin(\t r)},{0*0.49*cos(\t r)+1*0.49*sin(\t r)});
\draw [color=ccqqqq](3.3,4.35) node[anchor=north west] {$ \gamma $};
\draw [color=ccqqqq](-0.36,-3.44) node[anchor=north west] {$ \alpha $};
\draw (-1,-1.5) node[anchor=north west] {$ \Sigma_{1,1} $};
\draw [color=ccqqqq](0.97,-6.46) node[anchor=north west] {$ \beta $};
\draw [color=ttzzqq](2.84,-5.1) node[anchor=north west] {$ h_2 $};
\draw [color=ttzzqq](2.71,-3.5) node[anchor=north west] {$ h_1 $};
\draw [color=ttzzqq](3.39,5.68) node[anchor=north west] {$ \tilde{h_2} $};
\draw [color=ttzzqq](3.75,1.86) node[anchor=north west] {$ \tilde{h_2} $};
\draw [color=ttzzqq](4.87,3.5) node[anchor=north west] {$ \tilde{h_1} $};
\draw [color=ttzzqq](0.58,3.17) node[anchor=north west] {$ \tilde{h_1} $};
\draw (2.98,-1.04) node[anchor=north west] {Universal Cover};
\end{tikzpicture}
\caption{\label{fig:oneholedtorus} One-holed torus, generators, heights and boundary.}
\end{figure}

We consider a torus with a boundary component $\Sigma_{1,1}$, endowed with a hyperbolic metric. Associated to this holed torus is its volume entropy, donoted $h_{\mathrm{vol}}$ (cf.\ Definition \ref{def:hvol}). Note that the universal cover is not the whole of $\mathbb{H}^2$ but only a convex subset of $\mathbb{H}^2$ bounded by the lifts of the boundary curve $L$. Hence $h_{\mathrm{vol}}$ is generally smaller than 1.

Both $h_{\mathrm{vol}}$ and $L$ are considered as functions on the Teichmüller space of $\Sigma_{1,1}$.

We prove the following result

\begin{theorem}
If the volume entropy $h$ tends to 0 in the Teichmüller space, then the boundary length $L$ tends to $\infty$.
\end{theorem}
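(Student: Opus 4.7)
The plan is to prove the contrapositive: if $L(g_n)$ stays bounded along a sequence $\{g_n\}$ in the Teichm\"uller space of $\Sigma_{1,1}$, then $h_{\mathrm{vol}}(g_n)$ stays bounded below by a positive constant.

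I would first pass to a group-theoretic reformulation. The universal cover $(\widetilde{\Sigma}_{1,1},\widetilde{g})$ is a convex subset of $\HH^{2}$ on which $\Gamma := \pi_1(\Sigma_{1,1})$ acts as a convex-cocompact Fuchsian group, and in this setting
\[ h_{\mathrm{vol}} \;=\; \delta(\Gamma), \]
the critical exponent of $\Gamma$. Since $\Gamma$ is free of rank $2$, for any free basis $(\alpha,\beta)$ of $\Gamma$ and any basepoint $x_0 \in \HH^{2}$, the $4\cdot 3^{n-1}$ reduced words of length $\leq n$ in $\alpha^{\pm 1},\beta^{\pm 1}$ all move $x_0$ by at most $nD$, where $D := \max\{d(x_0,\alpha x_0),d(x_0,\beta x_0)\}$. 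Counting orbit points in balls of radius $R$ of the universal cover then yields
\[ h_{\mathrm{vol}} \;\geq\; \frac{\log 3}{D}. \]

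It remains to bound $D$ from above in terms of $L$. By Gauss--Bonnet, $\area_g(\Sigma_{1,1})=2\pi$, so the injectivity radius at every interior point is at most the universal constant $r_0 := 2\sinh^{-1}(1/\sqrt{2})$. I would extract from this a simple closed geodesic $\alpha$ in the interior of $\Sigma_{1,1}$ with $l_g(\alpha) \leq 2r_0$. Cutting along $\alpha$ yields a pair of pants with boundary lengths $(L,l_g(\alpha),l_g(\alpha))$ whose orthogeodesic seam $s$ between the two copies of $\alpha$ is given by the classical formula
\[ \cosh(s) \;=\; \frac{\cosh(L/2)+\cosh^{2}(l_g(\alpha)/2)}{\sinh^{2}(l_g(\alpha)/2)}, \]
and a second free generator $\beta$ of $\Gamma$ can then be chosen with translation length at most $s + l_g(\alpha)$. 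Choosing $x_0$ on the axis of $\alpha$ yields an explicit bound $D \leq f(L,l_g(\alpha))$, so that $h_{\mathrm{vol}} \geq \log 3 / f(L,l_g(\alpha))$.

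The main obstacle will be the thin-part regime, where the internal systole $l_g(\alpha) \to 0$: the seam formula blows up and the na\"ive bound on $\beta$ fails. I would handle this by a compactness-modulo-pinching argument. Along any sequence $\{g_n\}$ with $L(g_n)\leq L_0$, either $l_{g_n}(\alpha)\geq \varepsilon$ holds uniformly for some $\varepsilon>0$ and the estimate above produces a positive lower bound on $h_{\mathrm{vol}}$ directly, or $l_{g_n}(\alpha)\to 0$ and the sequence converges algebraically (after normalization) to a non-elementary Fuchsian group representing a pair of pants with one geodesic boundary of length at most $L_0$ and two cusps. Such a limit has strictly positive critical exponent, and by continuity of the critical exponent along algebraic limits of convex-cocompact representations, $h_{\mathrm{vol}}(g_n)$ converges to this positive number, yielding the required uniform lower bound and completing the contrapositive.
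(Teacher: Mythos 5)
Your thick-part argument is sound and takes a genuinely different route from the paper's proof (which uses an explicit octagonal fundamental domain cut into right-angled pentagons, the inequality $\frac{1}{1+e^{ha}}+\frac{1}{1+e^{hb}}\leq\frac12$ of Balacheff--Merlin, and a case analysis on how the half-lengths $a,b$ of the generators degenerate): the orbit-counting bound $h_{\mathrm{vol}}=\delta(\Gamma)\geq \log 3/D$ for a free rank-two group, combined with the pair-of-pants seam formula, does give a positive lower bound on $h_{\mathrm{vol}}$ whenever $L$ is bounded above and the interior systole $\ell=l_g(\alpha)$ is bounded away from $0$ (modulo the technical points that the short loop produced by the injectivity-radius argument could a priori be boundary-parallel or non-simple, which you should rule out).

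The genuine gap is in the thin-part regime, which is exactly the hard case (the paper calls its analogue ``the most subtle case''). First, the algebraic limit you invoke does not exist: when $\ell_n\to 0$ with $L$ bounded, every generating set of $\pi_1(\Sigma_{1,1})$ must contain an element crossing the collar of $\alpha$, whose width is $\sim 2\log(1/\ell_n)\to\infty$, so the displacement (and trace) of at least one generator blows up and the marked representations diverge in the character variety for every choice of marking and conjugation. What exists is only a geometric (Chabauty) limit, namely a pair-of-pants group with two cusps and a funnel of length $\leq L_0$, and that group is not the image of any algebraic limit of your representations. Second, even granting a limit, ``continuity of the critical exponent'' cannot be cited as a black box here: the limit is not convex cocompact (it has parabolics), and under geometric convergence one only controls orbit counts in balls of each \emph{fixed} radius, which does not pass to the exponential growth rate without a uniform supermultiplicativity that is precisely what degenerates as $\ell_n\to 0$. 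The underlying difficulty is quantitative: for small $\ell$ the entropy is carried by the high powers $\alpha^k$ populating the thin collar, and the bound $\log 3/D$ ignores them entirely; one needs an inequality coupling $h$, $\ell$, and the length of the second generator, which is exactly the role played by the Balacheff--Merlin inequality in the paper's third case. As written, your argument does not close this case.
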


In other terms, to keep the entropy bounded away from 0, we need to bound from above the boundary length. Note the analogy with Section \ref{sec:DenvirMacKay} for the topological entropy. In this setting, the bound on the area is implicit because any hyperbolic metric on the one holed torus has area $2\pi$ by the Gauss-Bonnet formula

\begin{proof}
We rely on Figure~\ref{fig:oneholedtorus}: we choose $\alpha$ and $\beta$ some generators of the fundamental group $\pi_1(\Sigma_{1,1})= \mathbb{F}_2$, for which we denote by $2a$ and $2b$ the lenghts of the respective geodesic realizations once we fix a hyperbolic metric on $\Sigma_{1,1}$. We also denote by $\gamma$ the angle between the lifts of $\alpha$ and $\beta$.

We consider some \textit{heights} associated to $\alpha$ and $\beta$: they are two geodesic curves starting and ending at the boundary and meeting respectively $\alpha$ and $\beta$ orthogonally.

We then construct a fundamental domain for the action of $\pi_1$ on $\mathbb{H}^2$ as in figure \ref{fig:oneholedtorus}: it's a octagon made with (pieces of) lifts of heights and boundary curve. Note that the lifts of $\alpha$ and $\beta$ can be assumed to meet at their midpoints and that $L_1+L_2+L_3+L_4=2L$. The fundamental domain is a union of 4 pentagons with 1 right angle and one angle $\gamma$ or $\pi-\gamma$.

We may also assume that the two remaining angles in each pentagon is also a right angle. Indeed, doing so decreases the lengths of the $L_i$'s and proving the result for 4 right angles pentagons will yield the result.

For further use, we will set that the remaining angle is $\gamma$ in the pentagons containing as a side $L_2$ and $L_4$ and $\pi-\gamma$ in the two other pentagons 

In this case, $h_{\mathrm{vol}}$ and $L$ as functions of $a$, $b$ and $\gamma$. We will use two facts about those functions.

\begin{enumerate}
\item The entropy $h=h_{\mathrm{vol}}$ satisfies the inequality
\[\frac{1}{1+e^{ha}}+\frac{1}{1+e^{hb}}\leqslant \frac{1}{2}.\]
This is proved in \cite{balacheffmerlin}.
\item The boundary length is expressed as
\[\cosh L_2= \sinh a \sinh b - \cosh a \cosh b \cos \gamma.\]
This formula comes from a trigonometric formula in the pentagon with one side $L_2$, similar formulas allow to express the other $L_i$'s. This formula comes from e.g \cite[p. 37 (iii)]{buserspectra}. As $\gamma\to 0$, this formula implies that $a=b\to\infty$.
\end{enumerate}

We now assume that $h\to 0$ and we want to see that one of the $L_i$'s must tend to $\infty$. First notice that $h$ is a continuous function of $a$, $b$ and $\gamma$. So in order to make $h\to 0$, we need to make $a$, $b$ and $\gamma$ escaping compact sets. Note that $\gamma$ cannot tend to $0$ while keeping $a$ and $b$ bounded (unless $L_2\to \infty$ and we are done) so $a$, $b$ and $\gamma$ escape compact sets if and only if $a$ and $b$ escape compact sets. We argue differently depending on how $a$ and $b$ escape compact sets. Without loss of generality, we assume that $a\leqslant b$. Note that two cases are immediatly excluded because of (1): the case $a\to 0$ and $b\to 0$ and the case $a\to 0$ and $b$ bounded.

\textbf{1st case:} $a$ in bounded and $b\to \infty$. Formula (2) implies that
\[\cosh L_2 \sim \sinh a \frac{e^b}{2}-\cosh a \frac{e^b}{2}\cos\gamma.\]
In order to keep $L_2$ bounded, we need that $\cos\gamma\sim \tanh a$. But the same formula applied to the top left pentagon would implies that in order to keep $L_1$ bounded, we need that $\cos\gamma\sim -\tanh a$ and both are not possible simultaneously.

\textbf{2nd case:} $a\to \infty$ and $b\to \infty$. Then formula (2) implies that
\[\cosh L_2 \sim \frac{e^{a+b}}{4}\left(1-\cos\gamma\right).\]
To keep $L_2$ bounded, we need that $\gamma\to 0$ and this is incompatible with keeping $L_1$ bounded.

\textbf{3rd case:} $a\to 0$ and $b\to \infty$. This is the most subtle case and we need to dig more into formula (1). Formula (2) implies already that
\[\cosh L_2 \sim \frac{a}{2}\frac{e^b}{2}-\frac{e^b}{2}\cos \gamma.\;\;\;\mathrm{ and }\;\;\; \cosh L_1 \sim \frac{a}{2}\frac{e^b}{2}+\frac{e^b}{2}\cos \gamma.\]
Keeping both $L_1$ and $L_2$ bounded would imply that $e^ba$ is bounded, or $b\leqslant c-\log a$, where $c$ is a constant.

On the other hand, formula (1) with $a\to 0$ and $b\to \infty$ becomes
\[b\geqslant \frac{1}{h}\log\left(\frac{4}{ha}\right) + o(a)\]
(see \cite{balacheffmerlin}).
Combining the two formulas, we get
\[c-\log a\geqslant \frac{1}{h}\log\left(\frac{4}{ha}\right) + o(a).\]
Reordering, we have,
\[c+\left(\frac{1}{h}-1\right)\log a\geqslant \frac{1}{h}\log\left(\frac{4}{h}\right) + o(a)\]
which is absurd since the left hand side tends to $-\infty$ while the right hand side tends to $+\infty$.

Another, more intuitive, way to analyse this argument would be to remark that $b\to \infty$ is responsible for the fact that $h\to 0$ and conversely $a\to 0$ has the tendency to keep $h$ away from 0. The opposition is settled by the relation $ae^b$ bounded (which means $a$ wins, $a$ and $b$ dont have a symmetric role). Hence it's reasonable to reach a contradiction if we moreover assume that $h\to 0$.
\end{proof}

We conclude this paragraph by an example showing that we cannot bound from below the volume entropy by an absolute constant. Those examples are well-known, we only discuss them for completeness.

\begin{proposition}
There exists a sequence of hyperbolic metrics on the one-holed torus whose volume entropy tends to zero.
\end{proposition}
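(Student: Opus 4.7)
The plan is to exhibit an explicit one-parameter family $\{g_t\}_{t>0}$ of hyperbolic structures on $\Sigma_{1,1}$ whose generator lengths tend to infinity, and then bound the volume entropy by an orbital-counting argument. Using the pentagon decomposition from Figure~\ref{fig:oneholedtorus}, I would take $a = b = t$ and $\gamma = \pi/2$, so that all four right-angled pentagons become pairwise isometric and glue into a bona fide hyperbolic metric $g_t$ on the one-holed torus. Formula $(2)$ of the previous proof specialises to $\cosh(L/2) = \sinh^2 t$, hence $L(t) \sim 4t \to \infty$, which is consistent with the main theorem of the section.

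To show that $h_{\mathrm{vol}}(g_t) \to 0$, I would identify $\pi_1(\Sigma_{1,1}) \simeq F_2$ with the corresponding Fuchsian group $\Gamma_t = \langle A_t, B_t \rangle \subset \mathrm{PSL}(2,\mathbb{R})$ realising $g_t$ and estimate the orbit-counting function $N_R(t) := \#\{g \in \Gamma_t : d_{\mathbb{H}^2}(x_0, g \cdot x_0) \leq R\}$ at a fixed base-point $x_0 \in \mathbb{H}^2$. For $t$ large the octagonal fundamental domain depicted in Figure~\ref{fig:oneholedtorus} has diameter $O(t)$ and its sides are grouped into a Schottky-type pairing; a ping-pong argument then shows that every reduced word of length $n$ in $A_t^{\pm 1}, B_t^{\pm 1}$ moves $x_0$ by at least $2tn - C$, where $C$ is a constant independent of $n$ (of order $O(t)$). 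Counting reduced words yields
\begin{equation*}
N_R(t) \;\leq\; 4 \cdot 3^{(R+C)/(2t)}.
\end{equation*}

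Since for a convex cocompact hyperbolic surface the volume entropy coincides with the exponential growth rate of any orbit of the deck group in the universal cover (equivalently, with the critical exponent of $\Gamma_t$), this bound gives $h_{\mathrm{vol}}(g_t) \leq \log 3 / (2t) + o(1/t)$, which tends to zero as $t \to \infty$.

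The main obstacle is making the ping-pong estimate precise, i.e.\ verifying that the inclusion $(F_2, d_{\mathrm{word}}) \hookrightarrow (\mathbb{H}^2, d_{\mathbb{H}^2})$ is quasi-isometric with multiplicative distortion of order $1/(2t)$. This is classical for Schottky groups, but still requires a brief verification for our particular $\gamma = \pi/2$ configuration via the octagonal fundamental domain. A shorter alternative is to invoke the well-known fact that the critical exponent of a two-generator convex cocompact Fuchsian group tends to zero whenever both translation lengths tend to infinity, and to identify the critical exponent with $h_{\mathrm{vol}}$ via Patterson--Sullivan theory.
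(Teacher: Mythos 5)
Your family of metrics is the same one the paper constructs: two hyperbolic generators whose axes meet orthogonally at a common point and whose translation lengths tend to infinity (your specialisation $a=b=t$, $\gamma=\pi/2$ of the pentagon decomposition describes exactly the surfaces the paper builds by ping-pong in the Poincar\'e disk). The difference is in how the entropy is estimated. The paper passes through Sullivan's identification of $h_{\rm vol}$ with the Hausdorff dimension of the limit set and then computes that dimension from the self-similar structure of the Cantor set, obtaining ${\rm Hdim}(\Lambda(\Gamma_a))=-\log 3/\log r$ with $r\to 0$; you bound the orbit-counting function $N_R$ directly via a ping-pong displacement estimate and identify $h_{\rm vol}$ with the critical exponent. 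Your route is more elementary in that it avoids Patterson--Sullivan theory and fractal dimension computations, at the cost of having to verify the quasi-isometry constants explicitly; both arguments yield the linear decay $h_{\rm vol}(g_t)=O(1/t)$, which is all the proposition needs. Two points worth tightening if you write this up. First, what ping-pong with nested disks actually delivers is the multiplicative estimate $d(x_0,wx_0)\ge n(2t-c_0)$ with $c_0$ independent of $n$ (and bounded for large $t$), rather than the additive form $2tn-C$; either version gives $N_R\le 4\cdot 3^{R/(2t-c_0)}$ and hence the same conclusion, but the former is what the standard argument proves. Second, you should justify in one line that the exponential growth rate of $N_R$ coincides with that of $\Vol B(x,R)$ in the universal cover of the one-holed torus (the convex hull of the limit set, not all of $\mathbb{H}^2$): this holds because the convex core is compact, so each orbit point carries a fundamental domain of volume $2\pi$ and uniformly bounded diameter, making the two counting functions comparable up to bounded multiplicative and additive errors.
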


\begin{proof}
On the Poincaré disk model for the hyperbolic plane, we consider two orthogonal geodesics $A$ and $B$ meeting at the basepoint $o$. We denote by $\alpha$ and $\beta$ the loxodromic isometries whose axis are $A$ and $B$ respectively and with the same translation length denoted $2a$ (the factor 2 makes the computations a bit easier).

A classical ping-pong type argument shows that, when $a$ is big enough, the group $\Gamma_a$ generated by $\alpha$ and $\beta$ is free, the quotient of the disk by $\Gamma_a$ is a torus with a funnel and the convex core of the later is a torus with one boundary component.

We will argue that, as $a$ tends to infinity, the volume entropy of this torus tends to 0. To achieve this computation, we will use the equality between Hausdorff dimension of the limit set $\Lambda(\Gamma_a)$ and volume entropy \cite{sullivan} and actually compute the Hausdorff dimension.

Since the translation lengths of $\alpha$ and $\beta$ are the same, the limit set is a self-similar Cantor and we may compute its dimension for instance with \cite[thm 4.14]{mattilafractals}. Since $\alpha$ is a Lipschitz map on the boundary, we look at "the quarter of the limit set" given by $\alpha(\Lambda(\Gamma_a))$.

The isometries $\alpha$ and $\beta$ both moove the point $o$ to a point along their axis at Euclidean distance $\tanh a$. We deduce that the contraction ratios of $\alpha$ and $\beta$ on $\alpha(\Lambda(\Gamma_a))$ are
\[r_\alpha=r_\beta=r\frac{\arccos\left(1-\frac{\left(1-\tanh a\right)^2}{2}\right)}{\arccos\left(1-\frac{\left(1-\tanh \left(\frac{a}{2}\right)\right)^2}{2}\right)} \underset{a\to\infty}{\sim}\frac{1+e^a}{1+e^{2a}}.\]
(we use the spherical distance on $\alpha(\Lambda(\Gamma_a))$, it is bi-Lipschitz to the Euclidean distance).
Finally the Hausdorff dimension is given by (\cite[thm 4.14]{mattilafractals})
\[\mathrm{Hdim}(\Lambda(\Gamma_a))=\frac{-\log 3}{\log r}\]
which tends to 0 (linearly) as $a\to\infty$.
\end{proof}

\section{Robustness from intersection patterns of a family of non-contractible geodesics on the two-torus}\label{sec:propertyF}

In this section we discuss how a certain intersection pattern of closed (non-contractible) geodesics on $T^2$ implies robustness of topological entropy. Remarkably, and in contrast to the condition discussed in section \ref{sec:DenvirMacKay}, this intersection pattern appears for a $C^{\infty}$-generic metric. In other words we obtain that topological entropy is $C^0$ robust for $C^{\infty}$-generic metrics. 
The content in this section is motivated by the work of Bolotin and Rabinowitz \cite{BolotinRabinowitz} and Glasmachers and Knieper \cite{GlasmachersKnieper} and we use  some of their results. 

\subsection{A definition of separation for lifts of two freely homotopic loops}
Fix a free homotopy class $\alpha$ of loops in $Q$,  $g\in \mathfrak{G}(Q)$. 
Let $\gamma, \gamma'\in \mathcal{L}_{\alpha}Q$ of energy 
 $a = \mathcal{E}_g(\gamma), a' = \mathcal{E}_g(\gamma')$.
Choose two lifts $\widetilde\gamma$ and $\widetilde\gamma':\R \to \widetilde{Q}$ to the universal cover $\widetilde{Q}$ of $Q$.  (This is understood as first lifting $\gamma$ and $\gamma'$ to maps $\R \to Q$ and then lifting to $\widetilde{Q}$.) 


\begin{definition}
Define $b_0  = b_0(\widetilde\gamma, \widetilde\gamma')$ to be the infimum of the numbers $b>0$ such that there is a continuous path in $\mathcal{L}_{\alpha}Q^{<b}_{g}$  from  $\gamma$ to $\gamma'$ that lifts to a path from $\widetilde\gamma$ to $\widetilde\gamma'$.
We define the \textit{separation} of $\widetilde\gamma$ and $\widetilde\gamma'$ to be the non-negative real number 
$\sep_g(\widetilde\gamma, \widetilde\gamma') = \min\{ \log(\frac{b_0}{a}),\log(\frac{b_0}{a'})\} \geq 0$.
	\flushright$\triangle$
\end{definition}


The following robustness statement follows from the definitions.
\begin{lemma}\label{lem:connected_stable}
Let $\delta>0$, and $g, g'$ be two metrics with 
$d_{C^0}(g,g') < \delta$.
Then $\sep_{g'}(\widetilde\gamma, \widetilde\gamma')\leq \sep_g (\widetilde\gamma, \widetilde\gamma')  + 2\delta$. 
\end{lemma}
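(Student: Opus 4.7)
The statement is essentially a bookkeeping consequence of the bi-Lipschitz energy comparison induced by $d_{C^0}$, so the plan is to organise the argument in three short steps.

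First, I would rewrite the hypothesis as an energy comparison. Since $d_{C^0}(g,g')<\delta$, any $\delta'\in (d_{C^0}(g,g'),\delta)$ furnishes (by definition of the infimum) a constant $C>0$ with $\log C<\delta'$ and $\tfrac{1}{C}g\prec g'\prec Cg$, which plugged directly into the definition of $\mathcal{E}$ gives the pointwise bound
\[
e^{-\delta'}\,\mathcal{E}_g(\eta)\ \leq\ \mathcal{E}_{g'}(\eta)\ \leq\ e^{\delta'}\,\mathcal{E}_g(\eta)
\]
for every smooth loop $\eta\in\mathcal{L}Q$.

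Second, I would compare the two auxiliary thresholds $b_0^g$ and $b_0^{g'}$ entering the definition of $\sep$. For any $b>b_0^g(\widetilde\gamma,\widetilde\gamma')$ the definition provides a continuous path in $\mathcal{L}_\alpha Q^{<b}_g$ from $\gamma$ to $\gamma'$ lifting to a path from $\widetilde\gamma$ to $\widetilde\gamma'$. By the energy comparison, the \emph{same} path has $g'$-energy strictly below $e^{\delta'}b$ at every instant and of course still lifts correctly, so it witnesses $b_0^{g'}(\widetilde\gamma,\widetilde\gamma')\leq e^{\delta'}b$. Taking the infimum over $b>b_0^g$ yields $b_0^{g'}\leq e^{\delta'}b_0^g$.

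Third, I would combine this with the trivial estimates $\mathcal{E}_{g'}(\gamma)\geq e^{-\delta'}\mathcal{E}_g(\gamma)$ and the analogous one for $\gamma'$, obtaining
\[
\frac{b_0^{g'}}{\mathcal{E}_{g'}(\gamma)}\ \leq\ e^{2\delta'}\,\frac{b_0^g}{\mathcal{E}_g(\gamma)},
\qquad
\frac{b_0^{g'}}{\mathcal{E}_{g'}(\gamma')}\ \leq\ e^{2\delta'}\,\frac{b_0^g}{\mathcal{E}_g(\gamma')}.
\]
Taking logarithms, then the minimum of the two inequalities, and finally letting $\delta'\searrow d_{C^0}(g,g')$ gives $\sep_{g'}(\widetilde\gamma,\widetilde\gamma')\leq \sep_g(\widetilde\gamma,\widetilde\gamma')+2\,d_{C^0}(g,g')<\sep_g(\widetilde\gamma,\widetilde\gamma')+2\delta$.

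There is no genuine obstacle in this argument; the only mildly delicate point is that $\mathcal{L}_\alpha Q^{<b}_g$ is defined with a \emph{strict} inequality, which is precisely why I would work throughout with an auxiliary $\delta'$ strictly above $d_{C^0}(g,g')$ and pass to the infimum only at the very last step, rather than trying to use the extremal comparison constant directly.
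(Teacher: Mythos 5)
Your proof is correct and follows essentially the same route as the paper's: compare energies via the bi-Lipschitz bound coming from $d_{C^0}$, transport a near-optimal witnessing path from $g$ to $g'$, and pick up a factor $e^{\delta}$ from the path bound and another from the endpoint energies. The only cosmetic difference is that you absorb the approximation into an auxiliary $\delta'>d_{C^0}(g,g')$ while the paper instead chooses a path within $\epsilon$ of the infimum $b_0$ and lets $\epsilon\to 0$; both handle the strict-inequality issue equally well.
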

\begin{proof}
Let $a=\mathcal{E}_g(\gamma)$ and $a'=\mathcal{E}_g(\gamma')$, and $u_s$ a path from $\gamma$ to $\gamma'$ in $\mathcal{L}Q^{<b}_g$ with $\min(\log(\frac{b}{a}), \log(\frac{b}{a'})) < \sep_g(\widetilde\gamma, \widetilde\gamma') + \epsilon$ for some $\epsilon>0$, and which lifts to a path from $\widetilde\gamma$ to $\widetilde\gamma'$. 
Then $\mathcal{E}_{g'}(\gamma) > e^{-\delta}a$, $\mathcal{E}_{g'}(\gamma') > e^{-\delta}a'$, 
and $\mathcal{E}_{g'}(u_s) < e^{\delta}b$ for all $s$.
In other words $\sep_{g'}(\widetilde\gamma, \widetilde\gamma') < \min\{ \log(\frac{e^{2\delta}b}{a}), \log(\frac{e^{2\delta}b}{a'})\}< \sep_g (\widetilde\gamma, \widetilde\gamma')  + 2\delta + \epsilon$, 
and since $\epsilon>0$ was arbitrary the claim follows. 
\end{proof}

\subsection{The two-torus and an intersection pattern}
In the following let $Q= T^2$ be the $2$-torus, equipped with the standard orientation and $\alpha$ a non-trivial free homotopy class of loops in $T^2$. 
If a lift $\widetilde\gamma$ of a closed oriented curve  $\gamma$ representing $\alpha$ is embedded, then it divides the universal cover $\widetilde{T^2}$ in two connected components, the right $R(\widetilde\gamma)$ and the left $L(\widetilde\gamma)$ of $\widetilde\gamma$. 

In the following definition we formulate an intersection pattern of (lifts of) four closed curves of class $\alpha$, see also Figure \ref{fig:F} below.

\begin{definition}\label{def:F}
We say that four oriented closed curves $\gamma_1, \gamma_2, \gamma_3, \gamma_4$ in $T^2$ that represent $\alpha$  form a \textit{ribbon} if 
\begin{enumerate}
\item[(0)] their lifts to $\widetilde{T^2}$ are embedded,  
\end{enumerate}
and if for some choice of lifts $\widetilde{\gamma}_1, \widetilde\gamma_2, \widetilde\gamma_3, \widetilde\gamma_4$ to $\widetilde{T^2}$,
 \begin{enumerate}
\item $\widetilde\gamma_1$ is on the left of $\widetilde{\gamma_3}$ and $\widetilde{\gamma_4}$; $\widetilde\gamma_{4}$ is on the right of $\widetilde\gamma_1$ and $\widetilde\gamma_2$. 
\item $\widetilde\gamma_1$ and $\widetilde\gamma_2$ intersect, $\widetilde\gamma_2$ and $\widetilde\gamma_3$ intersect, and $\widetilde\gamma_3$ and $\widetilde\gamma_4$ intersect, and all intersections are transverse.
\end{enumerate}

Let $\varepsilon>0$. We say that $\gamma_1, \gamma_2, \gamma_3, \gamma_4$ form an \textit{$\varepsilon$-ribbon} (with respect to the metric $g$),   
if in addition to $(0), (1)$, and $(2)$ (which is included in $(4)$ below) the four lifts $\widetilde\gamma_1, \ldots, \widetilde\gamma_4$  satisfy that
\begin{enumerate}\setcounter{enumi}{2}
\item $\sep(\widetilde\gamma_i, \widetilde\gamma_j) \geq \varepsilon$, for all $i,j \in \{1, \ldots, 4\}$, $i\neq j$.
\item two lifts $\widetilde\tau$ and $\widetilde\tau'$ of two closed curves $\tau, \tau'$ of class $\alpha$ intersect whenever they satisfy one of the following
\begin{itemize} 
\item $\sep_g(\widetilde\tau, \widetilde\gamma_1) < \varepsilon$, and $\sep_g(\widetilde\tau', \widetilde\gamma_2) < \varepsilon$.
\item $\sep_g(\widetilde\tau, \widetilde\gamma_2) < \varepsilon$, and $\sep_g(\widetilde\tau', \widetilde\gamma_3) < \varepsilon$. 
\item $\sep_g(\widetilde\tau, \widetilde\gamma_3) < \varepsilon$, and $\sep_g(\widetilde\tau', \widetilde\gamma_4) < \varepsilon$. 
\end{itemize}
\end{enumerate}
	\flushright$\triangle$
\end{definition}

The following Proposition \ref{prop1} states that an $\varepsilon$-ribbon is robust with respect to the $C^0$ topology on the metrics. The main difficulty is to guarantee items $(0)$ and $(1)$ of a ribbon for a perturbed metric. Here results on the analysis of the curve-shortening flow are used \cite{Angenent1, Angenent2, Grayson}.  

\begin{proposition}\label{prop1}
Assume that there are four curves $\gamma_1, \ldots, \gamma_4$ that form an $\varepsilon$-ribbon for some $\varepsilon>0$ with respect to $g$. Let $\delta>0$ with $\varepsilon> 2\delta >0$. 
Then for any metric $g'$ with $d_{C^0}(g',g) < \delta$ there are four closed geodesics $\gamma'_1, \ldots, \gamma'_4$ that form an $(\varepsilon - 2\delta)$-ribbon with respect to $g'$. 
\end{proposition}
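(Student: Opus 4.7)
The plan is to produce the four geodesics $\gamma'_1,\dots,\gamma'_4$ by applying the $g'$-curve-shortening flow (CSF) to each $\gamma_i$, and then to verify the conditions $(0)$--$(4)$ in the definition of an $(\varepsilon-2\delta)$-ribbon. Since each $\gamma_i$ has an embedded lift, it is a simple non-contractible closed curve on $T^2$, and by Grayson's theorem (and its extension to surfaces by Angenent) the $g'$-CSF starting at $\gamma_i$ exists for all times and converges to a simple closed $g'$-geodesic $\gamma'_i$ in the same free homotopy class $\alpha$. Lifting the flow yields a continuous family $\widetilde\gamma_i(t)$ in $\widetilde{T^2}$ starting at the prescribed $\widetilde\gamma_i$ and converging to a distinguished lift $\widetilde\gamma'_i$ of $\gamma'_i$. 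Property $(0)$ is then immediate.

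For property $(1)$ I would invoke Angenent's intersection theory for the CSF: two curves evolving independently under CSF cannot gain new intersections, since at a hypothetical first touching time the maximum principle for the associated quasilinear parabolic equation would produce a contradiction. Lifted to $\widetilde{T^2}$ this implies that if $\widetilde\gamma_1$ is initially disjoint from $\widetilde\gamma_3$ and $\widetilde\gamma_4$, the evolutions stay disjoint at all times, and hence $\widetilde\gamma'_1$ is still on the left of $\widetilde\gamma'_3$ and $\widetilde\gamma'_4$; the analogous statement for $\widetilde\gamma_4$ follows in the same way.

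The core quantitative point is that $g'$-CSF decreases $g'$-energy, so the family $\widetilde\gamma_i(t)$ is a path from $\widetilde\gamma_i$ to $\widetilde\gamma'_i$ all of whose members have $g'$-energy at most $\mathcal{E}_{g'}(\gamma_i)$; combined with $\mathcal{E}_{g'}(\gamma'_i)\leq \mathcal{E}_{g'}(\gamma_i)$ this forces $\sep_{g'}(\widetilde\gamma_i,\widetilde\gamma'_i)=0$. A concatenation argument using these CSF paths then yields both $\sep_{g'}(\widetilde\gamma'_i,\widetilde\gamma'_j)\geq \sep_{g'}(\widetilde\gamma_i,\widetilde\gamma_j)$ and, for any loop $\widetilde\tau$ in class $\alpha$, $\sep_{g'}(\widetilde\tau,\widetilde\gamma_i)\leq \sep_{g'}(\widetilde\tau,\widetilde\gamma'_i)$. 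Combining the first inequality with Lemma~\ref{lem:connected_stable} applied with the roles of $g$ and $g'$ swapped gives $\sep_{g'}(\widetilde\gamma'_i,\widetilde\gamma'_j)\geq \sep_g(\widetilde\gamma_i,\widetilde\gamma_j)-2\delta\geq \varepsilon-2\delta$, which is property $(3)$.

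Properties $(2)$ and $(4)$ of the new ribbon then follow from property $(4)$ of the original via the same two ingredients. For $(2)$, Lemma~\ref{lem:connected_stable} gives $\sep_g(\widetilde\gamma'_i,\widetilde\gamma_i) \leq \sep_{g'}(\widetilde\gamma'_i,\widetilde\gamma_i) + 2\delta = 2\delta < \varepsilon$, so condition $(4)$ of the original ribbon applied with $\widetilde\tau = \widetilde\gamma'_i$ and $\widetilde\tau' = \widetilde\gamma'_{i+1}$ forces the consecutive $\widetilde\gamma'_i$ to intersect; the intersections are automatically transverse since two distinct geodesics cannot be tangent. For $(4)$ of the new ribbon, if $\widetilde\tau,\widetilde\tau'$ satisfy the separation hypothesis with threshold $\varepsilon-2\delta$ relative to $g'$, the concatenation estimate $\sep_{g'}(\widetilde\tau,\widetilde\gamma_i)\leq \sep_{g'}(\widetilde\tau,\widetilde\gamma'_i)$ together with Lemma~\ref{lem:connected_stable} shows the corresponding $g$-separations with respect to $\widetilde\gamma_i$ and $\widetilde\gamma_j$ are strictly less than $\varepsilon$, reducing the assertion to condition $(4)$ for the original $\varepsilon$-ribbon. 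The main obstacle I anticipate is the lift-sensitive part of $(1)$: one needs Angenent's maximum-principle disjointness to be genuinely applied to the particular pair of lifts in $\widetilde{T^2}$ rather than only to the projected curves on $T^2$, and to verify that the lifted one-parameter family really converges to the correct lift $\widetilde\gamma'_i$ of the limit geodesic.
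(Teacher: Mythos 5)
Your strategy is the same as the paper's: run the $g'$-curve-shortening flow starting from the $\gamma_i$, use that the flow decreases energy to get paths from $\widetilde\gamma_i$ to $\widetilde\gamma_i'$ witnessing $\sep_{g'}(\widetilde\gamma_i,\widetilde\gamma_i')=0$ and $\sep_{g'}(\widetilde\gamma_i',\widetilde\gamma_j')\geq \sep_{g'}(\widetilde\gamma_i,\widetilde\gamma_j)$, combine with Lemma~\ref{lem:connected_stable} for property $(3)$, use Angenent's non-creation of intersections for $(0)$ and $(1)$, and reduce $(2)$ and $(4)$ to property $(4)$ of the original $\varepsilon$-ribbon by concatenating the flow paths. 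So the architecture is right.

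There is, however, one genuinely false step: the assertion that ``since each $\gamma_i$ has an embedded lift, it is a simple closed curve on $T^2$.'' Condition $(0)$ only says that \emph{one} lift $\widetilde\gamma_i$ (a $\mathcal{T}$-invariant bi-infinite curve in $\widetilde{T^2}$) is embedded; the projected curve $\gamma_i$ on $T^2$ can still self-intersect, because $\widetilde\gamma_i$ may meet its translates under deck transformations outside $\langle\mathcal{T}\rangle$. Since Grayson's long-time existence and convergence theorem, and Angenent's intersection-theoretic statements in the form you invoke them, are for embedded curves, your appeal to them on $T^2$ does not go through as written. The paper's proof repairs exactly this by running the flow not on $T^2$ but on the quotient annulus $\widetilde{T^2}/\mathcal{T}$, where the projections of the chosen lifts \emph{are} embedded closed curves; one then confines the evolution to a compact sub-annulus with geodesic boundary (so the non-contractible curves converge to embedded geodesics rather than escaping or collapsing), and only afterwards projects to $T^2$ and lifts back to $\widetilde{T^2}$. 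This also resolves the ``lift-sensitive'' worry you flag at the end: disjointness of the evolving curves is established in the annulus, where it is equivalent to the statement for the distinguished lifts, and the lifted family automatically ends at the correct lift $\widetilde\gamma_i'$ by continuity. With the flow set up on the annulus, the rest of your argument matches the paper's and is fine; one further small point worth recording is that the energy monotonicity you use requires reparametrizing the flow curves by constant speed, since the curve-shortening flow decreases length but not a priori the energy of an arbitrary parametrization.
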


\begin{proof}
The proof uses essentially the analysis of the curve shortening flow. 
Consider the space of embedded closed smooth curves $\Gamma = \{\gamma \in \mathcal{L}Q \, | \, \gamma \text{ embedded}\}$. The curve shortening flow is a continuous local semi-flow $\Phi^t: \Gamma \to \Gamma$,
$\Phi^t(\gamma_0) = \gamma_t$ for $t\in [0,T_{\gamma_0})$, defined by $\frac{\partial \Phi}{\partial t} =k_t N_t$, where $k_t$ is the geodesic curvature of $\gamma_t$ and $N_t$ the unit normal vector. We need the following properties (see \cite{Angenent3}): Mutually non-intersecting curves $\gamma_0$ and $\gamma'_0$ stay non-intersecting along the flow \cite{Angenent1, Angenent2}. Assume that $Q$ is compact with geodesic boundary, then for $\gamma \in \Gamma$, either the maximal $T_{\gamma}$ is finite and $\Phi(\gamma)$ converges to a point, or  $T_{\gamma} = +\infty$ and $\Phi(\gamma)$ converges to a geodesic \cite{Grayson}.
The length is decreasing under $\Phi^t$. 

 Let now $\varepsilon>0$ and $\gamma_1,\ldots, \gamma_4$ be the four curves representing $\alpha$ that form an $\varepsilon$-ribbon with respect to $g$ for lifts $\widetilde{\gamma}_1, \ldots, \widetilde\gamma_4$, and let $\delta>0$ and $g'$ as in the proposition. Let $\mathcal{T}$ be the deck transformation corresponding to $\alpha$ on the universal covering $\widetilde{T^2}$ of $T^2$.
 The quotient $\widetilde{T^2}/\mathcal{T}$ by the action of $\mathcal{T}$ is an annuls, and we apply the curve shortening flow on $\widetilde{T^2}/\mathcal{T}$ with respect to the metric $g'$ starting with the projection on $\widetilde{T^2}/\mathcal{T}$  of the four lifts. The images of the curve shortening flow of these curve stay inside an  annulus with geodesic boundary in $\widetilde{T^2}/\mathcal{T}$.  By the properties mentioned above,  the flow will converge to four embedded geodesics in $\widetilde{T^2}/\mathcal{T}$ that project to four geodesics $\gamma_1', \ldots, \gamma'_4$ in $(T^2, g')$. 
  Moreover, the path of curves given by the curve shortening flow lift to paths from $\widetilde\gamma_i$ to $\widetilde{\gamma}'_i$, $i=1,2,3,4$. These four curves and their lifts form an $(\varepsilon-2\delta)$-ribbon. Properties (0) and (1) follow immediately from the properties of the curve shorting flow. Now, reparametrize uniformly by arc-length without increasing the energy along the four paths of closed curves given by the curve shorting flow, so that also the energy decreases along the obtained path, in particular the energy will be bounded from above by the energy of the starting curve. 
 Hence for $i\neq j$,  $\sep_{g'}(\widetilde\gamma'_i,\widetilde\gamma'_j)\geq \sep_{g'}(\widetilde\gamma_i,\widetilde\gamma_j)$. 
By Lemma \ref{lem:connected_stable}, $\sep_{g'}(\widetilde\gamma_i,\widetilde\gamma_j) \geq \sep_g(\widetilde\gamma_i,\widetilde\gamma_j) - 2\delta \geq \varepsilon- 2\delta$.   
	
  To see property $(4)$, consider two closed curves $\tau$ and $\tau'$  with lifts $\widetilde\tau$ and $\widetilde\tau'$ to $\widetilde{T^2}$ such that $\sep_{g'}(\widetilde\tau, \widetilde\gamma'_1) < \epsilon -2\delta$  and 
	$\sep_{g'}(\widetilde\tau', \widetilde\gamma'_2) < \varepsilon -2\delta$. 
	Since for every $b>\mathcal{E}_g'(\gamma_1)$ there is a path from $\gamma_1$ to $\gamma'_1$ in $\mathcal{L}_{\alpha}Q^{<b}_{\alpha, g'}$,
		$\sep_{g'}(\widetilde\tau, \widetilde\gamma_1) <\varepsilon- 2\delta$, and hence
	$\sep_{g}(\widetilde\tau, \widetilde\gamma_1) <\varepsilon$. Similarly, 	$\sep_{g}(\widetilde\tau', \widetilde\gamma_2) <\varepsilon$.  Therefore, $\widetilde\tau$ and $\widetilde\tau'$ intersect. Similarly one checks the remaining cases of property $(4)$.   
		
\end{proof}

\subsection{Robustness of entropy via ribbons}

We will see that the intersection pattern of geodesics on $T^2$ considered above imply that the metric has robust topological entropy.



\begin{theorem}\label{thm1}
If $(T^2,g)$ admits four closed geodesics  $\gamma_1, \ldots, \gamma_4$ that form a ribbon, then the topological entropy of the geodesic flow $\varphi_g$ is positive. Moreover, the topological entropy is bounded from below by $\frac{1}{L}\log(2)$, where $L= \min\{l(\gamma_1) + l(\gamma_2), l(\gamma_3) + l(\gamma_4)\}$. 
\end{theorem}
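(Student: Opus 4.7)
The plan is to construct, for each large $n$, a family of $2^n$ closed orbits of the geodesic flow of length at most $n(l(\gamma_1) + l(\gamma_2))$ that are pairwise dynamically separated in the unit tangent bundle, yielding $h_{\rm top}(\varphi_g) \geq \log(2)/(l(\gamma_1) + l(\gamma_2))$; the symmetric argument applied to the pair $(\gamma_3, \gamma_4)$ yields the corresponding bound, and taking the better of the two gives the theorem.

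Fix an intersection point $p \in \gamma_1 \cap \gamma_2$, so that the tangent vectors $v_1 = \dot\gamma_1(0)$ and $v_2 = \dot\gamma_2(0)$ in $T^1_p T^2$ are distinct by transversality, and choose lifts $\widetilde\gamma_1, \widetilde\gamma_2 \subset \widetilde{T^2}$ meeting at a lift $\tilde p$ of $p$. Since the lifts are $\mathcal{T}$-invariant, every $\tilde p + k\alpha$ with $k \in \Z$ lies in $\widetilde\gamma_1 \cap \widetilde\gamma_2$ as well. For each binary word $w = (w_1, \dots, w_n) \in \{1,2\}^n$, I associate the piecewise geodesic loop
\[
\sigma_w := \gamma_{w_1} \ast \gamma_{w_2} \ast \cdots \ast \gamma_{w_n}
\]
based at $p$, whose lift starting at $\tilde p$ traverses, during its $k$-th leg, the embedded arc of $\widetilde\gamma_{w_k}$ from $\tilde p + (k-1)\alpha$ to $\tilde p + k\alpha$ and ends at $\tilde p + n\alpha$; the total length satisfies $\ell_g(\sigma_w) \leq n(l(\gamma_1) + l(\gamma_2))$. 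Since $\widetilde\gamma_1$ and $\widetilde\gamma_2$ are embedded and coincide only on a discrete set, and because the picture is $\mathcal{T}$-periodic, the two distinct arcs joining consecutive intersection points remain at a uniform positive Hausdorff distance $\delta_0 > 0$. Hence two words $w \neq w'$ that first disagree at position $k$ yield lifts of $\sigma_w$ and $\sigma_{w'}$ that are $\delta_0$-separated at some moment during the $k$-th leg, and by the transversality of $v_1, v_2$ this separation can be upgraded to a uniform separation $\delta > 0$ in $T^1 \widetilde{T^2}$.

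It remains to upgrade the $2^n$ piecewise geodesic concatenations $\sigma_w$ to $2^n$ honest closed geodesic orbits that inherit this separation, and this is where the full four-curve ribbon structure is indispensable. Concretely, one minimizes energy within a subset of the free loop space of $T^2$ which encodes the combinatorial data of $w$: the additional ribbon curves $\gamma_3, \gamma_4$, whose lifts do not meet $\widetilde\gamma_1$ (respectively do not meet $\widetilde\gamma_1$ or $\widetilde\gamma_2$), act as topological barriers that confine each minimizer to the class determined by $w$ and prevent collapses between distinct words; this yields $2^n$ honest closed geodesics $\delta_w$ of length at most $n(l(\gamma_1) + l(\gamma_2))$, and their initial tangent vectors $\dot\delta_w(0) \in T^1 T^2$ form a $(\delta, n(l(\gamma_1) + l(\gamma_2)))$-separated set in the Bowen metric. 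Definition~\ref{def:topologicalEntropy} then gives
\[
h_{\rm top}(\varphi_g) \geq \limsup_{n \to \infty} \frac{\log(2^n)}{n(l(\gamma_1) + l(\gamma_2))} = \frac{\log 2}{l(\gamma_1) + l(\gamma_2)},
\]
and the same argument for $(\gamma_3, \gamma_4)$ followed by the minimum gives the bound $\log(2)/L$. The main obstacle is precisely the variational upgrade: the naive free homotopy classes $[\sigma_w]$ all agree with $n\alpha \in \pi_1(T^2)$, so distinguishing them requires exploiting the topological rigidity provided by the full ribbon rather than merely by the intersecting pair; this is the step in which the ribbon hypothesis --- beyond the mere existence of one transverse intersecting pair --- is genuinely used.
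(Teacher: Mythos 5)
Your overall template is the right one --- encode binary words of length $n$ into $2^n$ closed geodesics of length at most $nL$, show they are $(\delta,nL)$-separated, and read off $h_{\topo}\geq \log(2)/L$ --- and this is indeed the shape of the paper's argument. But the step you flag as ``the main obstacle'' is not a technical upgrade; it is the entire content of the theorem, and the encoding you propose does not support it. All of your concatenations $\sigma_w=\gamma_{w_1}\ast\cdots\ast\gamma_{w_n}$ represent the same class $n\alpha$, and the lifts $\widetilde\gamma_3,\widetilde\gamma_4$ do not act as barriers separating them: both $\widetilde\gamma_1$ and $\widetilde\gamma_2$ lie entirely on the left of $\widetilde\gamma_4$ (and $\widetilde\gamma_2$ actually \emph{crosses} $\widetilde\gamma_3$), so there is no relative homotopy class, defined by forbidding intersections with $\gamma_3$ or $\gamma_4$, in which two words $w\neq w'$ become non-homotopic. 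Absent a genuine constraint, energy minimization can collapse all $2^n$ models to the single minimal geodesic in class $n\alpha$, and your separation estimate for the piecewise-geodesic models $\sigma_w$ says nothing about the minimizers. So as written the proof does not go through.

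The mechanism the paper uses is different in a way that matters. The binary choice at period $i$ is not ``which of $\gamma_1,\gamma_2$ the loop follows'' but ``which of two overlapping bigon regions the loop is forbidden to enter'': one takes $U_0$ a component of $R(\widetilde\gamma_1)\cap L(\widetilde\gamma_2)$ and $U_1$ a component of $L(\widetilde\gamma_4)\cap R(\widetilde\gamma_3)$ with $U_0\cap U_1\neq\emptyset$ (this is exactly where all four curves and all three intersection conditions of the ribbon are used), and for a periodic word $\mathfrak{a}$ minimizes length in class $p\alpha$ among curves with a lift in $D(\mathfrak{a})=S\setminus\bigcup_i\mathcal{T}^iU_{a_i}$, where $S=R(\widetilde\gamma_1)\cap L(\widetilde\gamma_4)$. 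This makes the minimization well-posed and word-dependent; the length bound $pL$ comes from the fact that $\partial D(\mathfrak{a})$ contains competitor curves in class $p\alpha$ built from arcs of $\gamma_1,\gamma_2$ \emph{and} others built from arcs of $\gamma_3,\gamma_4$, which is how the minimum $L=\min\{l(\gamma_1)+l(\gamma_2),\,l(\gamma_3)+l(\gamma_4)\}$ appears in a single argument rather than via your two symmetric runs; and the Bowen separation of the resulting geodesics is certified by disjoint compact sets $v_0\subset U_0\setminus U_1$, $v_1\subset U_1\setminus U_0$ that the geodesics must thread depending on the letter, together with a covering argument (with a polynomial factor $kt_0^2$) to pass from spatial separation in $\widetilde{T^2}$ to $(\varepsilon,pL)$-separation for the flow on $T^1T^2$ --- another point you assert (``inherit this separation'') without justification. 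To repair your proof you would need to supply, at minimum, the constraint sets and the existence/length/separation statements for the constrained minimizers.
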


\begin{proof}
See  \cite[Lemma 4.2]{GlasmachersKnieper} for a similar argument. 
In the following consider the strip $S = R(\widetilde\gamma_1) \cap L(\widetilde\gamma_4) \subset \widetilde{T^2}$.
By our assumptions, we can choose  $U_0 \subset S$ to be a connected component of $R(\widetilde\gamma_1) \cap L(\widetilde\gamma_2)$   and $U_1 \subset \widetilde{T^2}$ a connected component of  $L(\widetilde\gamma_4) \cap R(\widetilde\gamma_3)$ such that $U_1\cap U_2 \neq \emptyset$. 
Let $\mathcal{T}\neq id$ be the covering transformation corresponding to free homotopy class $\alpha$ of the geodesics $\gamma_1, \ldots, \gamma_4$. 
Note that by the assumptions, for any $i,j \in \Z$, $i\neq j$,  $\mathcal{T}^i U_1 \cap \mathcal{T}^jU_1 =\emptyset$, and $\mathcal{T}^i U_2 \cap \mathcal{T}^j U_2 = \emptyset$. 
For any bi-infinite sequence $\mathfrak{a}= (a_i)_i \in \Z$, $a_i \in \{0,1\}$, consider the set $D(\mathfrak{a}) = S \setminus \bigcup_{i\in\Z} \mathcal{T}^i U_{a_i}$.  For any periodic $\mathfrak{a}$ of period $p$ we choose a closed curve $\gamma$ in class $p\alpha$ which has a lift in $D(\mathfrak{a})$. We can assume that $\gamma$ is a geodesic,  e.g. by applying Lemma \ref{lem:annulus} below,
and has minimal energy, and hence also minimal length, among such curves. In particular, the  length of $\gamma$ is bounded from above by $pL$, where $L= \min\{l(\gamma_1) + l(\gamma_2), l(\gamma_3) + l(\gamma_4)\}$. This follows since there exist both, a closed curve in class $p\alpha$ of length smaller than $p(l(\gamma_1) + l(\gamma_2))$, and one  of length smaller than  $p(l(\gamma_3) + l(\gamma_4))$ whose lifts are contained in the boundary of $D(\mathfrak{a})$.

These geodesics $\gamma$ provide us with separating sets for the geodesic flow of $g$: Let $u$ be compact connected set in $\overline{S} \setminus \bigcup_{i\in\Z} \mathcal{T}^i (U_0 \cap U_1)$  such that $\overline{S} \setminus u$ has two components for which one contains the sets  $\mathcal{T}^i(U_0 \cap U_1), i\leq 0$, and the other the sets $\mathcal{T}^i(U_0 \cap U_1), i>0$. Furthermore, choose two disjoint compact connected sets $v_0 \in U_0 \setminus U_1$ and $v_1 \in U_1 \setminus U_0$ such that $v_0 \cup (U_0 \cap U_1) \cup v_1$ also divide $\overline{S}$ into two components. 
Lift the geodesic flow of $(T^2,g)$ to a flow on $T^1\widetilde{T^2}$ and denote it by $\widetilde{\varphi}_g$, i.e., $\widetilde{\varphi}_g$ is the geodesic flow of $(\widetilde{T^2},\widetilde{g})$ of the lifted metric $\widetilde{g}$. Let $P:T^1\widetilde{T^2} \to T^1{T}^2$ the covering map induced by the universal covering $\widetilde{T^2} \to T^2$. Furthermore, denote by $\widetilde{u}$, $\widetilde{v_0}$, and $\widetilde{v_1}$ the lifts to $T^1\widetilde{T^2}$ of $u,v_0$, and $v_1$, respectively. 
It is easy to see that there is a constant $k>0$, such that for every $t_0>0$ there is a covering of $\bigcup_{t\in [0,t_0]} \widetilde{\varphi}^t_g(\widetilde{u})$ of less than  $kt_0^2$ open sets such that for any of the sets $B$ of the covering and any $x,y \in B$, $d_{\widetilde{g}}(x,y) = d_g(P(x), P(y))$. 

By the discussion above, for any $p$-periodic binary word $\mathfrak{a}=(a_i)_{i\in \Z}$ there is a lift  $\widetilde\gamma$  of a closed geodesic in $(T^2,g)$ of length $\leq pL$ that intersects $u$, that intersects $\mathcal{T}^i(v_j)$ if and only if $a_i = j$, and that intersects $\mathcal{T}^i(U_j)$ if and only if $a_i \neq j$. 
Hence for $\varepsilon$ sufficiently small there is a set $X \subset \widetilde{u}$ of at least $2^{p}$ points such that for all $x,y \in X$, $\sup_{t\in[0,pL]} d_{\widetilde{g}}\left(\widetilde{\varphi}^t_{{g}}(x),\widetilde{\varphi}^t_{{g}}(y)\right)\geq \varepsilon$.
Hence there is an $(\varepsilon,pL)$-separated set of $\varphi_g$ of cardinality at least $2^p/(k(pL)^2)$, and therefore  
$$h_{\topo}(\varphi_g) \geq \limsup_{p\to \infty} \frac{\log(2^p/(k(pL)^2))}{pL} =\frac{1}{L}\log(2).$$

\end{proof}

Theorem \ref{thm1} together with Proposition \ref{prop1} show that the existence of four curves that form an $\varepsilon$-ribbon implies that the topological entropy is positive in a $C^0$ neighborhood of $g$. Naturally, the intersection pattern of a ribbon might not be robust. Nonetheless, the next result asserts that the existence of four geodesics that form a ribbon implies another collection of four geodesics that form an $\varepsilon$-ribbon  for some $\varepsilon>0$, and hence the topological entropy is indeed positive in a $C^0$ neighborhood of $g$.



\begin{theorem}\label{thm2}
If there are four geodesics $\gamma_1, \ldots, \gamma_4$ that form a ribbon for a metric $g$, then there is $\varepsilon>0$ and four geodesics $\eta_1, \ldots, \eta_4$ that form an $\varepsilon$-ribbon for the metric $g$. 
\end{theorem}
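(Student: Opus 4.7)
The plan is to replace the four given geodesics $\gamma_i$ by length-minimizing representatives $\eta_i$ in their respective basins, and then read off an $\varepsilon>0$ for which conditions $(3)$ and $(4)$ of Definition~\ref{def:F} hold, using the Palais--Smale deformation theory for the energy functional on the loop space.

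\smallskip
\textbf{Step 1 (construction of $\eta_i$).} Running the curve-shortening flow on the cylindrical cover $\widetilde{T^2}/\langle\mathcal T\rangle$ starting from $\gamma_1,\ldots,\gamma_4$, exactly as in the proof of Proposition~\ref{prop1}, we obtain four embedded closed $g$-geodesics $\eta_1,\ldots,\eta_4$ together with lifts $\widetilde\eta_i$ obtained by lifting the shortening paths from $\widetilde\gamma_i$. Because the flow preserves embeddings, non-intersections, and the left/right partial order of lifts on the cylinder, the $\widetilde\eta_i$ still satisfy ribbon conditions $(0)$, $(1)$ and $(2)$. Passing, if necessary, to a local $W^{1,2}$ energy minimizer within the same component of embedded representatives, we may assume each $\widetilde\eta_i$ is an isolated critical $S^1$-orbit of the lifted energy functional.

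\smallskip
\textbf{Step 2 (condition $(3)$).} The energy $\mathcal E_g$ on the $W^{1,2}$-completion of $\mathcal L_\alpha T^2$ satisfies the Palais--Smale condition, so the critical set within any bounded energy window is compact. Consequently for each $i$ there is $\delta_i>0$ so that the connected component of $\widetilde\eta_i$ in the lifted sublevel set $\mathcal L_\alpha\widetilde{T^2}^{<\mathcal E_g(\eta_i)+\delta_i}$ contains no other $\widetilde\eta_j$. Taking $\varepsilon_0>0$ small enough forces $\sep_g(\widetilde\eta_i,\widetilde\eta_j)\geq\varepsilon_0$ for all $i\neq j$, which is $(3)$.

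\smallskip
\textbf{Step 3 (condition $(4)$).} Suppose $(4)$ fails for every $\varepsilon>0$. Then there are sequences $\varepsilon_n\downarrow 0$, adjacent indices $i,i+1$, and closed curves $\tau_n,\tau'_n\in\mathcal L_\alpha T^2$ with lifts $\widetilde\tau_n,\widetilde\tau'_n$ such that
\[
\sep_g(\widetilde\tau_n,\widetilde\eta_i)<\varepsilon_n,\qquad \sep_g(\widetilde\tau'_n,\widetilde\eta_{i+1})<\varepsilon_n,\qquad \widetilde\tau_n\cap\widetilde\tau'_n=\emptyset.
\]
The first inequality forces $\mathcal E_g(\tau_n)\to\mathcal E_g(\eta_i)$ and places $\widetilde\tau_n$ in the component of $\widetilde\eta_i$ in a sublevel set whose height tends to $\mathcal E_g(\eta_i)$. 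By Palais--Smale deformation and the isolatedness of $\widetilde\eta_i$ from Step~1, this component deformation retracts onto the $S^1$-orbit of $\widetilde\eta_i$, so $\widetilde\tau_n\to\widetilde\eta_i$ in $W^{1,2}$ (up to reparametrization), hence in $C^0$ by the Sobolev embedding $W^{1,2}(S^1)\hookrightarrow C^0(S^1)$. The same argument gives $\widetilde\tau'_n\to\widetilde\eta_{i+1}$ in $C^0$. But $\widetilde\eta_i$ and $\widetilde\eta_{i+1}$ intersect transversally by ribbon condition $(2)$, and transverse intersections persist under $C^0$-small perturbations, so $\widetilde\tau_n\cap\widetilde\tau'_n\neq\emptyset$ for $n$ large, a contradiction. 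Choosing $\varepsilon$ smaller than the $\varepsilon_0$ from Step~2 and than the threshold produced here gives the required $\varepsilon$-ribbon.

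\smallskip
\textbf{Main obstacle.} The delicate point is the isolatedness assumed for $\widetilde\eta_i$: a priori the critical set containing $\widetilde\eta_i$ could be a higher-dimensional Morse--Bott orbit of closed geodesics at the same energy level, in which case Step~3 only gives $C^0$-convergence of $\widetilde\tau_n$ to some element of this critical manifold, not to $\widetilde\eta_i$ itself. One then has to argue that every member of this Morse--Bott manifold still intersects the adjacent $\widetilde\eta_{i\pm 1}$ transversally; this is automatic when $g$ is bumpy and, in general, can be enforced by choosing the $\eta_i$ suitably inside the critical orbit together with the openness of transverse intersection on a compact critical manifold.
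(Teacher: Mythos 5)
Your strategy is genuinely different from the paper's, and unfortunately it has several gaps that are not repairable within the framework you set up.

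First, Step 1 already fails: the curve shortening flow does \emph{not} preserve ribbon condition $(2)$. The number of intersections between two evolving curves is non-increasing and drops precisely at tangencies, so the limits $\widetilde\eta_1,\widetilde\eta_2$ (and likewise $\widetilde\eta_2,\widetilde\eta_3$ and $\widetilde\eta_3,\widetilde\eta_4$) may well be disjoint. In the proof of Proposition~\ref{prop1} this is ruled out only because one starts from an $\varepsilon$-ribbon and can invoke the quantitative condition $(4)$ along the shortening paths; in Theorem~\ref{thm2} you start from a bare ribbon and have no such tool. Second, Step 3 is broken at its first claim: since $\sep_g$ is the \emph{minimum} of $\log(b_0/a)$ and $\log(b_0/a')$, the hypothesis $\sep_g(\widetilde\tau_n,\widetilde\eta_i)<\varepsilon_n$ does \emph{not} force $\mathcal E_g(\tau_n)\to\mathcal E_g(\eta_i)$; a curve $\tau_n$ of arbitrarily large energy can have tiny separation from $\eta_i$ (it only needs a connecting path staying below $e^{\varepsilon_n}\mathcal E_g(\tau_n)$). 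Condition $(4)$ quantifies over \emph{all} such curves, so no Palais--Smale compactness or convergence-to-the-critical-orbit argument can establish it; and even in the bounded-energy regime, lying in a component of a sublevel set that retracts onto the critical orbit does not give $C^0$-proximity to that orbit. Third, the ``main obstacle'' you flag is real and unresolved: if two of your $\eta_i$ sit on a connected family of geodesics of equal energy, then $b_0=a=a'$ and their separation is zero for every choice of representative inside the family, so no ``suitable choice inside the critical orbit'' can save condition $(3)$.

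The paper avoids all of this by not attempting to upgrade the given geodesics at all. It feeds the ribbon into the symbolic machinery of Theorem~\ref{thm1}: for the four period-$3$ binary words $110$, $011$, $100$, $010$ it forms the strips $D(\mathfrak a^i)$, which descend to annuli $A_i$ in $\widetilde{T^2}/\mathcal T^3$ with piecewise-geodesic boundary having outer angles $>\pi$, and takes $\eta_i$ to be an energy minimizer of class $3\alpha$ in $A_i$ (choosing $A_3,A_4$ using $R(\widetilde\eta_1)$, $R(\widetilde\eta_2)$ to secure the left/right conditions). Lemma~\ref{lem:annulus} provides a definite multiplicative gap $e^{\varepsilon_i}$ between the interior minimum and the energy of any competitor touching $\partial A_i$; this gap, not any nondegeneracy of the geodesics, is what yields condition $(3)$, and the combinatorics of the overlapping strips force the intersections of condition $(4)$ for every curve trapped below the gap. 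If you want a working proof, that confinement-by-energy-gap mechanism is the idea your argument is missing.
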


From Theorem \ref{thm2}, \ref{thm1} and Proposition \ref{prop1} it follows that
\begin{corollary}
If there are four geodesics $\gamma_1, \ldots, \gamma_4$ that form a ribbon for a metric $g$, then there $\delta>0$ such that for all $g'$ with $d_{C^0}(g',g) < \delta$, the topological entropy of the geodesic flow $\varphi_g'$ of $g'$ is positive.   
\end{corollary}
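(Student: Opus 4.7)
The corollary is a direct combination of the three preceding results, so the plan is essentially to chain them together in the right order with the right choice of constants. My first step will be to invoke Theorem~\ref{thm2} on the hypothesized ribbon $\gamma_1,\ldots,\gamma_4$ of the metric $g$. This produces some $\varepsilon>0$ and four (possibly different) closed $g$-geodesics $\eta_1,\ldots,\eta_4$ that form an $\varepsilon$-ribbon with respect to $g$. The upgrade from ``ribbon'' to ``$\varepsilon$-ribbon'' is the crucial quantitative step: it replaces the purely combinatorial/intersection-theoretic data of condition~(0)--(2) in Definition~\ref{def:F} with the separation bounds (3) and the robust intersection property (4), which are the data that can be propagated under $C^0$-perturbation.

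Next I will choose $\delta>0$ satisfying $2\delta<\varepsilon$, e.g.\ $\delta=\varepsilon/3$. For any $g'\in\mathfrak{G}(T^2)$ with $d_{C^0}(g',g)<\delta$, Proposition~\ref{prop1} applies directly to the $\varepsilon$-ribbon $\eta_1,\ldots,\eta_4$ and yields four closed $g'$-geodesics $\eta'_1,\ldots,\eta'_4$ that form an $(\varepsilon-2\delta)$-ribbon with respect to $g'$. In particular $\varepsilon-2\delta>0$, so these four geodesics satisfy conditions~(0)--(2) of Definition~\ref{def:F}, i.e., they form a ribbon for $g'$.

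Finally I will apply Theorem~\ref{thm1} to the metric $g'$ and the four geodesics $\eta'_1,\ldots,\eta'_4$. This gives
\[
h_{\rm top}(\varphi_{g'})\;\geq\;\frac{\log 2}{L(g')},
\]
where $L(g')=\min\{l_{g'}(\eta'_1)+l_{g'}(\eta'_2),\ l_{g'}(\eta'_3)+l_{g'}(\eta'_4)\}$, and in particular $h_{\rm top}(\varphi_{g'})>0$, completing the proof.

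I do not anticipate a genuine obstacle here: the three ingredients are exactly designed to compose in this way, and the only point requiring any care is the bookkeeping of the separation constant, namely insisting $2\delta<\varepsilon$ so that the perturbed $\varepsilon'$-ribbon produced by Proposition~\ref{prop1} still has $\varepsilon'=\varepsilon-2\delta>0$ and therefore is a ribbon for $g'$ in the sense of Definition~\ref{def:F}. Note also that the entropy bound obtained is not claimed to be uniform in $g'$ through this argument alone; uniformity (and hence $\varepsilon$-robust $h_{\rm top}$ in the sense of Definition~\ref{def:robust}) would require control of $L(g')$ in terms of $L(g)$, which however follows immediately from $d_{C^0}(g',g)<\delta$ since lengths change by at most a factor $e^{\delta/2}$, but this refinement is not needed for the stated corollary.
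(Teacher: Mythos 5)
Your proposal is correct and follows exactly the paper's route: the paper derives this corollary by chaining Theorem~\ref{thm2} (ribbon implies $\varepsilon$-ribbon), Proposition~\ref{prop1} (an $\varepsilon$-ribbon persists as an $(\varepsilon-2\delta)$-ribbon under a $\delta$-small $C^0$-perturbation with $2\delta<\varepsilon$), and Theorem~\ref{thm1} (a ribbon of geodesics forces $h_{\rm top}(\varphi_{g'})\geq \frac{1}{L}\log 2>0$), which is precisely your argument including the bookkeeping $\varepsilon-2\delta>0$.
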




In the proof of Theorem \ref{thm2} we use the following lemma repeatedly. 
For that let $A$ be an annulus such that each boundary component $b$ of $A$ is piecewise geodesic and such that
on each point on the finite and non-empty set of non-smooth points on $b$ the outer angle is $>\pi$. We say that a  boundary component $b$ 
with that properties is \textit{admissible}. 

\begin{lemma}\label{lem:annulus}
Let $\alpha$ be the free homotopy class of curves in $A$ provided by a choice of generator of $\pi_1(A) = \Z$. 
Then there is a simple geodesic $\gamma$ in $A$ with $\mathcal{E}_g(\gamma) = d := \inf\{\mathcal{E}_g(x) \, | \, x:S^1 \to A, [x]= \alpha\}$ and there is  $\varepsilon>0$ such that $\widetilde{d}  = \inf\{\mathcal{E}_g(x) \, | \, x:S^1 \to A, \, [x]= \alpha,   x \cap \partial A \neq \emptyset\} \geq e^{\varepsilon} d$. 
\end{lemma}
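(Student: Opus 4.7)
My plan is to obtain both assertions from a single direct-method construction of a minimizer in the class $\alpha$, combined with the admissibility of $\partial A$, which will force every minimizer into $\mathrm{int}(A)$. Note first that $d$ is strictly positive, because every loop in the non-trivial class $\alpha$ must go around the annulus and hence has $g$-length bounded below by the positive width of $A$. I would take a minimizing sequence $(\gamma_n)$ of constant-speed Lipschitz loops in $[\alpha]$ with $\mathcal{E}_g(\gamma_n)\to d$; these are uniformly Lipschitz, so by Arzel\`a--Ascoli a subsequence converges uniformly to a Lipschitz loop $\gamma$ still in $[\alpha]$, and lower semicontinuity of energy yields $\mathcal{E}_g(\gamma)=d$. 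Once I show $\gamma\subset\mathrm{int}(A)$, interior elliptic regularity upgrades $\gamma$ to a smooth closed geodesic, and simplicity follows from the standard cut-and-paste: any transverse self-intersection would produce a strictly shorter loop in class $\alpha$ after discarding a null-homotopic sub-loop and rounding the intersection corner, contradicting minimality.

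The crucial step is ruling out $\gamma\cap\partial A\neq\emptyset$. If $\gamma$ met $\partial A$ at a smooth boundary point $p$ tangentially, then $\gamma$ would coincide locally with the boundary geodesic through $p$ by uniqueness of geodesic initial value problems, and hence globally by an open-and-closed argument; but each boundary component of $A$ carries at least one corner and is therefore not a smooth closed geodesic, a contradiction. If $\gamma$ met $\partial A$ at a smooth point transversally, a small arc near $p$ could be pushed into the interior to strictly shorten $\gamma$. If $p$ were a corner, the outer-angle-$>\pi$ hypothesis means the interior angle of $A$ at $p$ is $<\pi$, so a small one-sided neighborhood of $p$ in $A$ is geodesically convex and a chord through the interior strictly cuts off the corner. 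In all cases we contradict minimality, so $\gamma\subset\mathrm{int}(A)$, proving Part 1. For Part 2, if the uniform gap failed, a sequence of boundary-touching loops $(\eta_n)$ in $[\alpha]$ with $\mathcal{E}_g(\eta_n)\to d$ would, by the same compactness argument applied to $(\eta_n)$ and to points $p_n\in\eta_n\cap\partial A$ (using compactness of $\partial A$), produce a minimizer $\eta$ with $\eta\cap\partial A\neq\emptyset$, contradicting Part 1. Setting $\varepsilon=\log(\widetilde d/d)>0$ then finishes the argument.

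The main obstacle is the corner case in the boundary-avoidance step. Making the short-cut precise at a corner with interior angle strictly less than $\pi$ requires a careful local analysis in geodesic normal coordinates, verifying that the two smooth boundary arcs meeting at the corner bound a locally geodesically convex sector on the $A$-side, so that any interior chord strictly undercuts a curve passing through the corner. Away from corners the argument reduces to the standard variational theory of closed geodesics and classical boundary regularity, so no further difficulty is anticipated.
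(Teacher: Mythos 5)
Your argument is correct and follows essentially the same route as the paper, which proves this lemma only by reference to Lemma~\ref{lem:intersectingboundary}: a compactness/minimizing-sequence argument combined with the observation that admissibility of $\partial A$ (interior angle $<\pi$ at the corners) lets one strictly shorten any boundary-touching competitor, the tangential case being funneled into the corner case precisely because each boundary component is assumed to contain a non-smooth point (here the contradiction of Lemma~\ref{lem:intersectingboundary}, ``not homotopic to the boundary,'' is unavailable since $\alpha$ \emph{is} the boundary class, and you correctly substitute the corner-cutting for it). The only thin spot is the simplicity claim: your cut-and-paste handles a self-intersection only when one of the two sub-loops is null-homotopic, whereas in the generator class of an annulus the sub-loops could a priori lie in classes $k$ and $1-k$ with $k\geq 2$, so the general case needs the standard Freedman--Hass--Scott-type argument (or curve shortening from an embedded representative, as the paper uses in Proposition~\ref{prop1}); the paper likewise leaves simplicity as ``well-known,'' so this does not put you behind its own level of detail.
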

\begin{proof}

The statement of this lemma is well-known and can be proved similarly to Lemma \ref{lem:intersectingboundary}. To that end note that since the boundary components are admissible, any closed piecewise geodesic $\gamma$ in $A$ that intersects a non-smooth point $x$  at some boundary component of $A$ can be replaced by a piecewise geodesic $\gamma'$ in $A$ whose image coincides with that of $\gamma$ outside a neighborhood of $b$ and that has strictly smaller energy.   
\end{proof}

\begin{figure}
\begin{tikzpicture}[scale=0.4]

\begin{scope}\clip (-3,-3.1) rectangle (27,12.1);

\draw[-] (-10,0) -- (30,0) node [pos=0.75,below] {\Large $\widetilde\gamma_4$};
\draw[-] (-10,9) -- (30,9) node [pos=0.75,above] {\Large $\widetilde\gamma_1$};

\draw[-,mark position=0.83(a)] plot [smooth] coordinates {(-12,5) (-8,-3)  (-4,5)  (0,-3)  (4,5)  (8,-3)  (12,5) (16,-3)  (20,5)  (24,-3)  (28,5) (32,-3)}; \node at (a) [right] {\Large $\widetilde\gamma_3$};

\draw[-,mark position=0.83(b)] plot [smooth] coordinates {(-12,4) (-8,12) (-4,4)  (0,12) (4,4)  (8,12)  (12,4) (16,12)  (20,4)  (24,12)  (28,4) (32,12)}; \node at (b) [right] {\Large $\widetilde\gamma_2$};

\draw[dashed,mark position=0.42(c)] plot [smooth] coordinates {(-12,8) (-4,3)   (4,8)  (12,8) (20,3)  (28,8) (36,8)};
\node at (c) [below] {\Large $\widetilde\eta_1$};
\draw[dashed,mark position=0.5(d)] plot [smooth] coordinates {(-12,2) (-4,2)   (4,6)  (12,2) (20,2)  (28,6) (36,2) };
\node at (d) [below] {\Large $\widetilde\eta_3$};
\draw[dashed,mark position=0.25(e)] plot [smooth] coordinates {(-12,6) (-4,1)   (4,1)  (12,6) (20,1)  (28,1) (36,6)};
\node at (e) [above] {\Large $\widetilde\eta_4$};
\draw[dashed,mark position=0.27(f)] plot [smooth] coordinates {(-12,7) (-4,8)   (4,3)  (12,7) (20,8)  (28,3) (36,7)};
\node at (f) [below] {\Large $\widetilde\eta_2$};
\end{scope}

\end{tikzpicture}

\caption{$\widetilde\gamma_1, \widetilde\gamma_2, \widetilde\gamma_3, \widetilde\gamma_4$ satisfy property $(F)$. The paths $\widetilde\eta_1, \widetilde\eta_2, \widetilde\eta_3, \widetilde\eta_4$ are constructed in the proof of Theorem \ref{thm2}, and satisfy property $(F,\varepsilon)$ for some $\varepsilon>0$. }
 \label{fig:F}
\end{figure}
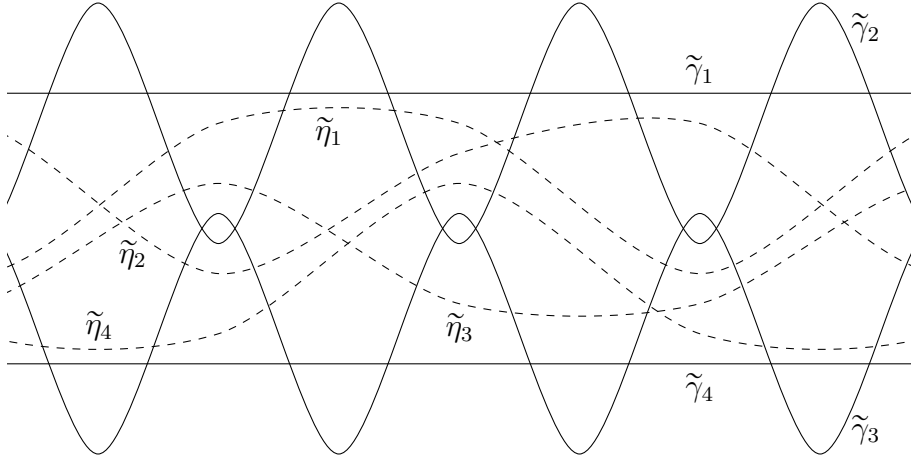

\begin{proof}[Proof of Theorem \ref{thm2}]
Let $\gamma_1, \ldots, \gamma_4$ be four curves with lifts $\widetilde\gamma_1, \ldots, \widetilde\gamma_4$ that form a ribbon. 
Choose $U_0$ and $U_1$ as in the proof before. 
Consider the four bi-infinite sequences $\mathfrak{a}^1, \ldots ,\mathfrak{a}^4$ of period $3$ by extending periodically the four words $110$, $011$, $100$, and $010$, respectively.  
We find in two steps four closed geodesic $\eta_1, \ldots, \eta_4$ representing $3\alpha$ in $T^2$ with four lifts $\widetilde\eta_1,\ldots,  \widetilde\eta_4$ that lie in $D(\mathfrak{a}^1),\ldots, D(\mathfrak{a}^4)$, respectively, and that form an $\varepsilon$-ribbon for some $\varepsilon>0$. 

\textit{Finding $\eta_1$ and $\eta_2$:} Note that  $D(\mathfrak{a}^1)$ and $D(\mathfrak{a^2})$ are invariant under the shift $\mathcal{T}^3$, where $\mathcal{T}$ denotes the shift corresponding to $\alpha$. Hence $D(\mathfrak{a}^1)$ and $D(\mathfrak{a^2})$ project to annuli $A_1$ resp.  $A_2$ in $\widetilde{T^2}/\mathcal{T}^3$. Their boundary components are admissible since the sequences $\mathfrak{a}^1$ and $\mathfrak{a}^2$ are non-constant. 
Fix choices  $\varepsilon_1>0$ and $\varepsilon_2>0$  of the $\varepsilon$ provided by Lemma \ref{lem:annulus} for $A_1$ resp. $A_2$.  
Let $\widehat{\eta}_1$ and $\widehat{\eta}_2$ be energy minimizing geodesics in $A_1$ and $A_2$ respectively, $\widetilde\eta_1, \widetilde{\eta_2}$ some choice of lifts to $\widetilde{T^2}$, and $\eta_1, \eta_2$ their projections to $T^2$. 

\textit{Finding $\eta_3$ and $\eta_4$:} Consider now  $D(\mathfrak{a}^3) \cap R(\widetilde\eta_1)$ and $D(\mathfrak{a}^4)\cap R(\widetilde\eta_1) \cap R(\widetilde{\eta_2})$. These sets are invariant under $\mathcal{T}^3$ and project to annuli $A_3$ resp. $A_4$ in $\widetilde{T^2} / \mathcal{T}^3$. By the choice of the sequences $\mathfrak{a}^1, \ldots, \mathfrak{a}^{4}$, one directly checks that the boundary components of $A_3$ and $A_4$ are admissible. Let $\varepsilon_3>0$ resp.  $\varepsilon_4>0$ be choices of $\varepsilon$ for $A_3$ resp. $A_4$ provided by Lemma \ref{lem:annulus}. Let $\widehat{\eta}_3$ resp.  $\widehat{\eta}_4$ be energy minimizing geodesics in $A_3$ resp.  $A_4$, let $\widetilde\eta_3$, resp.  $\widetilde{\eta}_4$ some choice of lifts to $\widetilde{T^2}$, and $\eta_3$ resp. $\eta_4$ their projections to $T^2$. 

By the intersection properties of the annuli $A_1, A_2, A_3$ and $A_4$, one sees that $\widetilde\eta_1$ and $\widetilde\eta_2$ intersect, that $\widetilde\eta_2$ and $\widetilde\eta_3$ intersect and that $\widetilde\eta_3$ and $\widetilde\eta_4$ intersect. By construction $\widetilde\eta_4$ is on the right of $\widetilde\eta_1$ and $\widetilde\eta_2$, and $\widetilde\eta_3$ is on the right of $\widetilde\eta_1$. Hence the geodesics $\eta_1, \ldots, \eta_4$ form a ribbon. 

Moreover, with $\varepsilon := \min\{\varepsilon_1, \varepsilon_2, \varepsilon_3, \varepsilon_4\}$, $\eta_1, \ldots, \eta_4$ form an $\varepsilon$-ribbon.
Indeed, let us see $\sep_g(\widetilde\eta_1, \widetilde\eta_2)\geq \varepsilon_1$, the remaining  conditions are  checked analogously. 
Choose a path $u:[0,1]\to \mathcal{L}_{3\alpha}T^2$ that lifts to a path $\widehat{u}$ from $\widehat{\eta}_1$ to $\widehat{\eta}_2$.
It is clear that there are $s \in [0,1]$ such that the lifted curve $\widehat{u}(s)$ touches the boundary of $A_1$, and let $s_1>0$ be the infimum in $[0,1]$ of such $s$. By compactness, $\widehat{u}(s_1)$ touches the boundary of $A_1$ and $\widehat{u}(s_1)$ is contained in $A_1$. 
Since $\widehat{\eta_1}$ has minimal energy among all curves of class $3\alpha$ in $A_1$ we conclude with  Lemma \ref{lem:annulus} that $\mathcal{E}_g(\widehat{u}(s_1)) \geq e^{\varepsilon_1}\mathcal{E}_g(\widehat{\eta}_1)$. 
Hence $\sep_g(\widetilde{\eta_1}, \widetilde\eta_2) \geq \log\left(\frac{\mathcal{E}_g(\widehat{u}(s_1))}{\mathcal{E}_g(\widehat{\eta}_1)}\right) \geq \varepsilon_1$. 
\end{proof}

\subsection{Ribbons exist for $C^{\infty}$ generic metrics}\label{sec:application_generic}
Minimal geodesics on higher genus surfaces and on the two-torus $T^2$,  i.e. geodesics minimizing the length between any two of its points on the universal covering, were first systematically studied by Morse \cite{Morse24} and Hedlund \cite{Hedlund}. 
Bolotin and Rabinowitz  \cite{BolotinRabinowitz} obtained results about the existence of certain families of  homoclinic and heteroclinic geodesics on $T^2$ that shadow minimal heteroclinics, using a renormalized length functional. As we will explain below, one can obtain from their work (more specifically \cite[Theorem 4.2]{BolotinRabinowitz}) that under certain assumptions there is 
a (non-closed) geodesic in the universal cover which, after applying a family of covering transformations, provides a family of geodesics of a certain intersection pattern, similar to our ribbons. 
 Furthermore, an analogous argument as above in the proof of Theorem \ref{thm2} yields subsequently four closed curves that form a ribbon. 
 The assumption in the theorem above is satisfied for a $C^{\infty}$ generic metric. Hence one can derive the following 


\begin{theorem}\label{thm:generic}
In the space of metrics on $T^2$ with positive topological entropy equipped with the $C^\infty$ topology there is a co-meager set $\mathcal{S}$ such that any $g\in \mathcal{S}$ has robust topological entropy. 
\end{theorem}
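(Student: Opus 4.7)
My plan is to exhibit a $C^\infty$-residual subset of metrics with positive topological entropy that admit four closed geodesics forming a ribbon, and then invoke Theorems~\ref{thm1}--\ref{thm2} and Proposition~\ref{prop1}. The bridge from positive entropy to the ribbon is provided by the multibump theorem of Bolotin and Rabinowitz, \cite[Theorem~4.2]{BolotinRabinowitz}.

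The first step is to intersect the $C^\infty$-open set of metrics with $h_{\topo}(g)>0$ with the $C^\infty$-residual set of bumpy metrics, together with a further residual refinement described below, and call the resulting set $\mathcal{S}$. For $g\in\mathcal{S}$ I would locate a rational homology class $\alpha\in H_1(T^2,\ZZ)$ for which the set of periodic minimal geodesics of $g$ in class $\alpha$ fails to foliate $T^2$: were it to foliate for every rational $\alpha$, then Hedlund's structure theorem together with the analysis of minimal geodesics of irrational direction would force an invariant foliation by orbits for every rotation vector, which is incompatible with $h_{\topo}(g)>0$ (for instance via Katok's theorem on surfaces, a horseshoe cannot coexist with such a complete family of invariant tori). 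I would then pick two adjacent periodic minimal geodesics $\gamma_\pm$ in class $\alpha$ cobounding a strip $A\subset T^2$ containing no minimal periodic orbit of class $\alpha$.

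The second step is to verify the hypotheses of \cite[Theorem~4.2]{BolotinRabinowitz}. A standard Tonelli-type argument in $A$ produces two minimal heteroclinic geodesics, one from $\gamma_-$ to $\gamma_+$ and one in the reverse direction. Bumpiness makes $\gamma_\pm$ hyperbolic, and the further residual condition imposed on $\mathcal{S}$ (transversality of the stable and unstable manifolds of the hyperbolic closed orbits $\gamma_\pm$, which is $C^\infty$-residual by Kupka--Smale-type arguments) yields the non-degeneracy assumption of Bolotin--Rabinowitz. Their theorem then produces, for every bi-infinite binary code $\mathfrak{a}\in\{0,1\}^{\ZZ}$, a geodesic $\widetilde{\eta}_{\mathfrak{a}}$ in $\widetilde{T^2}$ shadowing the heteroclinic chain prescribed by $\mathfrak{a}$, where the shift on codes corresponds to the deck transformation $\mathcal{T}$ associated to $\alpha$; two such shadowing geodesics have crossings precisely at the indices where their codes differ.

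The third step is to mimic the construction in the proof of Theorem~\ref{thm2}: for a large enough common period $p$ I would choose four periodic codes $\mathfrak{a}^1,\dots,\mathfrak{a}^4$ whose pairwise relative positions realize the adjacency and intersection pattern of Definition~\ref{def:F}; the corresponding four closed geodesics of class $p\alpha$ in $T^2$ then form a ribbon. Applying Theorem~\ref{thm2} upgrades this to an $\varepsilon$-ribbon for some $\varepsilon>0$, after which Proposition~\ref{prop1} together with Theorem~\ref{thm1} yields a uniform positive lower bound on $h_{\topo}(g')$ for all $g'$ in a $d_{C^0}$-neighborhood of $g$, so $g$ has robust topological entropy. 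I expect the main obstacle to be the second step: extracting from bumpiness plus $h_{\topo}(g)>0$ the full non-degeneracy hypothesis of \cite[Theorem~4.2]{BolotinRabinowitz}, which concerns heteroclinic rather than periodic orbits, and packaging the required transversality of stable and unstable manifolds as a $C^\infty$-residual condition compatible with the non-foliation conclusion of the first step.
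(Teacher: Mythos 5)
Your proposal matches the paper's argument in Section~\ref{sec:application_generic} almost step for step: a gap annulus between two adjacent minimal closed geodesics of a simple class $\alpha$, genericity of the non-degeneracy condition via bumpiness plus transversality of the heteroclinic intersection (the paper makes the barrier function $B_-^+$ non-constant by the metric perturbations of Donnay and Petroll), the Bolotin--Rabinowitz shadowing theorem, and finally the ribbon machinery of Theorems~\ref{thm1}--\ref{thm2} and Proposition~\ref{prop1}. The one caveat is that you credit \cite[Theorem 4.2]{BolotinRabinowitz} with a periodic multibump statement (shadowing orbits for every bi-infinite binary code, with crossings exactly at the indices where the codes differ), whereas the paper extracts only a single non-closed heteroclinic shadowing a finite chain, records its intersection pattern with its own translates as a ``ribbon$^*$'', and then manufactures the four closed geodesics itself by energy minimization in the annuli $D(\mathfrak{a})/\mathcal{T}^3$, exactly as in the proof of Theorem~\ref{thm2} -- which is also the fallback you describe, so the argument closes either way.
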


We now discuss the result in \cite{BolotinRabinowitz} and its relation to the existence of curves that form a ribbon. 
We keep mainly the notations in \cite{BolotinRabinowitz}.

Assume that the metric $g$ on $T^2$ is not flat. Then there is a simple free homotopy class of closed curves $\alpha$ in $T^2$ and two (possibly identical) minimal geodesics $v_-$ and $v_+$ of class $\alpha$ that from an annulus $A\subset T^2$ that contains no minimal closed geodesics of class $\alpha$ in its interior, see \cite{GlasmachersKnieper}.  Let $S \subset \widetilde{T^2}$ the strip that is the preimage of $A$ under the covering map. 
Let $\tau:\widetilde{T^2} \to \widetilde{T^2}$ be the translation corresponding to $\alpha$. Let $\sigma:\widetilde{T^2} \to \widetilde{T^2}$ be a translation corresponding to a simple homotopy class such that, if $v_- = v_+$ then $\sigma(\widetilde{v}_-) = \widetilde{v}_+$, and otherwise $\sigma(\widetilde{v}_-)$ lies on the left of $\widetilde{v}_+$. 
Let $u_i, i\in \Z$,  be the lifts in $S$ of a shortest geodesic $u$ connecting $v_-$ and $v_+$, with $u_i = \tau^i u_0$. 
Consider the space $\alpha_i$ of (rectifiable) curves $x:[0,1] \to S$ with no constant pieces and such that $x(0) \in u_i$, $x(1) \in u_{i+1}$, and let 
$$\Pi = \{y = (x_i)_{i\in \Z} \, | x_i \in \alpha_i, x_i(1) = x_{i+1}(0)\} \subset \prod_{i\in \Z} \alpha_i.$$ Define the \textit{renormalized length functional} on $\Pi$ as 
$$J(y) := \sum_{i\in \Z} (\mathcal{L}(x_i) - c),$$
whenever the series is convergent, otherwise $J(y) := +\infty$. 
It is explained in \cite{BolotinRabinowitz} that $J$ can be extended to paths $y:\R \to \widetilde{T^2}$, and in particular to $y:\R \to \widetilde{T^2}$ that are negative asymptotic to $\sigma^i(\widetilde{v}_-)$ and positive asymptotic to $\sigma^j(\widetilde{v}_+)$ for some $i,j \in \Z$. 
One defines a \textit{barrier function} $B_-^+$ on $S$ by 
$$
B_-^+(q) := \inf\{J(y) \, | \, y: \R \to S \text{ goes through } q \text{ and is asympt. to }v{\mp} \text{ as } t\to \mp \infty\}.$$
One shows that $B_{-}^+$ is finite, and that the set of minimum points conists of $\widetilde{v}_- \cup \widetilde{v}_+$ and the set of  minimal heteroclinics from $\widetilde{v}_-$ to $\widetilde{v}_+$, which is moreover non-empty. For minimal heteroclinics $h:\R \to S$ one has $B_-^+(h(t)) = J(h)$. Here \textit{minimal heteroclinics} from $\widetilde{v}_-$ to $\widetilde{v}_+$ are globally minimizing geodesics $h(t)$  that are asymptotic to  $\widetilde{v}_{\mp}$ as $t\to \mp \infty$. 
Parts of the Theorems 4.1. and Theorem 4.2. in \cite{BolotinRabinowitz} can be formulated as follows
\begin{theorem}\cite{BolotinRabinowitz}\label{thm:BolRab}
Assume that $B_-^+$ is non-constant, and let $l\in \N$. Then there is a heteroclinic  $\widetilde{\gamma}: \R \to \widetilde{T^2}$ from $\widetilde{v}_-$ to $\sigma^l(\widetilde{v}_+)$, and minimal heteroclinics $h_i, i=0, \ldots, l$ from $\widetilde{v}_-$ to $\widetilde{v}_+$ such that $\widetilde{\gamma}$ shadows $\sigma^i(h_i), i=0, \ldots, l$.  
\end{theorem}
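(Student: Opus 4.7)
The plan is to construct $\widetilde{\gamma}$ by constrained minimization of the renormalized length functional $J$, where the constraint is chosen to force the minimizer to pass through a prescribed sequence of ``gate'' regions between consecutive $\sigma$-translates of the minimal heteroclinic set. The hypothesis that $B_-^+$ is non-constant is exactly what allows one to place such gates in regions where $B_-^+$ is strictly larger than its minimum, producing a strict cost separation that survives the limit procedure.

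First I would analyze the set $\mathcal{M}$ of minimal heteroclinics from $\widetilde{v}_-$ to $\widetilde{v}_+$. By a standard non-crossing argument for absolute minimizers (Aubry--Mather-type comparison in the strip $S$), two distinct elements of $\mathcal{M}$ cannot cross transversely, so $\mathcal{M}$ is totally ordered by transverse position in $S$, $\tau$-invariant, and closed under $C^0_{\mathrm{loc}}$-limits. Together with the characterization of the zero level of $B_-^+ - \min B_-^+$ as $\widetilde{v}_- \cup \widetilde{v}_+ \cup \bigcup_{h\in\mathcal{M}}\mathrm{image}(h)$, the non-constancy of $B_-^+$ forces an open ``gap'' region $\Omega \subset S$ between two adjacent elements of $\mathcal{M} \cup \{\widetilde{v}_\pm\}$ on which $B_-^+ \geq c + \delta$, where $c := \min B_-^+$ and $\delta > 0$. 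Fix a small compact transverse arc $K \subset \Omega$.

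Next, for each $l$ I would consider the class $\mathcal{X}_l$ of rectifiable paths $y:\RR \to \widetilde{T^2}$ that are negatively asymptotic to $\widetilde{v}_-$, positively asymptotic to $\sigma^l(\widetilde{v}_+)$, and pass through each gate $\sigma^i(K)$, $i=1,\ldots,l$, in order. On this class the extended renormalized length $J$ (as constructed by Bolotin--Rabinowitz) is well-defined and bounded below by $(l+1)c$ via a decomposition of $J$ into $l+1$ segments between the $\sigma^i(u)$-levels, each of which is bounded below by the barrier at its endpoints. A minimizing sequence $y_n \in \mathcal{X}_l$ can be reparametrized by arc-length and shown to be locally equicontinuous; the gates pin the sequence transversely, and the exponential approach of minimal heteroclinics to $\widetilde{v}_\pm$ controls the asymptotic tails. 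A diagonal extraction yields a $C^0_{\mathrm{loc}}$-limit $\widetilde{\gamma} \in \mathcal{X}_l$, and lower semi-continuity of $J$ makes it a minimizer. Interior regularity and a first-variation argument at each gate (where the Lagrange multiplier must vanish since the constraint is a strict inequality at a minimizer due to $\delta > 0$) show that $\widetilde{\gamma}$ is a smooth geodesic.

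Finally, for each $i \in \{0,\ldots,l\}$ the segment of $\widetilde{\gamma}$ in the $i$-th slab between $\sigma^{i}(u)$-gates has renormalized length arbitrarily close to $c$, so a quantitative form of the minimality characterization (any path realising $J$-value within $\eta$ of $c$ in a slab is $C^0$-close to some element of $\sigma^i \mathcal{M}$) gives that this segment shadows some $\sigma^i(h_i)$ with $h_i \in \mathcal{M}$, which is the desired conclusion. The main obstacle will be the compactness step in paragraph three, in particular ruling out escape of minimizing sequences to infinity along the $\tau$-direction and showing that the minimiser is a genuine geodesic rather than a ``broken'' path with corners at the gates; the strictness $B_-^+ \geq c+\delta$ on $K$ provided by the non-constancy hypothesis is precisely what both prevents escape and forces the gate constraint to be inactive at the minimum.
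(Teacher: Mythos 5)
This statement is not proved in the paper at all: it is quoted verbatim as a result of Bolotin--Rabinowitz (parts of their Theorems 4.1 and 4.2), so there is no in-paper argument to compare against; what can be assessed is whether your sketch would actually reprove the cited theorem. In outline it has the right flavor (constrained minimization of the renormalized functional $J$, using non-constancy of $B_-^+$ to produce a gap region with $B_-^+\geq c+\delta$, then compactness plus lower semicontinuity), and your identification of the minimum set of $B_-^+$ and of the ordered, non-crossing structure of $\mathcal{M}$ is fine. But the core mechanism you propose -- forcing the minimizer through gates $\sigma^i(K)$ placed \emph{inside} the high-barrier gap -- does not deliver the theorem, and in fact contradicts your own final step. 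If every transition is constrained to pass through a point where $B_-^+\geq c+\delta$, then the renormalized cost of each slab segment is bounded \emph{below} by $c+\delta$, so it can never be ``within $\eta$ of $c$'' for $\eta<\delta$, and the quantitative shadowing criterion you invoke (cost close to $c$ forces $C^0$-closeness to some element of $\sigma^i\mathcal{M}$) is unusable. Relatedly, the claim that ``the Lagrange multiplier must vanish since the constraint is a strict inequality at a minimizer due to $\delta>0$'' is a non sequitur: the gate constraint is exactly the kind of constraint the minimizer would like to violate (it is cheaper to cross the strip near a minimal heteroclinic), so it is active by design, and nothing in your argument rules out a corner at the gate or shows the passage point is interior to $K$.

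The second genuine gap is the one you yourself flag but do not close: loss of compactness by splitting. Gates pin the transverse location of each crossing, but they do not prevent a minimizing sequence from spending longer and longer stretches near the intermediate minimal geodesics between consecutive gates, so the $C^0_{\rm loc}$ limit can degenerate into a broken chain of $l+1$ heteroclinics rather than a single orbit asymptotic to $\widetilde{v}_-$ and $\sigma^l(\widetilde{v}_+)$; indeed the unconstrained infimum is typically realized only by such chains. This is precisely the hard part of Bolotin--Rabinowitz's proof: they do not minimize over curves forced through high-barrier gates, but over classes of curves constrained to shadow the prescribed chain $\sigma^i(h_i)$ within suitably chosen neighborhoods/windows, and they use the $\delta$-gap furnished by non-constancy of $B_-^+$ to show that a constrained minimizer stays in the \emph{interior} of the constraint set (hence is a true geodesic) and does not split. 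Your sketch gestures at this but replaces it with an assertion; as written, both the ``genuine single heteroclinic'' step and the shadowing step are unproved, and the gate placement would have to be inverted (constraints confining the curve near the low-barrier minimal set, with the gap used only for the comparison estimate) to follow the actual Bolotin--Rabinowitz route.
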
   
For the precise definition of shadowing, which is not important for our considerations, we refer to \cite{BolotinRabinowitz}. The time intervals in $\R$ for which $\widetilde{\gamma}$ shadows  $\sigma^i(h_i)$ might be very far apart from each other, and so are the two time intervals in which $\widetilde{\gamma}$ is close to $\widetilde{v}_-$ resp. $\sigma^l(\widetilde{v}_+)$. Furthermore, the heteroclinics $\widetilde\gamma$ constructed in the theorem are embedded. 
The assumptions that $B_-^+$ is non-constant is equivalent to the assumption that there is no foliation of $S$ by minimal heteroclinics from $\widetilde{v}_-$ to $\widetilde{v}_+$.

We now apply Theorem \ref{thm:BolRab} and 
observe that it provides geodesics $\widetilde{\gamma}:\R \to \widetilde{T^2}$ such that together with certain translates, an  intersection pattern similar a ribbon appears, which we call ribbon$^*$, and we proceed with the definition of this property, see Figure~\ref{fig:F*} for an illustration. 
In the following we say that two distinct  geodesics $\eta_1$ and $\eta_2$ in $\widetilde{T^2}$ intersect positively (resp. negatively) at $\eta_1(t_1) = \eta_2(t_2)$ if the orientation given by the tangent vectors $(\eta'_1(t_1), \eta'_2(t_2))$ coincide with the same (resp.  opposite) orientation of that of $T^2$.  

\begin{definition}\label{def:F*}
Let $\gamma$ be (non-closed) geodesic in $T^2$, and $\widetilde{\gamma}$ be a lift to $\widetilde{T^2}$. Assume $\widetilde\gamma$ is embedded. 
We say that $\gamma:\R \to T^2$ (or $\widetilde{\gamma}$), and a family of five covering transformations $\theta_1, \theta_2, \theta_3, \theta_4, \mathcal{T}: \widetilde{T^2} \to \widetilde{T^2}$ form a ribbon$^*$ if 
  for some parameters $s^i_j,  t^i_j \in \R, s^i_j < t^i_j, \, j=1,4; \, i\in \Z$ and  $u^i_j,  v^i_j \in \R, u^i_j < v^i_j, \, j=2,3; \, i\in \Z$, the lifts 
 $\widetilde{\gamma}^i_j = \mathcal{T}^i \circ \theta_j (\widetilde{\gamma}), \, j=1, \ldots, 4; \, i\in \Z$ satisfy, for all $i\in \Z$, the following: 

\begin{enumerate}\setcounter{enumi}{-1}
\item
\begin{itemize} 
\item $\widetilde{\gamma}^i_1$ and $\widetilde{\gamma}^{i+1}_1$ intersect negatively in $\widetilde{\gamma}^i_1(t^i_1) = \widetilde{\gamma}^{i+1}_1(s^{i+1}_1)$, 
\item  $\widetilde{\gamma}^i_4$ and $\widetilde{\gamma}^{i+1}_4$ intersect positively  in $\widetilde{\gamma}^i_4(t^i_4) = \widetilde{\gamma}^{i+1}_4(s^{i+1}_4)$. 
\end{itemize}

\item With $\eta^i_j := \widetilde{\gamma}^i_1|_{[s^i_j,t^i_j]}, \, j=1,4$, the piecewise geodesic 
$\eta_1:= \cdots \eta^{-1}_1 \eta^0_1 \eta^1_1 \cdots$ is on the left of the piecewise geodesic $\eta_4:= \cdots \eta^{-1}_4 \eta^0_4 \eta^1_4 \cdots$. 
\item 
\begin{itemize}
\item $\eta^i_2 := \widetilde{\gamma}^i_2|_{[u^i_2,v^i_2]}$ intersect $\eta_1$ only at the endpoints of $\eta^i_2$, first positively and then negatively, and does not intersect  $\eta_4$.
\item  $\eta^i_3 := \widetilde{\gamma}^i_3|_{[u^i_3,v^i_3]}$ intersect $\eta_4$ only at the endpoints of $\eta^i_3$, first negatively and then positively, and does not intersect $\eta_1$.
\end{itemize}
\item $\eta^i_2$ and $\eta^i_3$ intersect.
\item For all $i,j\in \Z$ with $i\neq j$,  $\eta^i_2$ and $\eta^j_3$ are disjoint,  $\eta^i_2$ and $\eta^j_2$ are disjoint, and  $\eta^i_3$ and $\eta^j_3$ are disjoint. 
\end{enumerate}
	\flushright$\triangle$
\end{definition}

It is now easy to see, and we leave it to the reader to check, see also Figure~\ref{fig:F*}, that if  $\widetilde{\gamma}$ is a heteroclinic from $\widetilde{v}_-$ to $\sigma^{4}(\widetilde{v}_+)$ obtained from Theorem \ref{thm:BolRab}, then for suitably chosen $n_2, n_3,n_4,n_5 \in \N$, $n_3< n_2< n_5< n_4$, the geodesic $\widetilde{\gamma}$ together with the shifts $\theta_1 = \id$, $\theta_2 = \sigma^1 \circ \tau^{n_2}$, $\theta_3 = \sigma^{-2}\circ \tau^{n_3}$, $\theta_4 = \sigma^{-1} \circ \tau^{n_4}$, $\mathcal{T} = \sigma^3 \circ \tau^{n_5} $ form a ribbon$^*$. Hence one can conclude:   

\begin{proposition}
Assume that $B_{-}^{+}$ is non-constant. Then there is a geodesic $\gamma$ and deck transformations $\theta_1, \theta_2, \theta_3, \theta_4, \mathcal{T}$ on $\widetilde{T^2}$ that form a ribbon$^*$. 
\end{proposition}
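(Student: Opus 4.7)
The plan is to apply Theorem~\ref{thm:BolRab} with $l = 4$: this produces an embedded heteroclinic $\widetilde{\gamma}:\R\to \widetilde{T^2}$ from $\widetilde v_-$ to $\sigma^4\widetilde v_+$ together with minimal heteroclinics $h_0,\ldots,h_4$ from $\widetilde v_-$ to $\widetilde v_+$, such that $\widetilde\gamma$ consecutively shadows $\sigma^i(h_i)$ for $i = 0,\ldots, 4$. Geometrically this means $\widetilde\gamma$ passes through five ``stages'', each close (in the sense of \cite{BolotinRabinowitz}) to a shifted copy of a minimal heteroclinic, and in between stages it lingers exponentially close to the minimizers $\sigma^j\widetilde v_{\pm}$. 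One may then set $\theta_1 = \id$, $\theta_2 = \sigma\tau^{n_2}$, $\theta_3 = \sigma^{-2}\tau^{n_3}$, $\theta_4 = \sigma^{-1}\tau^{n_4}$, and $\mathcal{T} = \sigma^3\tau^{n_5}$, where the integers $n_2,n_3,n_4,n_5$ are to be chosen large with $n_3 < n_2 < n_5 < n_4$, and the ``vertical'' displacements are chosen so that the four translates of $\widetilde\gamma$ occupy horizontal ranges $[\sigma^0,\sigma^4]$, $[\sigma^1,\sigma^5]$, $[\sigma^{-2},\sigma^2]$ and $[\sigma^{-1},\sigma^3]$ inside the strip $S$.

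I would then verify the five conditions in Definition~\ref{def:F*} in turn. Condition~(0) follows because $\mathcal{T}\widetilde\gamma$ starts near $\sigma^3\widetilde v_-$ and ends near $\sigma^7\widetilde v_+$, so $\widetilde\gamma$ and $\mathcal{T}\widetilde\gamma$ spend long time together in the neighborhood of $\sigma^3\widetilde v_-$ (resp.\ $\sigma^3\widetilde v_+$), where by shadowing plus transversality with $v_\pm$ they must cross once with the required sign; choosing $n_5$ large separates all further crossings so that exactly one transverse intersection occurs between consecutive $\tau$-translates of $\widetilde\gamma_j := \theta_j\widetilde\gamma$. Concatenating the arcs $\eta_j^i = \widetilde\gamma_j^i|_{[s_j^i,t_j^i]}$ between these crossings gives the piecewise geodesics $\eta_1$ and $\eta_4$; the horizontal ranges above then force $\eta_1$ to lie on the left of $\eta_4$, yielding~(1).

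Condition~(2) is the core of the intersection pattern: $\eta_2$, which traverses the horizontal range $[\sigma^1,\sigma^5]$, must cross $\eta_1$ (range $[\sigma^0,\sigma^4]$) precisely twice at the boundary of the shared range $[\sigma^1,\sigma^4]$, with opposite signs for topological reasons, while the inequality $n_3 < n_2 < n_5 < n_4$ keeps $\eta_2$ separated from $\eta_4$ along the $\tau$-direction. The analogous statement for $\eta_3$ is symmetric. Condition~(3) follows because $\eta_2$ and $\eta_3$ both pass through the strip near $\sigma^{1}$, and the choice of $n_2,n_3$ forces them to intersect once. Condition~(4) is the easiest: the $\tau$-periodicity of the ribbon$^*$ spacing is precisely $n_5$, chosen large enough that distinct $\mathcal{T}^i$-translates of any $\eta_j^0$ have disjoint $\tau$-supports in $S$, given the bounded vertical oscillation of $\widetilde\gamma$ imposed by shadowing.

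The main obstacle is item~(2): one must show that during the $i$-th ``stage'' the shadowing geodesic actually crosses the boundary of the strip foliated by $\sigma^i(h_i)$-type curves with the correct orientation and exactly twice, rather than many times or tangentially. The key input here is that the $h_i$ are minimal heteroclinics, hence simple and mutually disjoint up to translates, so transversality with the shifted heteroclinic $\widetilde\gamma_{j}$ is forced; precise statements are available in \cite{BolotinRabinowitz}. Once (2) is in place, (3) and (4) reduce to the freedom of choosing $n_2,n_3,n_4,n_5$, completing the proposition.
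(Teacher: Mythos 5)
Your overall strategy is exactly the one the paper takes: apply Theorem~\ref{thm:BolRab} with $l=4$ and use the shifts $\theta_1=\id$, $\theta_2=\sigma\tau^{n_2}$, $\theta_3=\sigma^{-2}\tau^{n_3}$, $\theta_4=\sigma^{-1}\tau^{n_4}$, $\mathcal{T}=\sigma^3\tau^{n_5}$ with $n_3<n_2<n_5<n_4$. The paper itself declares the verification of Definition~\ref{def:F*} ``easy to see'' and leaves it to the reader with a pointer to Figure~\ref{fig:F*}, so your attempt to actually check the five conditions goes beyond what is written there. Most of it is fine at sketch level, but one step as stated cannot work.

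The problem is your treatment of condition $(0)$: you assert that, for $n_5$ large, consecutive translates $\widetilde\gamma^i_j$ and $\widetilde\gamma^{i+1}_j$ meet in \emph{exactly one} transverse point ``with the required sign'', and you concatenate the arcs between ``these crossings''. All the $\theta_j$ and $\mathcal{T}$ are deck transformations of the torus, hence commuting, orientation-preserving translations of $\R^2$; therefore $\widetilde\gamma^i_4\cap\widetilde\gamma^{i+1}_4=\theta_4\theta_1^{-1}\bigl(\widetilde\gamma^i_1\cap\widetilde\gamma^{i+1}_1\bigr)$, with identical intersection signs. Condition $(0)$ demands a \emph{negative} crossing for the $j=1$ family and a \emph{positive} one for the $j=4$ family, so if there were a unique crossing the definition could not be satisfied at all. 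What actually happens (and what Figure~\ref{fig:F*} depicts) is that $\widetilde\gamma_j$ and $\mathcal{T}\widetilde\gamma_j$ cross at least twice, once with each sign --- roughly, the rising portion of each translate crosses the asymptotic ``tail'' of the other --- and the two boundary curves $\eta_1$ and $\eta_4$ are cut at \emph{different} intersection points of the translated configurations: the negative ones for $\eta_1$, the positive ones for $\eta_4$. This choice is also what makes condition $(1)$ (that $\eta_1$ lies to the left of $\eta_4$) nontrivial, and is where the offsets $\sigma^{0}$ versus $\sigma^{-1}$ and the ordering $n_5<n_4$ are actually used; your sketch should be corrected accordingly. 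The remaining conditions $(2)$--$(4)$ are treated in your proposal at the same level of generality as in the paper, where they are left to the reader.
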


\begin{figure}
\begin{tikzpicture}[scale=0.5]
\begin{scope}\clip (-1,3) rectangle (26,20);
    \coordinate (A) at (-30,8.2);
\coordinate (B) at (2,8.2);
\coordinate (C) at (10,15.8);
\coordinate (D) at (50,15.8);

\draw[dashed] (A) -- (B);
\draw[-] ($(A)-(0,0.2)$)-- ($(B)-(0,0.2)$);

\path[name path=p1] (B) -- (C) node[midway, above left] {\Large $\widetilde\gamma^i_1$};
\draw[use path=p1,dashed];
\draw[dashed] (C) -- (D);

\draw[dashed] ($(A) + (9,2)$) -- ($(B) + (9,2)$);
\draw[dashed] ($(B)+ (9,2)$) -- ($(C)+ (9,2)$) node[pos=1, above left] {\Large $\widetilde\gamma^i_2$};
\draw[dashed] ($(C)+ (9,2)$) -- ($(D)+ (9,2)$);

\draw[dashed] ($(A) + (1,-4)$) -- ($(B) + (1,-4)$);
\draw[dashed] ($(B)+ (1,-4)$) -- ($(C)+ (1,-4)$);
\draw[dashed] ($(C)+ (1,-4)$) -- ($(D)+ (1,-4)$)  node[pos=0.35, above left] {\Large $\widetilde\gamma^i_3$};

\draw[dashed] ($(A) + (10,-2)$) -- ($(B) + (10,-2)$)node[pos =0.9, below] {\Large $\widetilde\gamma^i_4$};;
\path[name path=p3] ($(B)+ (10,-2)$) -- ($(C)+ (10,-2)$) ;
\draw[use path = p3, dashed]; 
\draw[dashed] ($(C)+ (10,-2)$) -- ($(D)+ (10,-2)$);
\draw[-] ($(C)+ (10,-2)+(0,0.2)$) -- ($(D)+ (10,-2)+(0,0.2)$);

\path[name path=p2]($(A) + (16,6)$) -- ($(B) + (16,6)$)node[midway, above left] {\Large $\widetilde\gamma^{i+1}_1$};
\draw[use path=p2,dashed] ;
\draw[dashed] ($(B)+ (16,6)$) -- ($(C)+ (16,6)$);
\draw[dashed] ($(C)+ (16,6)$) -- ($(D)+ (16,6)$);
\draw[-] ($(B)+ (16,6)-(0,0.2)$) -- ($(C)+ (16,6)-(0,0.2)$);
\draw[-] ($(C)+ (16,6)-(0,0.2)$) -- ($(D)+ (16,6)-(0,0.2)$);

\draw[dashed] ($(A) + (-6,-8)$) -- ($(B) + (-6,-8)$);
\draw[dashed] ($(B)+ (-6,-8)$) -- ($(C)+ (-6,-8)$);
\draw[-] ($(B)+ (-6,-8)+(0,0.2)$) -- ($(C)+ (-6,-8)+(0,0.2)$);
\path[name path = p4] ($(C)+ (-6,-8)$) -- ($(D)+ (-6,-8)$)node[midway, above left] {\Large $\widetilde\gamma^{i-1}_4$};
\draw[use path = p4, dashed];

 \path[name intersections={of={p1} and {p2}, by={I12}}];
\draw[-] ($(B)-(0,0.2)$) -- ($(I12)-(0,0.2)$);
\draw[-] ($(I12)-(0,0.2)$) -- ($(B) + (16,6) -(0,0.2)$);

 \path[name intersections={of={p3} and {p4}, by={I34}}];
 \draw[-] ($(C) + (-6,-8)+(0,0.2)$) -- ($(I34)+(0,0.2)$);
\draw[-] ($(I34)+(0,0.2)$) -- ($(C) + (10,-2)+(0,0.2)$);

\end{scope}
\end{tikzpicture}
\caption{Schematic illustration of a heteroclinic from Theorem \ref{thm:BolRab} with shifts that form a ribbon$^*$. In non-horizontal parts there is a shadowing of minimal heteroclinics. Solid lines illustrate the course of $\eta_1$ and $\eta_4$ as given in Definition~\ref{def:F*}.}
 \label{fig:F*}
\end{figure}
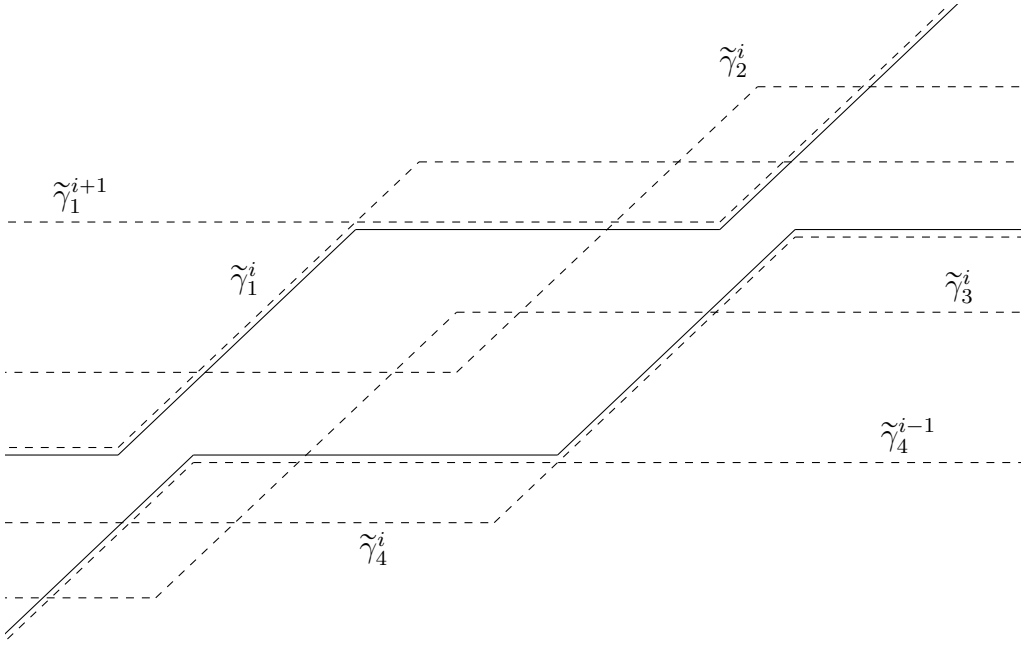

An analogous argument as in the proof of Theorem \ref{thm2} yields
\begin{proposition}
If there is a geodesic $\gamma$ and shifts $\theta_1, \theta_2, \theta_3, \theta_4, \mathcal{T}$ that form a ribbon$^*$,  
then there is $\varepsilon>0$ and four closed geodesics $\tau_1, \ldots, \tau_4$ that form an $\varepsilon$-ribbon. 
\end{proposition}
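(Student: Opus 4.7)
The plan is to adapt the proof of Theorem~\ref{thm2}, replacing the four lifts of closed geodesics used there by structures built from the $\mathcal{T}$-translates of the single (non-closed) heteroclinic $\widetilde{\gamma}$. The crucial observation is that, by items~(0) and~(1) of Definition~\ref{def:F*}, the two bi-infinite piecewise geodesic paths $\eta_1$ and $\eta_4$ are both $\mathcal{T}$-invariant and bound a closed strip $\overline{S}\subset \widetilde{T^2}$ analogous to the strip appearing in the proof of Theorem~\ref{thm1}; moreover, since the whole configuration of Definition~\ref{def:F*} is $\mathcal{T}$-equivariant, the arcs $\eta^i_2$ and $\eta^i_3$ are simply the $\mathcal{T}^i$-translates of $\eta^0_2$ and $\eta^0_3$.

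First I would define two open ``pockets'' $U_0, U_1\subset \overline{S}$: $U_0$ the connected component of $\overline{S}\setminus(\eta_1\cup \eta^0_2)$ bounded by $\eta^0_2$ and a subarc of $\eta_1$, and $U_1$ the component of $\overline{S}\setminus(\eta_4\cup \eta^0_3)$ bounded by $\eta^0_3$ and a subarc of $\eta_4$ (well defined by item~(2) of Definition~\ref{def:F*}). Items~(3) and~(4) of Definition~\ref{def:F*} guarantee that $U_0\cap U_1\neq\emptyset$ and that distinct $\mathcal{T}^n$-translates of $U_0$ (respectively $U_1$) are pairwise disjoint. With these pockets playing the role of $U_0, U_1$ from the proof of Theorem~\ref{thm1}, the construction of the proof of Theorem~\ref{thm2} goes through verbatim: for the four period-$3$ binary words $110,011,100,010$ one obtains $\mathcal{T}^3$-invariant regions $D(\mathfrak{a}^\ell)\subset\overline{S}$, $\ell=1,\ldots,4$, projecting to annuli $A_\ell$ in $\widetilde{T^2}/\mathcal{T}^3$ whose boundary components are piecewise geodesic with all corners produced by the transverse intersections from Definition~\ref{def:F*}.

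The main step is to check admissibility of $\partial A_\ell$ in the sense preceding Lemma~\ref{lem:annulus}, i.e.\ that the outer angle at every corner exceeds $\pi$. This is precisely where the sign conventions (``positive/negative'') in items~(0) and~(2) of Definition~\ref{def:F*} enter: at each type of corner (an $\eta^i_1$--$\eta^{i+1}_1$ junction, an $\eta^i_2$--$\eta_1$ junction, an $\eta^i_2$--$\eta^i_3$ junction, etc.) the specified sign is exactly what is needed to place the interior of the relevant annulus on the correct (concave) side of the glued pieces. The verification is a case analysis; keeping track of orientations consistently across all types of corners is the delicate part, and I expect it to be the main technical obstacle. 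Once admissibility is established, Lemma~\ref{lem:annulus} produces in each $A_\ell$ a simple energy-minimizing closed geodesic $\widehat{\tau}_\ell$ in the class of the generator, together with $\varepsilon_\ell>0$ such that any curve in that class touching $\partial A_\ell$ has energy at least $e^{\varepsilon_\ell}\mathcal{E}_g(\widehat{\tau}_\ell)$.

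Finally, set $\varepsilon:=\min_\ell \varepsilon_\ell$ and let $\tau_\ell$ be the projection to $T^2$ of a lift $\widetilde{\tau}_\ell\subset D(\mathfrak{a}^\ell)$ of $\widehat{\tau}_\ell$. Properties~(0),~(1),~(2) of Definition~\ref{def:F} for $\widetilde{\tau}_1,\ldots,\widetilde{\tau}_4$ (with $\alpha$ replaced by $3\alpha$) follow immediately from the nesting pattern of the $D(\mathfrak{a}^\ell)$: embeddedness from the simplicity in Lemma~\ref{lem:annulus}, the left/right ordering from the choice of the words $\mathfrak{a}^\ell$, and the transverse intersections from the pairwise overlap of consecutive $D(\mathfrak{a}^\ell)$. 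The $\varepsilon$-separation~(3) and the forced intersection property~(4) then follow by the same argument as in the last paragraph of the proof of Theorem~\ref{thm2} and the verification of property~(4) in the proof of Proposition~\ref{prop1}: any path in $\mathcal{L}_{3\alpha}T^2$ whose lift starts at $\widetilde{\tau}_i$ and leaves $D(\mathfrak{a}^i)$ must by Lemma~\ref{lem:annulus} pass through a curve of energy at least $e^{\varepsilon}\mathcal{E}_g(\tau_i)$, which gives both the required separation inequalities and the forced intersections of any curves separation-close to the $\widetilde{\tau}_i$.
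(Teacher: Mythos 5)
Your proposal is correct and follows essentially the same route as the paper: form the strip bounded by the $\mathcal{T}$-invariant piecewise geodesics $\eta_1$ and $\eta_4$, define the pockets $U_0^i$, $U_1^i$ from the arcs $\eta^i_2$, $\eta^i_3$ (nonempty intersection from item (3), disjointness of translates from item (4)), build the $D(\mathfrak{a})$ strips with admissible piecewise geodesic boundary, and then run the argument of Theorem~\ref{thm2} with Lemma~\ref{lem:annulus}. Your identification of the outer-angle (admissibility) verification via the sign conventions of items (0) and (2) as the delicate step is accurate, and your treatment is in fact somewhat more explicit than the paper's.
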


\begin{proof}
The piecewise geodesics $\eta_1$ and $\eta_4$ form an infinite strip with piecewise geodesic boundary with outer angles $>\pi$ at the non-smooth points. Define for all $i\in \Z$ the non-empty sets $U^i_0 = R(\eta_1) \cap L(\gamma^i_2)$, $U^i_1 =  L(\eta_4) \cap R(\gamma^i_3)$.  By item $(3)$ of the ribbon$^*$, $U^i_0 \cap U^i_1 \neq \emptyset$. One can define for any binary bi-infinite sequence $\mathfrak{a}$ as in the proof of Theorem \ref{thm1} (using the shift $\mathcal{T}$) sets $D(\mathfrak{a})$ which, by item $(4)$ of the ribbon$^*$, are infinite strips with piecewise geodesic boundary with outer angles $>\pi$ at the non-smooth points.  Now one can proceed as in proof of Theorem \ref{thm2} to obtain, for some $\varepsilon>0$, closed geodesics $\tau_1, \ldots, \tau_4$ that form an $\varepsilon$-ribbon. 
\end{proof}
\begin{corollary}
Let $g$ be a metric on $T^2$. If there is a geodesic $\gamma$ and shifts $\theta_1, \theta_2, \theta_3, \theta_4, \mathcal{T}$ that form a ribbon$^*$, then there is $\delta>0$ such that for all $g'$ with $d_{C^0}(g',g)< \delta$ the geodesic flow of $g'$ has positive topological entropy. 
\end{corollary}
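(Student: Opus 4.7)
The proof is essentially a chaining of three already-established results, so the strategy is short and structural rather than computational. The plan is to move from the ribbon$^*$ assumption on $g$, through robustness of $\varepsilon$-ribbons under $C^0$-perturbation, to the entropy lower bound for metrics admitting a ribbon.

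First I would apply the preceding proposition: from the existence of $\gamma$ and $\theta_1,\theta_2,\theta_3,\theta_4,\mathcal{T}$ forming a ribbon$^*$ for $g$, extract some $\varepsilon>0$ and four closed $g$-geodesics $\tau_1,\tau_2,\tau_3,\tau_4$ that form an $\varepsilon$-ribbon with respect to $g$. This gives me a genuinely quantitative starting point from the (a priori non-compact, non-closed) ribbon$^*$ data.

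Next I would choose any $\delta\in(0,\varepsilon/2)$ and invoke Proposition \ref{prop1}: for every metric $g'$ with $d_{C^0}(g',g)<\delta$ there exist four closed $g'$-geodesics $\tau'_1,\tau'_2,\tau'_3,\tau'_4$ forming an $(\varepsilon-2\delta)$-ribbon with respect to $g'$. Since $\varepsilon-2\delta>0$, in particular these four geodesics satisfy conditions (0)--(2) of Definition \ref{def:F}, so they form a ribbon in the $g'$-metric.

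Finally I would apply Theorem \ref{thm1} to $(T^2,g')$ with the four geodesics $\tau'_1,\ldots,\tau'_4$, which yields
\[
h_{\topo}(\varphi_{g'}) \ \geq\ \frac{\log 2}{L'},
\]
where $L'=\min\{l_{g'}(\tau'_1)+l_{g'}(\tau'_2),\,l_{g'}(\tau'_3)+l_{g'}(\tau'_4)\}$. In particular $h_{\topo}(\varphi_{g'})>0$, which is exactly the conclusion. Since all three ingredients are already proved, there is no real obstacle here; the only point worth checking is that Proposition \ref{prop1} indeed produces a ribbon (not merely a weaker perturbed object) for every $g'$ in a $\delta$-ball, so that Theorem \ref{thm1} is applicable uniformly in $g'$, and this is immediate because $(\varepsilon-2\delta)$-ribbons are ribbons by definition.
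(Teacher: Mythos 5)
Your proposal is correct and is exactly the intended argument: the paper states this corollary without proof precisely because it follows by chaining the preceding proposition (ribbon$^*$ yields an $\varepsilon$-ribbon of closed geodesics), Proposition~\ref{prop1} (an $\varepsilon$-ribbon persists as an $(\varepsilon-2\delta)$-ribbon for any $g'$ with $d_{C^0}(g',g)<\delta<\varepsilon/2$, which in particular is a ribbon), and Theorem~\ref{thm1} (a ribbon forces $h_{\topo}(\varphi_{g'})\geq \log(2)/L'>0$), mirroring the earlier corollary deduced from Theorem~\ref{thm2}. No gaps.
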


  Note that for bumpy metrics closed minimal geodesics are hyperbolic, and note that bumpy is a $C^{\infty}$ generic condition. In case $B_-^+$ is constant on $S$, then $S$ is foliated by minimal heteroclinics from $v_-$ to $v_+$, in other words the unstable manifold of $v_-$ and the stable manifold of $v_+$ intersect, but not transversely. One can perturb in a neighborhood of any point of such a heteroclinic $h_{vol}$, as was shown by Donnay  \cite{Donnay} ($C^2$ perturbation) and Petroll  \cite{Petroll} ($C^{\infty}$ perturbation)  (for a sketch of the proof see also \cite{BurnsWeiss}),   such that $h_{vol}$ becomes a transverse heteroclinic connection from $v_-$ to $v_+$.  If such a perturbation is sufficiently small, $v_-$ and $v_+$ stay to be adjacent (minimal) geodesics, where now $B_-^+$ is non-constant. So the assumptions in Theorem \ref{thm:BolRab} hold $C^{\infty}$ generically, which assures that Theorem \ref{thm:generic} holds.

\section{Robustness by retractable neck on general manifolds}\label{sec:retractibleNeck}

Consider a closed Riemannian manifold $(M,g)$ that is not necessarily a torus. We assume that there exist nested nonempty open sets $U\subseteq V_1\subseteq V_2\subseteq W$ whose closure have smooth boundary such that \[U\subseteq \overline U\subseteq V_1\subseteq\overline V_1\subseteq V_2\subseteq\overline V_2\subseteq W\] and such that there is a retraction $\rho:M\backslash U\to M\backslash W$ that is homotopic to the identity relative $M\backslash W$ through the homotopy $\rho_t$. 
We denote the two numbers:
\begin{align*}
	d_1 &= \max_{x\in\partial V_2} l_g(\rho_t(x)),\\
	d_2 &= dist_g(\partial V_1, \partial V_2).
\end{align*}
We interpret this setup as follows: $U$ is a head, that we intend to cut off. The set $W\backslash U$ is a neck that is further divided into lower neck $W\backslash V_2$, middle neck $V_2\backslash V_1$ and upper neck $V_1\backslash U$. The numbers $d_i$ are the length of the lower neck $d_1$ and the length of the middle neck $d_2$, measured in a way that suits later proofs.

\begin{assumption}[Retractable neck and entropic body] If the following statements are true, we say that the ''neck'' $W\backslash U$ is $(c,k)$-\emph{retractable} for a number $c\in(0,1)$ and $k\geq 3$.
\begin{itemize} 
	\item The retraction $\rho$ is a contraction: for any curve $\gamma:I\to M\backslash U$ we have that $l_g(\rho\circ\gamma)\leq l_g(\gamma)$. 
	\item The retraction $\rho$ is a proper contraction in the middle and upper neck: for any curve $\gamma:I\to V_2\backslash U$ we have $l_g(\rho\circ\gamma)< c\, l_g(\gamma)$.
	\item The lower neck is substantially shorter than the middle neck: 
	\begin{align}
		\frac{d_1}{d_2} &< \frac{1-c}{k} \label{LowerToUpper}
	\end{align}
\end{itemize}	
	If the following statement is true, we say that $M$ has an \emph{entropic body}. 
\begin{itemize}
	\item There is a subset $\Pcal\subseteq \tiilde\pi_1(M\backslash U)\backslash \iota \tiilde\pi_1(\partial U)$ of the free homotopy classes of $M\backslash U$ not homotopic to curves in $\partial U$, whose elements are mutually coprime such that the subsets
	$$\Pcal_g(T)=\{\alpha\in\Pcal\mid \exists \gamma\in\alpha : l(\gamma)\leq T\}$$ 
	 grow exponentially: $\Gamma_T(\#\Pcal_g(T)) >0$.
\end{itemize}
\hfill$\triangle$
\end{assumption}


\begin{remark}
	Note that even though $\Gamma_T(\#\Pcal_g(T))$ depends on the metric, the positivity $\Gamma_T(\#\Pcal_g(T))>0$ is a purely algebraic statement about the group growth of the free homotopy classes seen as $\tiilde\pi_1=\pi_1/conj$. In particular, if $\pi_1$ has a subgroup isomorphic to $\ZZ*\ZZ$, then the assumption is satisfied.
\end{remark}

\begin{theorem}\label{thm:entropyFromNeck}
	Let the closed Riemannian manifold $(M,g_0)$ have a $(c,k)$-retractable neck and an entropic body. Then, for $C$ with $1<C<\frac k{2+(k-2)c}$, $g_0$ has $C$-robust positive topological entropy and for $g\sim_Cg_0$ we have
	$$h_{\rm top}(\varphi_g^t)\geq \frac1{\sqrt C} \Gamma(\#\Pcal_{g_0}(T)).$$ 
\end{theorem}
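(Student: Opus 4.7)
The plan is to construct, for each $\alpha \in \Pcal_{g_0}(T)$ and each $g$ with $g\sim_C g_0$, a closed $g$-geodesic $\gamma^g_\alpha \subset M\setminus V_1$ representing $\alpha$ viewed as a class in $\tiilde\pi_1(M\setminus U)$, with $l_g(\gamma^g_\alpha)\leq\sqrt{C}\,T$. Since $\gamma^g_\alpha\subset M\setminus V_1$, the pairwise coprimality of the $\alpha$'s in $\tiilde\pi_1(M\setminus U)$ transfers to coprimality of the corresponding classes in $\tiilde\pi_1(M)$, and Lemma~\ref{lem:nonhomotopicGeodesicsToEntropy}, together with a reparameterization of the length bound, then yields
\[ h_{\rm top}(\varphi^t_g) \;\geq\; \Gamma_T\bigl(\#\Pcal_{g_0}(T/\sqrt{C})\bigr) \;=\; \tfrac{1}{\sqrt{C}}\,\Gamma_T(\#\Pcal_{g_0}(T)), \]
which is the desired lower bound.

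To produce $\gamma^g_\alpha$, start with a representative $\gamma_\alpha\in\alpha$ with $l_{g_0}(\gamma_\alpha)\leq T$ and apply the retraction: $\bar\gamma_\alpha := \rho\circ\gamma_\alpha\subset M\setminus W$ is freely homotopic to $\gamma_\alpha$ in $M\setminus U$ through $\rho_t$, has $l_{g_0}(\bar\gamma_\alpha)\leq T$, and hence $l_g(\bar\gamma_\alpha)\leq\sqrt{C}\,T$. Following the strategy of the degenerate case of Theorem~\ref{thm:robustDM}, I would take $\gamma^g_\alpha$ to be the limit of the negative $E_g$-gradient flow starting from $\bar\gamma_\alpha$, confined to a trapping region $\mathcal{U}_\alpha$ inside the loop space $\mathcal{L}_\alpha(M\setminus \overline{V_1})$. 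A standard Palais--Smale argument then produces a smooth closed $g$-geodesic in $M\setminus V_1$.

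The trapping is provided by a retraction estimate, which is the heart of the argument. Any loop $\tau$ representing $\alpha$ in $M\setminus U$ whose image meets $\overline{V_1}$ has a portion $\tau|_{V_2}$ of $g_0$-length $\geq 2d_2$, since it must twice traverse the middle neck $V_2\setminus V_1$. Replacing this portion by the $\rho_t$-detour at the two $\partial V_2$-crossings (of $g_0$-cost $\leq d_1$ each) composed with $\rho\circ\tau|_{V_2}$ (of $g_0$-length $\leq c\cdot l_{g_0}(\tau|_{V_2})$) produces a freely homotopic loop $\tilde\tau\subset M\setminus W$ with
\[ l_{g_0}(\tilde\tau)\;\leq\;l_{g_0}(\tau)-2\bigl((1-c)d_2-d_1\bigr), \]
and the hypothesis $d_1/d_2<(1-c)/k$ makes the saving strictly positive. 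Converting via $l_g\leq\sqrt{C}\,l_{g_0}$ and $l_{g_0}\leq\sqrt{C}\,l_g$ translates this into a $g$-length comparison of the form $l_g(\tilde\tau)\leq C\,l_g(\tau)-2\sqrt{C}\bigl((1-c)d_2-d_1\bigr)$; the range $1<C<k/(2+(k-2)c)$ is calibrated so that, balanced against the multiplicative $C$-distortion, this produces the energy gap between loops reaching $V_1$ and loops avoiding it that is needed to confine the gradient flow to $\mathcal{U}_\alpha$.

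The principal technical obstacle is verifying the trapping estimate uniformly on the relevant sublevel set and matching the exact numerical bound $C<k/(2+(k-2)c)$: one must carefully balance the multiplicative $C$-distortion against the additive retraction saving $(1-c)d_2-d_1$, and likely refine the naive length comparison above (which only yields strict $g$-length decrease when the loop is a priori short) into an energy comparison on a carefully chosen trapping region in the loop space. A secondary subtlety is excluding loops that enter the interior of the head $U$, where $\rho$ is undefined; here the hypothesis $\alpha\notin\iota\tiilde\pi_1(\partial U)$ serves to rule out degenerations of the gradient trajectory onto $\partial U$ and thereby prevents loops from slipping into $U$.
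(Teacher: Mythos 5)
Your proposal is correct in its core mechanism and identifies exactly the key lemma of the paper: the quantitative shortening estimate showing that any loop in a class $\alpha\in\Pcal$ whose image meets $\overline{V_1}$ can be pushed off via $\rho_s$ at a net $g$-length saving of $2\sqrt{C}\bigl(\tfrac{1-c}{k}d_2-d_1\bigr)>0$, with the condition $C<\tfrac{k}{2+(k-2)c}$ calibrated precisely so that the multiplicative distortion $\tfrac{1}{C}-c$ beats the additive cost $2d_1$ of the two detours along $\rho_s$ at the $\partial V_2$-crossings (this is the paper's Lemma~\ref{minimizer}, and your formulation with crossings at $\partial V_2$ and portions traversing the middle neck twice is the right reading of it). Where you diverge is in how the closed geodesic in each class is actually produced. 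The paper simply minimizes length directly over the free homotopy class in the compact manifold-with-boundary $M\setminus U$: an Arzel\`a--Ascoli argument on a constant-speed infimizing sequence yields a length minimizer, and Lemma~\ref{minimizer} then shows this minimizer cannot meet $\overline{V_1}$ (else it would not be minimal), so it lies in the interior of $M\setminus U$ and is therefore a genuine geodesic. Your route via the negative $\mathcal{E}_g$-gradient flow started at $\rho\circ\gamma_\alpha$, a trapping region in the loop space, and Palais--Smale is workable but strictly heavier: it requires upgrading the pointwise length saving into a uniform energy gap between the class infimum and the infimum over loops reaching $\overline{V_1}$ (the very step you flag as the principal obstacle), whereas the minimization argument needs only the strict inequality for each individual curve. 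The paper's approach thus buys simplicity; yours would buy a locally minimizing geodesic with a quantified energy barrier, which is not needed here. One caution common to both arguments: the passage from coprimality in $\tilde\pi_1(M\setminus U)$ to the hypothesis of Lemma~\ref{lem:nonhomotopicGeodesicsToEntropy} is asserted rather than proved in your write-up (containment of the geodesics in $M\setminus V_1$ does not by itself make $\pi_1(M\setminus U)\to\pi_1(M)$ injective); the paper glosses over the same point, so I do not count it against you, but "transfers to coprimality in $\tilde\pi_1(M)$" is not automatic and would need the set $\Pcal$ to be chosen compatibly with the inclusion into $M$.
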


\begin{example}\label{ex:addHead}
	Let $M^n$ be a manifold such that there exists a Riemannian metric whose topological entropy vanishes. Fix $c$ and $k$. Consider a submanifold $L$ such that $M\backslash L$ is an entropic body. We identify a tubular neighborhood of $L$ with the normal disk bundle $DL$ with radial coordinate $r\in[0,3+\frac k{2+(k-2)c}]$. 
	We define the neck 
	$$U=\{r<1\},V_1=\{r<2\},V_2=\left\{r<2+\frac k{2+(k-2)c}\right\},W=\left\{r<3+\frac k{2+(k-2)c}\right\}$$ with the retraction defined by 
	$$\rho_s|_{M\backslash W}=id,\quad \rho_s(r,x)=\left(\min\left\{r+\left(2+\frac k{2+(k-2)c}\right)s,3+\frac k{2+(k-2)c}\right\},x\right).$$	
	Endow the neck with a metric $g$ such that 
	$$g|_{r\in(1,3+\frac k{2+(k-2)c})}=f(r)g_{SL}+dr^2,$$ 
	where $g_{SL}$ is a metric of the normal sphere bundle over $L$ and $f$ is a function in $r$ with $f(r)\geq f(3+\frac k{2+(k-2)c})$ for all $r\in(1,3+\frac k{2+(k-2)c})$ and $c\sqrt{f(r)}\geq \sqrt{f(3+\frac k{2+(k-2)c})}$ for all $r\in(1,2+\frac k{2+(k-2)c})$. With this metric, the neck has the $(c,k)$-retractable neck property. Thus, any extension of $g$ to $M$ satisfies the assumptions of the theorem.
	
	In dimension $2$, $L$ is discrete. For $S^2$ three points and for $T^2$ one point suffice.
	In dimension $3$, $L$ is a link. For $S^3$ the unlink and for $T^3$ the unknot suffice.
	\hfill$\triangle$
\end{example}

The following Lemma is the $C^0$-robust property that we derive from a retractable neck. 

\begin{lemma}\label{minimizer}
	Let $(M,g_0)$ have a $(c,k)$-retractable neck. Let $g$ be a metric with $g \sim_C g_0$ for some number $1 < C < \frac k{2+(k-2)c}$. Let $\alpha$ be a homotopy class of a curve in $M\backslash U$ that is not homotopic ot a curve in $\partial U$. Then, any $g$-length minimizer of $\alpha$ has image in $M\backslash V_1$. 
\end{lemma}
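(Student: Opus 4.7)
I argue by contradiction: assume a $g$-length minimizer $\gamma$ of $\alpha$ enters $V_1$, and construct a shorter representative $\gamma'$ of the same class. Since $\alpha$ is not represented by a loop in $\partial U$, $\gamma$ is nontrivial and its image lies in $M\setminus U$. After a small perturbation I may assume $\gamma$ meets $\partial V_2$ transversally. If $\gamma\subset V_2\setminus U$, then $\rho\circ\gamma$ is freely homotopic to $\gamma$ in $M\setminus U$ via $\rho_t\circ\gamma$, and the proper contraction combined with $g\sim_C g_0$ gives $l_g(\rho\circ\gamma)\leq Cc\,l_g(\gamma)$; since $C<\frac{k}{2+(k-2)c}<1/c$ (the second inequality is equivalent to $c<1$), $Cc<1$ already yields a contradiction. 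Otherwise, $\gamma^{-1}(V_2)$ decomposes $\gamma$ into subarcs $\sigma_1,\ldots,\sigma_m$ in $V_2\setminus U$ with endpoints $x_i,y_i\in\partial V_2$, and setting $I=\{i:\sigma_i\cap V_1\neq\emptyset\}$, the assumption gives $I\neq\emptyset$. Any $\sigma_i$ with $i\in I$ must cross $\partial V_1$ both entering $V_1$ and leaving it, so $l_{g_0}(\sigma_i)\geq 2d_2$.

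\textbf{Local replacement.} For each $i\in I$ I define the replacement arc
\[\sigma_i'\;=\;\rho_\bullet(x_i)\cdot(\rho\circ\sigma_i)\cdot\overline{\rho_\bullet(y_i)},\]
where $\rho_\bullet(x_i)$ denotes the path $s\mapsto\rho_s(x_i)$, whose $g_0$-length is at most $d_1$. The square map $(s,t)\mapsto\rho_s(\sigma_i(t))$ supplies a homotopy from $\sigma_i$ to $\sigma_i'$ rel endpoints with values in $M\setminus U$; simultaneously replacing each $\sigma_i$, $i\in I$, by $\sigma_i'$ produces a loop $\gamma'$ freely homotopic to $\gamma$ in $M\setminus U$, hence still representing $\alpha$. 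Combining the $d_1$-bound with the proper contraction on $V_2\setminus U$ yields
\[l_{g_0}(\sigma_i')\;\leq\; c\,l_{g_0}(\sigma_i)+2d_1.\]

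\textbf{Quantitative estimate.} Using $g\sim_C g_0$, the inequality $l_g(\sigma_i')<l_g(\sigma_i)$ will follow from $\sqrt{C}\bigl(c\,l_{g_0}(\sigma_i)+2d_1\bigr)<\frac{1}{\sqrt{C}}l_{g_0}(\sigma_i)$, i.e.\ $l_{g_0}(\sigma_i)>\frac{2Cd_1}{1-Cc}$ (with $1-Cc>0$ as noted). Since $l_{g_0}(\sigma_i)\geq 2d_2$, it suffices that $\frac{d_1}{d_2}<\frac{1-Cc}{C}$. A short rearrangement shows that $\frac{1-c}{k}\leq\frac{1-Cc}{C}$ is equivalent to $C(1+(k-1)c)\leq k$, and the hypothesis $C<\frac{k}{2+(k-2)c}$ supplies this strictly because $c<1$ gives $1+(k-1)c<2+(k-2)c$. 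Combined with the hypothesis $\frac{d_1}{d_2}<\frac{1-c}{k}$, this yields $l_g(\sigma_i')<l_g(\sigma_i)$ for every $i\in I$; summing the strict improvements gives $l_g(\gamma')<l_g(\gamma)$, contradicting the minimality of $\gamma$. The main technical care lies in the transversality/perturbation setup and in verifying the arithmetic matching of the hypotheses on $c,k$ with the allowed range of $C$; the geometric replacement itself is an immediate consequence of the retractable neck structure.
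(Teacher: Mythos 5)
Your proof is correct and follows essentially the same strategy as the paper's: replace the arc(s) of $\gamma$ inside the neck that reach $V_1$ by their $\rho$-images joined to the original endpoints via the retraction paths, and use the $(c,k)$-hypotheses together with $g\sim_C g_0$ to show the replacement is strictly shorter, contradicting minimality. Your write-up is in fact a bit more careful where the paper's is loose: you anchor the arc endpoints on $\partial V_2$ (which is where the bound by $d_1$ and the estimate $l_{g_0}(\sigma_i)\geq 2d_2$ actually apply; the paper's text says $\partial V_1$), you treat the case $\gamma\subseteq V_2\setminus U$ separately, and your arithmetic, using $2d_2$ rather than the paper's $d_2$, verifies the required inequality with room to spare.
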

\begin{proof}

	Let $\gamma\in\alpha$. Assume $\exists T: \gamma(T)\in V_1\backslash U$. We claim that $\gamma$ is not a length minimizer of $\alpha$. We prove this by explicitly constructing a shorter curve homotopic to $\gamma$.
	
	There is a maximal connected neighborhood $I\subseteq S^1$ of $T$ such that $\gamma(I)\subseteq V_1\backslash U$. Since $\gamma \notin \iota\pi_1\partial U$, the interval $I$ is not the entire circle. Because of maximality of $I=[t_1,t_2]$ the end points lie in the boundary $\gamma(t_1),\gamma(t_2)\in\partial V_1$. This implies that that $l_{g_0}(\gamma|_I)\geq 2 d_2$ by definition of $d_2$.
	
	We define the homotopy $\gamma_s(t)$, $s\in[0,1]$ as the concatenation 
	$$\gamma_s(t)= \gamma|_{S^1\backslash I}\circ \rho|_{[0,s]}(\gamma(t_1)) \circ \rho_s(\gamma|_I) \circ \overline{\rho|_{[0,s]}}(\gamma(t_2)).$$
	Obviously $\gamma_0 \sim \gamma_1$. It is elementary to verify that the condition $C < \frac k{2+(k-2)c}$ implies that $\frac{1-cC}{2C}>\frac{1-c}k$. We compute
	
	\begin{align*}
		l_g(\gamma_0)-l_g(\gamma_1) &= l_g(\gamma|_I) - l_g(\rho\circ\gamma|_I) - l_g(\rho_s(\gamma(t_1))) - l_g(\overline{\rho_s}(\gamma(t_2)))\\
		&\geq \frac1{\sqrt{C}} l_{g_0}(\gamma|_I) - {\sqrt{C}} l_{g_0}(\rho\circ\gamma|_I) - {\sqrt{C}} l_{g_0}(\rho_s(\gamma(t_1))) - {\sqrt{C}} l_{g_0}(\overline{\rho_s}(\gamma(t_2)))\\
		&> {\sqrt{C}}\left( \frac1{C} l_{g_0}(\gamma|_I) - c l_{g_0}(\gamma|_I) - l_{g_0}(\rho_s(\gamma(t_1))) - l_{g_0}(\overline{\rho_s}(\gamma(t_2))) \right) \\
		&\geq {\sqrt{C}}\left( \left(\frac1{C}-c\right) d_2 - 2d_1 \right)  =  {\sqrt{C}} 2 \left( \left(\frac{1-cC}{2C}\right) d_2 - d_1 \right)\\
		&> 2 {\sqrt{C}} \left(\frac {1-c}k d_2 -d_1\right)\\
		&> 0.
	\end{align*}
	We conclude that $\gamma=\gamma_0$ is not a length minimizer.
\end{proof}

\begin{proof}[Proof of Theorem~\ref{thm:entropyFromNeck}]
	Let $g\sim_C g_0$ and $\alpha\in\Pcal$. Since $\alpha$ is non-contractable, the infimal length of the homotopy class is positive $l(\alpha):=\inf\{l(\gamma)\mid \gamma\in\alpha\}> 0$. Let $\gamma_k:S^1\to M\backslash U$ be a sequence of smooth loops parametrized by constant speed with $l(\gamma_k)\to l(\alpha)$.  Since $|\dot\gamma|\to l(\alpha)$ and since $M\backslash U$ is compact, we can apply Arzela--Ascoli and find a subsequence that converges to a curve $\gamma_{\alpha,g}$ which satisfies $l(\gamma_{\alpha,g})\leq l(\alpha)$ by lower semi-continuity of the length functional. By minimality of $l(\alpha)$ this implies $l(\gamma_{\alpha,g})=l(\alpha)$. Thus, $\gamma_{\alpha,g}$ is a length minimizer. Lemma~\ref{minimizer} tells us that the image of a length minimizer is contained in $M\backslash V_1$, which is in the interior of $M\backslash U$. We conclude that $\gamma_{\alpha,g}$ is a geodesic.
	
	Thus, for every $g\sim_C g_0$ and $\alpha\in\Pcal$ there is a length minimising geodesic $\gamma_{\alpha,g}:l_g(\alpha)S^1\to M\backslash U$, which we parametrize from now on by arc length for convenience. Note that $\gamma_{\alpha,g}$ lifts to a periodic orbit $(\gamma_{\alpha,g},\dot\gamma_{\alpha,g})$ of $\varphi_g^t$ of period $l_g(\alpha)$.
	
	The relation $l_g\leq \sqrt Cl_{g_0}$ implies that $\frac 1{\sqrt C} l_g(\alpha)\leq l_{g_0}(\alpha)$. Thus, 
	$$\{\alpha\in\Pcal\mid l_{g_0}(\alpha)\leq T\}\subseteq \{\alpha\in\Pcal\mid l_g(\alpha)\leq \sqrt{C}T\}$$
	and consequently the sets $\tiilde\Pcal_{g}(T)=\{\gamma_{\alpha,g}\mid l_g(\gamma_{\alpha,g})<T\}$ satisfy
	$$\Gamma(\#\tiilde\Pcal_{g}(T))\geq \frac1{\sqrt{C}} \Gamma(\#\Pcal_{g_0}(T)).$$
	
The desired statement now follows from Lemma~\ref{lem:nonhomotopicGeodesicsToEntropy}.
\end{proof}

\begin{proof}[Proof of Theorem~\ref{thm:EntropyDenseAndBig}]

Let $(Q,g)$ be the $k\geq 2$ dimensional Riemannian manifold and let $e>0$ be arbitrary. We search metrics $g(s)\in\mathfrak{G}^e(Q)$ with $d_{C^0}(g,g(s))<s$ that has a $(c(s),k(s))$-retractable neck, where $\lim_{s\to 0}(c(s),k(s))=(1,3)$ and $\lim_{s\to \infty}(c(s),k(s))=(0,\infty)$. Theorem~\ref{thm:entropyFromNeck} then implies the statement qualitatively. The formula in Theorem~\ref{thm:EntropyDenseAndBig} comes from the specific construction.

We briefly outline the argument: We prepare a small disk in which all geodesics considered will be contained. Then, we construct some heads inside such that the homotopy classes of curves in the disk minus the heads have positive algebraic growth. The growth of homotopy classes filtered by length will be at least the algebraic growth divided by length of longest generator. Then, we scale the entire construction down inside the disk, leaving the algebraic growth invariant but reducing the length of longest generator. This way, we find arbitrarily large entropy. This construction can be done by a $C^0$-small perturbation of the metric that is parametric in $s$.

{\bf Step 1: Choice and manipulation of a small disk}
For an arbitrary point $p$, we choose a nearby metric $g_1=g_1(s)$ that is slightly reshaped around $p$: We flatten a small disk surrounded by a thin cylindrical annulus. 
To quantify small, we choose $0<\epsilon_1$ and $0<\epsilon_2\ll\epsilon_1$ in dependence of $s$: $\epsilon_1(s)$ is a continuous function that is linear for small $s$ and constant $\ll1$ for $s>s_0$ for some $s_0\ll 1$. 

We choose then $g_1$ such that 
\begin{itemize}
    \item $g_1\equiv g_0$ on $Q\backslash B_{4\epsilon_1}(p)$,
    \item $g_1 = f(r)g_{S^{k-1}}+dr^2$ on $B_{3\epsilon_1}(p)$, where $r$ is a radial coordinate,
    \item $f(r)\equiv 4$ on the annulus $r\in(2\epsilon_1-\epsilon_2,2\epsilon_1+\epsilon_2)$,
    \item $f(r)=r^2$  on $B_{\epsilon_1}(p)$,\\
    \item $d_{C^0}(g_0,g_1)< s/2$,
\end{itemize}
where $g_{S^{k-1}}$ is the round metric on the euclidean sphere and where the radii of balls are measured with respect to $g_1$. 

Note that choosing $g_1$ is very easy starting from coordinates that are orthonormal on $T_pQ$ since we allow $C^0$-small perturbations. It would be impossible for $C^2$-small perturbations, as curvature would be an obstruction. The deformation around the annulus is a small deviation from the flat metric as long as $\epsilon_2$ is small in comparison to $\epsilon_1$. This is a manifestation of the fact that all changes of $f$ by quantities that are small with respect to $f$ are small.

The condition on the annulus is to ensure that the disk $B_{\epsilon_1}(p)$ is surrounded by a totally geodesic codimension 1 sphere, which will help to contain minimizing curves within the disk in its interior.

{\bf Step 2: Choice of heads}
Let $\iota:L\hookrightarrow B_{\epsilon_1}(0)$ be an embedded codimension $2$ submanifold: If $k=2$, then we choose $L$ to be three points, if $k>2$ then let $L= L_1\cup L_2$ have two unknotted components with $L_i\cong S^1\times S^{k-3}$. Note that in both cases the group growth $\Gamma(\pi_1(B_{\epsilon_1}(p)\backslash L,p))=:\Gamma$ is positive. There is a length $\lambda(L)$ such that there are generators of $\pi_1(B_{\epsilon_1}(p)\backslash L,p)$ of length at most $\lambda(\iota L)$. To demonstrate the future argument, denote by $\Pcal_{g_1}(\iota,T)$ the set of free homotopy classes of loops in $B_{\epsilon_1}(p)\backslash \iota L$ that are represented by a loop of length $\leq T$ and that do neither retract onto $\iota L$ nor to $\partial B_{\epsilon_1}(p)$. Then we have
\[\Gamma_T(\#\Pcal_{g_1}(\iota,T)) \geq \Gamma / \lambda\]
as for each free loop we find a (longer) representing based loop and the conjugacy classes of the fundamental group grow as fast as the fundamental group.
If we postcompose the embedding $\iota$ with a dilation by a factor $t$, the algebraic growth $\Gamma$ will obviously not change but the group will be generated by loops of length $\lambda(t\iota L)=t\lambda(\iota L)$, implying that $\Gamma_T(\#\Pcal_{g_1}(t\iota,T))\to\infty$ as $t\to 0$. 

{\bf Step 3: Shaping the necks}
The shape of the necks is determined similar to the shape of the base disk. We choose in dependence of $s$ the shape parameters of the neck $(c(s),k(s))$ such that $c(s)<e^{-s/8}$ and $k=3+s$. Further, we choose new and even smaller $0<\epsilon_3$ and $0<k\epsilon_4\ll\epsilon_3$. We choose a nearby metric $g_2=g_2(s)$ that is flattened in an $\epsilon_3$-tube around $L$, except for an $\epsilon_4$ wide annulus which imitates Example~\ref{ex:addHead}. More precisely,
\begin{itemize}
    \item $g_2\equiv g_1$ outside the tubular neighborhood $V_{3\epsilon_3}(N)$,\\
    \item $g_2\equiv f(r)g_{SL}+dr^2$ inside $V_{2\epsilon_3}(N)$,\\
    \item $f(r)\geq f(\epsilon_3+(3+\frac k{2+(k-2)c})\epsilon_4)$ for all $r\in(\epsilon_3+\epsilon_4,\epsilon_3+(3+\frac 3{1-c})\epsilon_4)$,\\
    \item $c\sqrt{f(r)}\geq \sqrt{f(\epsilon_3+(3+\frac k{2+(k-2)c})\epsilon_4)}$ for all $r\in(\epsilon_3+\epsilon_4,\epsilon_3+(2+\frac k{2+(k-2)c})\epsilon_4)$,
    \item $d_{C^0}(g_2,g_1)<s/2$,
\end{itemize}
where tubular neighborhoods are taken with respect to the metric $g_2$ and where $g_{SL}$ is the metric of normal sphere bundle over $L$.
Note that if $f(r)$ would equal $r^2$, then the metric would be flat and the assumption would be similar to the choice of the small disk. 

Note that the crucial feature of our choice of $c(s)$ is that $\log(1/c^2)<s/2$ because the fourth point forces us to deviate by a factor $c$ from the cylindrical metric, which forces $d_{C^0}(g_2,g_1)>\log(1/c^2)$ and the fith point demands $d_{C^0}(g_2,g_1)<s/2$. 

This new metric has a retractable neck with sets 
\[U=\{r<\epsilon_3+\epsilon_4\},V_1=\{r<\epsilon_3+2\epsilon_4\},\]\[V_2=\left\{r<\epsilon_3+(2+\frac k{2+(k-2)c})\epsilon_4\right\},W=\left\{r<\epsilon_3+(3+\frac k{2+(k-2)c})\epsilon_4\right\}.\]

The inclusion $B_{\epsilon_1}(p)\backslash U \hookrightarrow B_{\epsilon_1}(p)\backslash L$ is obviously a homotopy equivalence. The generators of the fundamental group are possibly a bit larger, but $2\lambda(\iota L)$ suffices if $\epsilon_3$ is small enough.

{\bf Step 4: Shrinking for growth}
Now, we employ the dilation by $t$ mentioned in Step 2 for the metrics $g_2(s)$ from Step 3. To be more formal, denote by $\delta_{t}$ the dilation by $t$ in the flat model around $p$ and by $g_t(s)$ the metric which coincides with $g_2(s)$ on $Q\backslash \delta_s(B_{2\epsilon_1-\epsilon_2})$ and with $t^2\delta_{1/t}^*g_2$ in a small neighborhood. Note that $t^2\delta_{1/t}^*g_{\rm Euc}=g_{\rm Euc}$ and that the scaling leaves ratios intact, so $d_{C^0}(g_t(s),g_1)=d_{C^0}(g_2(s),g_1)$ and in total \[d_{C^0}(g_t(s),g_0)<d_{C^0}(g_t(s),g_1(s))+d_{C^0}(g_1(s),g_0)<s.\]

Let $\rho$ be a free homotopy class of loops in the disk with the scaled heads $U_s=\delta_s U$ removed $B_{2\epsilon_1}(p)\backslash U_s$, which neither retracts to $\partial U$ nor to $\partial B_{2\epsilon_1}$. Choosing a length infimizing sequence, we find by Arzela--Ascoli up to choice of subsequence a limit loop $\gamma$ for $\rho$. This minimizer cannot touch $\partial U$ by construction of a retractable neck. Nor can it touch $\partial B_{2\epsilon_1}$ as otherwise it would be tangent to a geodesic in the geodesic foliation of $\partial B_{2\epsilon_1}$ that comes from the cylindric metric on the annulus $r\in(2\epsilon_1-\epsilon_2,2\epsilon_1+\epsilon_2)$ and thus be a geodesic belonging to that foliation, contradicting our assumption on its homotopy class. Thus, each class in $\Pcal_{g_1}(si,T)$ from Step 2 is represented by a geodesic.

As noted in Step 2, the fundamental group $\pi_1(B_{\epsilon_1}(p)\backslash U_s,p)$ is generated by loops of length $<2t\lambda(\iota L)$. Thus, we may choose for each $s$ a $t$ so small that $\Gamma/(2t\lambda(\iota L))>e$, where $e$ is the exponential growth required in the statement of the theorem. To describe the necessary choice in dependence of $s$, note that by Theorem~\ref{thm:entropyFromNeck} for our choices $c(s)<e^{-s/8}$ and $k=3+s$ we obtain the bound that for \[C<\frac{s+3}{2+(s+1)e^{-s/8}}\]
we can expect for $d_{C^0}(g_t(s),g)<C$ that $h_{\rm top}(\varphi^t_g)\geq \frac1{\sqrt C}\Gamma(\# \Pcal_{g_t(s)}(T))$. Thus, in order to enforce $h_{\rm top}(\varphi^t_g)\geq e$ within the $\frac{s+3}{2+(s+1)e^{-s/8}}$-balls, we must have a dilation by at least
\[t\sim \mbox{const} \sqrt{\frac{2+(s+1)e^{-s/8}}{s+3}},\]
where the constant is in dependence of $\Gamma$ and $\lambda$ for a specific value. This gives us the required growth of minimizing geodesics which concludes the argument. 
\end{proof}

\begin{proof}[Proof of Corollary~\ref{cor:EntropyDenseAndBig}]
    The first two points in the corollary are immediately clear. For the third point, we start with the quasi-isometric embedding  $\Phi_n:(\R^n, |\cdot|_{\infty}) \to (\overline{\mathfrak{G}}(T^2), d_{\rm RBM})$ from~\cite{JunVukasin}, where the volume is fixed as 1 and the diameter bound is 100. Note that if $\Phi_n$ is quasi-isometric and $\tiilde\Phi_n$ is $d_{C^0}$-close to $\Phi_n$, then also $\tiilde\Phi_n$ is quasi-isometric. So the statement is proved by parametrically performing the above construction. Note that for this only the first step needs to be done parametrically, as from then on the construction is on the small disk which is flat for any starting metric. It is also sufficient to choose one constant but small $s$ and a corresponding constant parameter $t$. As volume and diameter are $C^0$-continuous, the perturbed metrics have volume $1$ after a rescaling close to a factor 1 and the diameter still admits the bound of 101 as stated in our corollary. 
\end{proof}

\appendix

\section{Robustness of non-degenerate length spectrum}\label{appendix:Spectrum}

Here, we prove Proposition~\ref{stabl}. The aim is to use only a low amount of technology.

\begin{remark}Unfortunately, one cannot say anything about the position of the geodesic that is found by the theorem.
\flushright$\triangle$ \end{remark}

Before we start the proof, let us fix a setup: Denote by $\Omega=H^1(S^1,M)$ the Hilbert manifold\footnote{We use this setting to avoid working in the Fr\'echet manifold $C^\infty(S^1,M)$. However, by bootstrapping every geodesic ends up being smooth.} of closed loops in $M$. The non-constant critical points of the energy functional $\Ecal_g:\gamma\mapsto \frac12\int_0^1 g(\dot\gamma,\dot\gamma)\;dt$ are exactly the closed geodesics. The negative gradient flow $\varphi_g^t$ of $\Ecal_g$ has the Palais--Smale property in this space. Denote the sublevel set $\Omega^a_g:=\{\gamma\in\Omega\mid \Ecal_g(\gamma)\leq a\}.$

That $\gamma$ is non-constant and non-degenerate means that the connected component of $\Crit\Ecal_g$ containing $\gamma$ is a circle and Morse--Bott. If all geodesics are non-degenerate, then the energy spectrum is discrete. The following statement describes what happens topologically at a critical energy level.

\begin{proposition}[\cite{B54}, see also\cite{O14}]\label{prop:cycle}
Assume that $c\in(a,b)$ is the only critical value in $[a,b]$. Denote $N_1,\ldots,N_r$ the components of $\Crit(\Ecal)$ with $\Ecal(N_i)=c$ and with indices $\lambda_1,\ldots,\lambda_r$. Assume they are Morse--Bott. Then 
\begin{itemize}
	\item Each manifold $N_i$ carries a well defined vector bundle $\nu^-N_i\subset T\Omega|_{N_i}$ of rank $\lambda_i$ consisting of negative directions of $d^2 \Ecal_g$.
	\item The sublevel set $\Omega^b_g$ retracts onto a space homeomorphic to $\Omega^a_g$ with the disc bundles $D\nu^-N_i$ disjointly attached to $\Omega^a_g$ along their boundaries.
	\item The retraction $r:\Omega^b_g\to\Omega^a_g\bigcup_{\partial D\nu^-N_i} D\nu^-N_i$ can be chosen such that $\Ecal_g\leq \Ecal_g\circ r$ and such that $r|_{N_i}=id$ and $r|_{\Omega^a_g}=id$.
\end{itemize}
\end{proposition}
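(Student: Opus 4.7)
The statement is a Hilbert–manifold version of the classical Morse–Bott handle attachment, so the proof would proceed along three standard stages.

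\textbf{Stage 1: construction of $\nu^- N_i$.} Fix a point $\gamma \in N_i$. With respect to the $H^1$ inner product on $T_\gamma \Omega = H^1(S^1, \gamma^\ast TM)$, the Hessian $D^2 \Ecal_g(\gamma)$ has the form $\id + K_\gamma$ with $K_\gamma$ compact, so its spectrum is discrete outside a neighborhood of $1$. By the Morse--Bott assumption $\ker D^2 \Ecal_g(\gamma) = T_\gamma N_i$, and by definition of the index the negative spectrum consists of finitely many eigenvalues with total multiplicity $\lambda_i$. Compactness of $N_i$ gives a $\delta > 0$ independent of $\gamma$ that separates the negative spectrum from the rest. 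The spectral projector $P^-_\gamma = \frac{1}{2\pi i} \oint_\Gamma (z - D^2\Ecal_g(\gamma))^{-1}\, dz$, taken along a contour enclosing exactly the negative eigenvalues, then depends smoothly on $\gamma$, and its image is the desired smooth rank-$\lambda_i$ subbundle $\nu^- N_i \subset T\Omega|_{N_i}$.

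\textbf{Stage 2: Morse--Bott normal form.} Invoke the Hilbert manifold Morse--Bott lemma (Palais; see e.g.\ Chang, \emph{Infinite Dimensional Morse Theory}, Ch.~I) to obtain, for each $i$, a tubular neighborhood $U_i$ of $N_i$ and a diffeomorphism $\Phi_i$ from a neighborhood of the zero section in $\nu^- N_i \oplus \nu^+ N_i$ to $U_i$, with $\Phi_i(n,0,0) = n$ and
\[
\Ecal_g \circ \Phi_i(n, v^-, v^+) \;=\; c - \|v^-\|^2 + \|v^+\|^2.
\]
Shrink $U_i$ so that the $U_i$ are pairwise disjoint and $U_i \subset \Ecal_g^{-1}((a,b))$, and choose a disc radius $\rho_i$ with $\rho_i^2 < c-a$ so that $D\nu^- N_i := \{\|v^-\| \leq \rho_i\}$ embeds inside $U_i$; the boundary $\partial D\nu^- N_i$ then lies on a level set of $\Ecal_g$ strictly between $a$ and $c$. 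Using the negative gradient flow I would slide $\partial D\nu^- N_i$ along the $v^+$ direction into $\Omega^a_g$, so that after identification the discs are honestly attached to $\Omega^a_g$ along their boundaries.

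\textbf{Stage 3: the retraction.} Outside $\bigcup U_i$, Palais--Smale plus the fact that $c$ is the only critical value in $[a,b]$ yields a lower bound on $\|\nabla \Ecal_g\|$ on the slab $\Ecal_g^{-1}([a, b]) \setminus \bigcup U_i^{1/2}$, where $U_i^{1/2} \Subset U_i$. Hence the negative gradient flow $\varphi^t_g$ drives each point of $\Omega^b_g \setminus \bigcup U_i^{1/2}$ either into $\Omega^a_g$ or into some $U_i^{1/2}$ in a uniformly bounded time. Inside $U_i$, I would define $r$ explicitly in the normal form coordinates by first flowing $v^+ \to 0$ (which strictly decreases $\Ecal_g$) and then, if $\|v^-\| > \rho_i$, continuing along a flow on which $\Ecal_g$ decreases further until reaching either the disc $\{v^+ = 0, \|v^-\| = \rho_i\}$ (now glued to $\Omega^a_g$) or $\Omega^a_g$ itself. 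A smooth cutoff supported in the collar $U_i \setminus U_i^{1/2}$ interpolates between the outer gradient flow and the inner explicit retraction. The resulting continuous map $r$ is the identity on $\Omega^a_g$ and on each $N_i$ (where $v^\pm = 0$), takes values in $\Omega^a_g \cup \bigsqcup_i D\nu^- N_i$, and satisfies the stated monotonicity with $\Ecal_g$ by construction.

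\textbf{Main obstacle.} The delicate point is Stage~2: proving the Morse--Bott normal form in infinite dimensions requires an implicit-function/Moser-type argument that uses the splitting $T\Omega|_{N_i} = TN_i \oplus \nu^- N_i \oplus \nu^+ N_i$ together with the self-adjointness of the Hessian, and one has to check that the trivializing diffeomorphism depends continuously (in fact smoothly) on $n \in N_i$. Once this is in hand, Stage~3 reduces to a careful but routine patching argument; the only subtlety there is the continuity of $r$ across the corner $\partial D\nu^- N_i \cap \Omega^a_g$, which is handled by choosing the collar cutoff so that the two recipes agree on the overlap.
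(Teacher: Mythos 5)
The paper does not actually prove this proposition: it is imported as a black box from Bott \cite{B54} (see also the survey \cite{O14}), so there is no in-paper argument to compare yours against. Your outline is the standard proof of that classical result in the Hilbert-manifold setting --- the negative bundle $\nu^-N_i$ via spectral projection of the Hessian (identity plus compact in the $H^1$ metric, hence finite index and a uniform spectral gap over the compact $N_i$), the parametrized Gromoll--Meyer/Morse--Bott normal form, and a retraction patched from the negative gradient flow outside tubular neighbourhoods and the explicit model retraction inside --- which is exactly the route of the cited sources; as a sketch it is sound, and you correctly identify the infinite-dimensional Morse--Bott lemma as the one genuinely technical ingredient (it is available here because $\Ecal_g$ is smooth on $H^1(S^1,M)$ with Fredholm Hessian and Palais--Smale holds). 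One caveat: your construction yields $\Ecal_g\circ r\le \Ecal_g$, since energy does not increase along a negative-gradient retraction, whereas the proposition as printed asserts $\Ecal_g\le \Ecal_g\circ r$. The printed inequality appears to be a typo in the paper: the minimax argument in Appendix~\ref{appendix:Spectrum} uses $\max \Ecal_g\circ u\ \ge\ \max \Ecal_g\circ r\circ u$, i.e.\ precisely the direction your construction provides. So rather than claiming ``the stated monotonicity by construction,'' you should state explicitly the inequality $\Ecal_g\circ r\le\Ecal_g$ that your retraction satisfies; with that emendation your plan matches the intended (and later used) statement.
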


\begin{remark}
	This proposition gives inductive instructions to build a CW-complex homotopy equivalent to $\Omega$. The building blocks are disk bundles, which are cell complexes. The retraction maps inductively provide the attaching maps. 
\end{remark}

Since we are only interested in the topology, we use the term \emph{topologically non-degenerate} for a curve where the conclusions of Proposition~\ref{prop:cycle} hold.

\begin{definition}\label{def:topologicalNonDegeneracy}
	We assume that $c\neq 0$ is the only critical value in $(a,b)$. Denote $N_i$ the components of $\Crit(\Ecal)$ and assume that they are all isolated circles representing reparametrizations of non-constant closed geodesic $\gamma_i$ with energy $c$.
	
	Then we call $\gamma_i$ \emph{topologically non-degenerate} if there are vector bundles $\nu^-N_i\subseteq T\Omega|_N$ such that the sublevel set $\Omega_g^b$ retracts onto a space homeomorphic to $\Omega_g^a$ with the disc bundles $D\nu^-N_i$ attached to $\Omega_g^a$ along the boundary via a retraction $r$ with $\Ecal_g\leq \Ecal_g\circ r$ and such that $r|_{N_i}=id$ and $r|_{\Omega_g^a}=id$. 	\flushright$\triangle$
\end{definition}

\begin{remark}\label{rem:localization}
    The assumption that the spectral value is isolated is actually too strong for our purpose; It would suffice to demand in Therem~\ref{thm:robustDM} that a topologically non-degenerate $\gamma$ be isolated in the space of loops. The proof below would then work by localizing the gradient flow. One would do this by multiplying the gradient vector field with a bump function around a neighborhood of $N_i$ that is flow-invariant in the intended energy interval, and that separates $\gamma$ from other geodesics. The argument would become much more complicated as the resulting flows only locally transport the respective sub-level sets into each other.
\end{remark}

	We intend to use a minimax principle. We use the following formulation from Klingenberg~\cite{K78}. A \emph{flow-family} $\Acal$ for $\Ecal_g$ is a collection of subsets of $\Omega$ such that $\Ecal_g|_{A}$ is bounded for all $A\in \Acal$ and such that $A\in\Acal$ implies $\varphi_g^t A\in\Acal$ for $t\geq 0$.
\begin{proposition}\label{minimax}
	Let $\Acal$ be a flow-family for $\Ecal_g^t$.	Then 
	$$\inf_{A\in\Acal}\sup_{A} \Ecal_g$$
	is a critical value of $\Ecal_g$.
\end{proposition}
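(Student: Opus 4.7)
The plan is to run the standard minimax/deformation argument: assume $c := \inf_{A\in\Acal}\sup_A \Ecal_g$ is not a critical value and derive a contradiction with the infimum property by pushing some $A\in\Acal$ down through the level $c$ via the negative gradient flow.

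First, I would establish a quantitative deformation statement from the Palais--Smale property. Suppose $c$ is regular. Applying the Palais--Smale condition on the strip $\{c-\epsilon_0 \le \Ecal_g \le c+\epsilon_0\}$, I would obtain $\epsilon_1 \in (0,\epsilon_0]$ and $\delta>0$ such that $\|d\Ecal_g(\gamma)\| \ge \delta$ for all $\gamma$ with $|\Ecal_g(\gamma) - c| \le \epsilon_1$; otherwise one extracts a Palais--Smale sequence whose limit would be a critical point at level $c$. Along the negative gradient flow,
\begin{equation*}
\tfrac{d}{dt}\Ecal_g(\varphi_g^t(\gamma)) = -\|d\Ecal_g(\varphi_g^t(\gamma))\|^2 \le -\delta^2
\end{equation*}
as long as $\varphi_g^t(\gamma)$ stays in the strip. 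Choosing any $\epsilon \in (0,\epsilon_1)$ and $T := 2\epsilon/\delta^2$, this yields the inclusion $\varphi_g^T(\Omega_g^{c+\epsilon}) \subseteq \Omega_g^{c-\epsilon}$: any trajectory starting at energy $\le c+\epsilon$ either leaves the strip through the lower boundary (good) or is forced below $c-\epsilon$ within time $T$.

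Second, I would combine this deformation with the flow-family property. By definition of infimum, pick $A \in \Acal$ with $\sup_A \Ecal_g < c+\epsilon$, so $A \subseteq \Omega_g^{c+\epsilon}$. The hypothesis that $\Acal$ is a flow-family gives $\varphi_g^T(A) \in \Acal$, and the previous paragraph forces
\begin{equation*}
\sup_{\varphi_g^T(A)} \Ecal_g \le c - \epsilon.
\end{equation*}
Hence $\inf_{A'\in\Acal} \sup_{A'} \Ecal_g \le c - \epsilon < c$, contradicting the definition of $c$.

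The only nontrivial ingredient is the quantitative gradient bound from Palais--Smale and the verification that the negative gradient flow exists for time $T$ starting on $A$. For the former one uses the standard contrapositive of the Palais--Smale condition. For the latter, note that $\Ecal_g$ is bounded on $A$ by the flow-family hypothesis and decreases along $\varphi_g^t$, so orbits stay in a sublevel set and hence in a set where completeness of the flow on the Hilbert manifold $\Omega$ is guaranteed. I expect the main (and only) subtle point to be making sure the deformation is done on the full relevant sublevel set rather than on an individual orbit; this is handled automatically by the uniform bound $\|d\Ecal_g\|\ge \delta$ on the whole strip.
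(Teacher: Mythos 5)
Your proof is correct: it is the standard quantitative deformation argument (uniform gradient bound on a strip from Palais--Smale, flow for time $T=2\epsilon/\delta^2$ to push $\Omega_g^{c+\epsilon}$ into $\Omega_g^{c-\epsilon}$, then contradict the infimum using the flow-invariance of $\Acal$), and you correctly flag the only real technical points, namely the contrapositive of Palais--Smale and completeness of the negative gradient flow on sublevel sets. The paper itself gives no proof of this proposition --- it is quoted from Klingenberg --- so your argument is exactly the expected one and there is nothing to compare beyond noting that it matches the classical treatment.
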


\begin{proof}[Proof of Proposition~\ref{stabl}]
We use Proposition~\ref{prop:cycle} to define a suitable flow-family. For simplicity, assume that there is only one critical component. For Proposition~\ref{stabl} it is enough to consider the case $N_1=N\iso S^1$. The fundamental class of the transverse bundle relative its boundary $[D\nu^-N;\partial D\nu^-N]$ has nonempty intersection with the core $N$ since it has nonempty intersection with any interior point. By extension the same is true for the class $\omega:=[\Omega^a_g\bigcup_{\partial D\nu^-N}D\nu^-N;\Omega^a_g]$. Denote by $r^*\omega$ the set of maps $u:(D\nu^-N;\partial D\nu^-N)\to (\Omega^b_g,\Omega^a_g)$ such that $[r\circ u]=\omega$. Then, the set of images of $u\in r^*\omega$ defines a flow-family.

The minimax value for $r^*\omega$ is the critical value $c$: 
$$\inf_{u\in r^*\omega}\max \Ecal\circ u \geq \inf_{u\in r^*\omega} \max\Ecal\circ r\circ u \geq \Ecal(N)=c.$$
The other inequality is trivial since $\Ecal_g$ restricted to the unstable disk bundle of $N$ has maximum~$c$.

The robustness statement now follows by using the very same retraction $r$ to define a flow family for the perturbed metric $\tiilde g$: Let $\varepsilon>0$ be so small that $c$ is the only critical value in $[(1-3\varepsilon )c,(1+3\varepsilon)c]$. Let $\tiilde g$ be a metric such that $\|v\|^2_{\tiilde g}\in(1-\frac12\varepsilon,1+\frac12\varepsilon)\|v\|^2_{g}$ for all $v$. 
Note that for such $\varepsilon$ small enough the following chain of inclusions holds
$$\Omega^{(1-2\varepsilon)c}_{\tiilde g}\subseteq\Omega^{(1-\varepsilon)c}_{g}\subseteq\Omega^c_g\subseteq\Omega^{(1+\varepsilon)c}_{\tiilde g}\subseteq\Omega^{(1+2\varepsilon)c}_{g}.$$ 
Let $r:\Omega^{(1+2\varepsilon)c}_g\to\Omega^{(1-\varepsilon)c}_g\bigcup_{\partial D\nu^-N} D\nu^-N$ be the retraction constructed with $\varphi_g^t$ and $r^*\omega$ the class described above. Define the subclass $\tiilde\omega\subset r^*\omega$ by restriction of the target space $u:(D\nu^-N;\partial D\nu^-N)\to (\Omega^{(1+\varepsilon)c}_{\tiilde g},\Omega^{(1-2\varepsilon)c}_{\tiilde g})$. The set of images of representatives of $\tiilde \omega$ is a flow-family for $\varphi_{\tiilde g}^t$ since it is defined through sublevelsets of $\Ecal_{\tiilde g}$, and it is nonempty since it contains the $\varphi_g^t$-unstable disk bundle around $N$.
We have 
$$\inf_{u\in\tiilde\omega}\max \Ecal_{\tiilde g}\circ u \geq \inf\max_{u\in\tiilde\omega} (1-\varepsilon)\Ecal_{g} \circ u \geq (1-\varepsilon)c.$$
On the other hand for $u$ the $\varphi^t_g$-unstable disk bundle around $N$ we have $\max \Ecal_{\tiilde g}\circ u\leq (1+\varepsilon)c$. Thus, the minimax principle produces some geodesic $\tiilde \gamma$ of $\Ecal_{\tiilde g}$ with energy $|\Ecal_{\tiilde g}(\tiilde \gamma)-c|\leq \varepsilon c$. 

Note that for any $u$ in the flow-family every path in the image of $u$ is homotopic to a loop in $N$ since the intersection of $u$ and $N$ is nonempty. Thus, also $\tiilde\gamma$ is homotopic to the unperturbed geodesic.
\end{proof}

\bibliographystyle{alpha}

\bibliography{biblio}

\begin{thebibliography}{KKW91}

\bibitem[AASS]{AASS}
Alberto Abbondandolo, Marcelo R.~R. Alves, Murat Saglam, and Felix Schlenk.
\newblock Entropy collapse versus entropy rigidity for {R}eeb and {F}insler
  flows.
\newblock Preprint arXiv:2103.01144 2021.

\bibitem[Alv16]{Alves1}
Marcelo R.~R. Alves.
\newblock Cylindrical contact homology and topological entropy.
\newblock {\em Geom. Topol.}, 20(6):3519--3569, 2016.

\bibitem[Ang90]{Angenent1}
Sigurd Angenent.
\newblock Parabolic equations for curves on surfaces. {I}. {C}urves with
  {$p$}-integrable curvature.
\newblock {\em Ann. of Math. (2)}, 132(3):451--483, 1990.

\bibitem[Ang91]{Angenent2}
Sigurd Angenent.
\newblock Parabolic equations for curves on surfaces. {II}. {I}ntersections,
  blow-up and generalized solutions.
\newblock {\em Ann. of Math. (2)}, 133(1):171--215, 1991.

\bibitem[Ang05]{Angenent3}
Sigurd Angenent.
\newblock Curve shortening and the topology of closed geodesics on surfaces.
\newblock {\em Ann. of Math. (2)}, 162(3):1187--1241, 2005.

\bibitem[AP21]{AP}
Marcelo R.~R. Alves and Abror Pirnapasov.
\newblock Reeb orbits that force topological entropy.
\newblock {\em Ergodic Theory and Dynamical Systems}, page 1–44, 2021.

\bibitem[BM20]{balacheffmerlin}
Florent Balacheff and Louis Merlin.
\newblock A curvature-free log$(2k-1)$ theorem.
\newblock {\em Proceedings of the American Mathematical Society}, Sep 2020.

\bibitem[Bot54]{B54}
Raoul Bott.
\newblock Nondegenerate critical manifolds.
\newblock {\em Ann. of Math. (2)}, 60:248--261, 1954.

\bibitem[BR02]{BolotinRabinowitz}
Sergey~V. Bolotin and Paul~H. Rabinowitz.
\newblock Some geometrical conditions for the existence of chaotic geodesics on
  a torus.
\newblock {\em Ergodic Theory Dynam. Systems}, 22(5):1407--1428, 2002.

\bibitem[Bus92]{buserspectra}
Peter Buser.
\newblock {\em Geometry and spectra of compact {R}iemann surfaces}, volume 106
  of {\em Progress in Mathematics}.
\newblock Birkh\"auser Boston Inc., Boston, MA, 1992.

\bibitem[BW02]{BurnsWeiss}
Keith Burns and Howard Weiss.
\newblock Spheres with positive curvature and nearly dense orbits for the
  geodesic flow.
\newblock {\em Ergodic Theory Dynam. Systems}, 22(2):329--348, 2002.

\bibitem[CM21]{ChorMeiwes}
Arnon Chor and Matthias Meiwes.
\newblock Hofer’s geometry and topological entropy.
\newblock Preprint, 2021.

\bibitem[Con92]{contreras}
Gonzalo Contreras.
\newblock Regularity of topological and metric entropy of hyperbolic flows.
\newblock {\em Math. Z.}, 210(1):97--111, 1992.

\bibitem[Dah21]{Dahinden}
Lucas Dahinden.
\newblock ${C}^0$-stability of topological entropy for contactomorphisms.
\newblock {\em Comm.\ Cont.\ Math.}, https://doi.org/10.1142/S0219199721500152,
  2021.

\bibitem[DM98]{denvirmackay}
James Denvir and Robert~S. MacKay.
\newblock Consequences of contractible geodesics on surfaces.
\newblock {\em Trans. Amer. Math. Soc.}, 350(11):4553--4568, 1998.

\bibitem[Don95]{Donnay}
Victor~J. Donnay.
\newblock Transverse homoclinic connections for geodesic flows.
\newblock In {\em Hamiltonian dynamical systems ({C}incinnati, {OH}, 1992)},
  volume~63 of {\em IMA Vol. Math. Appl.}, pages 115--125. Springer, New York,
  1995.

\bibitem[GK11]{GlasmachersKnieper}
Eva Glasmachers and Gerhard Knieper.
\newblock Minimal geodesic foliation on {$T^2$} in case of vanishing
  topological entropy.
\newblock {\em J. Topol. Anal.}, 3(4):511--520, 2011.

\bibitem[Gra89]{Grayson}
Matthew~A. Grayson.
\newblock Shortening embedded curves.
\newblock {\em Ann. of Math. (2)}, 129(1):71--111, 1989.

\bibitem[Har64]{Hartman}
Philip Hartman.
\newblock Geodesic parallel coordinates in the large.
\newblock {\em Amer. J. Math.}, 86:705--727, 1964.

\bibitem[Hed32]{Hedlund}
Gustav~A. Hedlund.
\newblock Geodesics on a two-dimensional {R}iemannian manifold with periodic
  coefficients.
\newblock {\em Ann. of Math. (2)}, 33(4):719--739, 1932.

\bibitem[Jos08]{Jost}
J\"{u}rgen Jost.
\newblock {\em Riemannian geometry and geometric analysis}.
\newblock Universitext. Springer-Verlag, Berlin, fifth edition, 2008.

\bibitem[KKW91]{katokknieperweiss}
Anatole Katok, Gerhard Knieper, and Howard Weiss.
\newblock Formulas for the derivative and critical points of topological
  entropy for {A}nosov and geodesic flows.
\newblock {\em Comm. Math. Phys.}, 138(1):19--31, 1991.

\bibitem[Kli78]{K78}
Wilhelm Klingenberg.
\newblock {\em Lectures on closed geodesics}.
\newblock Grundlehren der Mathematischen Wissenschaften, Vol. 230.
  Springer-Verlag, Berlin-New York, 1978.

\bibitem[Man79]{manning}
Anthony Manning.
\newblock Topological entropy for geodesic flows.
\newblock {\em Ann. of Math. (2)}, 110(3):567--573, 1979.

\bibitem[Mat95]{mattilafractals}
Pertti Mattila.
\newblock {\em Geometry of sets and measures in {E}uclidean spaces}, volume~44
  of {\em Cambridge Studies in Advanced Mathematics}.
\newblock Cambridge University Press, Cambridge, 1995.
\newblock Fractals and rectifiability.

\bibitem[Mei18]{Meiwesthesis}
Matthias Meiwes.
\newblock {\em Rabinowitz Floer homology, leafwise intersections, and
  topological entropy}.
\newblock PhD thesis, 2018.

\bibitem[Mil02]{Milnor}
John Milnor.
\newblock Is entropy effectively computable?
\newblock https://www.math.iupui.edu/\~mmisiure/open/JM1.pdf, 2002.

\bibitem[Mn97]{Mane}
Ricardo Ma\~{n}\'{e}.
\newblock On the topological entropy of geodesic flows.
\newblock {\em J. Differential Geom.}, 45(1):74--93, 1997.

\bibitem[Mor24]{Morse24}
Harold~Marston Morse.
\newblock A fundamental class of geodesics on any closed surface of genus
  greater than one.
\newblock {\em Trans. Amer. Math. Soc.}, 26(1):25--60, 1924.

\bibitem[New89]{N89}
Sheldon~E. Newhouse.
\newblock Continuity properties of entropy.
\newblock {\em Ann. of Math. (2)}, 129(2):215--235, 1989.

\bibitem[Oan15]{O14}
Alexandru Oancea.
\newblock Morse theory, closed geodesics, and the homology of free loop spaces.
\newblock In {\em Free loop spaces in geometry and topology}, volume~24 of {\em
  IRMA Lect. Math. Theor. Phys.}, pages 67--109. Eur. Math. Soc., Z\"{u}rich,
  2015.
\newblock With an appendix by Umberto Hryniewicz.

\bibitem[Pat92]{Paternain}
Gabriel~P. Paternain.
\newblock On the topology of manifolds with completely integrable geodesic
  flows.
\newblock {\em Ergodic Theory Dynam. Systems}, 12(1):109--121, 1992.

\bibitem[Pet96]{Petroll}
Dietmar Petroll.
\newblock Existenz und {T}ransversalit\"at von homoklinen und heteroklinen
  {O}rbits beim geod\"atischen {F}luss.
\newblock {\em Ph. D. Thesis}, Universitat Freiburg, 1996.

\bibitem[SST03]{ShiohamaShioyaTanaka}
Katsuhiro Shiohama, Takashi Shioya, and Minoru Tanaka.
\newblock {\em The geometry of total curvature on complete open surfaces},
  volume 159 of {\em Cambridge Tracts in Mathematics}.
\newblock Cambridge University Press, Cambridge, 2003.

\bibitem[ST21]{schapiratapie}
Barbara Schapira and Samuel Tapie.
\newblock Regularity of entropy, geodesic currents and entropy at infinity.
\newblock {\em Ann. Sci. \'{E}c. Norm. Sup\'{e}r. (4)}, 54(1):1--68, 2021.

\bibitem[Sul79]{sullivan}
Dennis Sullivan.
\newblock The density at infinity of a discrete group of hyperbolic motions.
\newblock {\em Inst. Hautes \'Etudes Sci. Publ. Math.}, (50):171--202, 1979.

\bibitem[SZ19]{JunVukasin}
Vukašin Stojisavljević and Jun Zhang.
\newblock Persistence modules, symplectic banach-mazur distance and riemannian
  metrics.
\newblock {\em arXiv}, 1810.11151, 2019.

\bibitem[Ush20]{Usher}
Michael Usher.
\newblock Symplectic {B}anach-{M}azur distances between subsets in
  {$\mathbb{C}^n$}.
\newblock {\em J. Topol. Anal.}, pages 1--56, 2020.

\end{thebibliography}

\end{document}